\numberwithin{equation}{section}
\newcommand{\R}{{\mathbb R}}
\newcommand{\be}{\begin{equation}}
\newcommand{\ee}{\end{equation}}
\newcommand{\ben}{\begin{eqnarray*}}
\newcommand{\enn}{\end{eqnarray*}}
\newcommand{\va}{\varepsilon}
\newcommand{\beq}{\begin{equation}}
\newcommand{\eeq}{\end{equation}}
\newcommand{\bea} {\begin{array}{rl}}
\newcommand{\eea} {\end{array}}
\newcommand{\bepa}{\left\{ \begin{array}{l}}
\newcommand{\eepa} {\end{array}\right.}
\newcommand{\Rmnum}[1]{\expandafter\@slowromancap\romannumeral #1@}
\newcommand{\D}{\displaystyle}
\newcommand{\norm}[1]{\left\lVert#1\right\rVert}
\newtheorem{theorem}{\textbf Theorem}[section]
\newtheorem{lemma}{\textbf Lemma}[section]
\newtheorem{rem}{\textbf Remark}[section]
\newtheorem{corollary}[theorem]{Corollary}
\newtheorem{proposition}[theorem]{Proposition}
\def\p{\partial}
\def\pa{\partial}
\renewcommand{\theequation}{\arabic{section}.\arabic{equation}}
\begin{document}

\author{Benoit Perthame}
\address{Sorbonne Universit\'{e}, Universit\'{e} Paris-Diderot, CNRS, INRIA, Laboratoire Jacques-Louis Lions, F-75005 Paris, France}
\email{ Benoit.Perthame@upmc.fr}

\author{Nicolas Vauchelet}
\address{Universit\'e Paris 13, Sorbonne Paris Cit\'e, CNRS UMR 7539, Laboratoire Analyse G\'eom\'etrie et Applications, 93430 Villetaneuse, France}
\email{vauchelet@math.univ-paris13.fr}

\author{Zhian Wang}
\address{Department of Applied Mathematics, Hong Kong Polytechnic University,
Hung Hom, Kowloon, Hong Kong}
\email{mawza@polyu.edu.hk}
\title
[Flux limited Keller-Segel system and kinetic equations]
{The flux limited Keller-Segel system; \\ properties and derivation from kinetic equations}

\begin{abstract}
The flux limited Keller-Segel (FLKS) system is a macroscopic model describing bacteria motion by chemotaxis which takes into account saturation of the velocity. The hyperbolic form and some special parabolic forms have been derived from kinetic equations describing the run and tumble process for bacterial motion. The FLKS model  also has the advantage that traveling pulse solutions exist as observed experimentally. It has attracted the attention of many authors recently.

We  design and prove a general derivation of the FLKS  departing from a kinetic model  under stiffness assumption of the chemotactic response and rescaling the kinetic equation according to this stiffness parameter.
Unlike the classical Keller-Segel system, solutions of the FLKS system do not blow-up in finite or infinite time. Then we investigate the existence of radially symmetric steady state and long time behaviour of this flux limited Keller-Segel system.
\end{abstract}

\subjclass[2000]{35A01, 35B40, 35B44, 35K57, 35Q92, 92C17}

\keywords{flux limited Keller-Segel system, chemotaxis, drift-diffusion equation, asymptotic analysis, long time asymptotics}

\date{}
\maketitle

\section{Introduction}

Chemotaxis, the directed movement of an organism in response to a chemical stimulus, is a fundamental cellular process in many important biological processes such as embryonic development \cite{LiMuneoka}, wound healing \cite{Pettet96},  blood vessel formation \cite{ChapLogas, Gamba03}, pattern formation
\cite{Berg91, PainterFish}) and so on.  Well-known examples of biological species experiencing chemotaxis include the slime mold amoebae {\it Dictyostelium discoideum}, the flagellated bacteria {\it Escherichia coli} and {\it Salmonella typhimurium}, and the human endothelial cells \cite{Murray1}. Mathematical models of chemotaxis were derived from either microscopic (individual) or macroscopic (population) perspectives, which have been widely studied in the past four decades. The macroscopic chemotaxis model has been first developed by Keller-Segel in \cite{KS1} to describe the aggregation of cellular slime molds {\it Dictyostelium discoideum} and in \cite{KS3} to describe the wave propagation of bacterial chemotaxis.
Because thresholds on the cell number decide when solutions will undergo smooth dispersion or blow-up in finite time, and because of the interest of related functional analysis, this system has attracted an enormous number of studies (cf. \cite{Perthamebook}).

In this paper, we are interested in the flux-limited Keller-Segel (FLKS) system in the whole space $\R^d$.
Some particular form of such system has already been introduced in \cite{HP09,Chertock}. It describes the evolution of cell density $\rho(t,x)$ and chemical signal concentration $S(t,x)$ at $x\in \R^d$ and time $t>0$, and is based on the physical assumption that the chemotactic flux function is bounded, modeling velocity saturation in large gradient environment. It reads
\begin{equation}\label{ksf}
\begin{cases}
\p_t\rho= D \Delta\rho-{\rm div}(\rho \phi(|\nabla S|)\nabla S), \quad x\in \R^d,\ t>0,
\\[2pt]
\tau \p_t S - \Delta S+ \alpha S = \rho,
\\[2pt]
\rho(0,x)=\rho^0(x) \geq 0, \qquad  \hbox{and } \qquad
S(0,x)=S^0(x) \; \mathrm{if} \ \tau=1.
\end{cases}
\end{equation}
We denote the cell total number $M:=\int_{\R^d} \rho^0(x)\,dx > 0$.
This system is conservative, that is
$$
M=\int_{\R^d} \rho(t,x)\,dx, \qquad \forall t\geq 0.
$$
Compared to the classical Keller-Segel system, the chemotactic response function $\phi\in C^1(\R^+; \R^+)$ depends nonlinearly on the chemical concentration gradient.
We assume flux limitation, that means there is a positive constant $A_\infty$ such that
\beq\label{asF}
\max\limits_{r \in \R^+} \phi(r)= \phi(0), \qquad \max\limits_{r \in \R^+}|r \phi(r)|=A_\infty.
\eeq
These boundedness assumptions on the flux induce that solutions to \eqref{ksf} exist globally in time (see e.g. \cite{HPS,Chertock}), unlike the Keller-Segel system for which finite time blow-up may occur.

The motivation to study the FLKS system \eqref{ksf} comes from its derivation from mesoscopic kinetic model.
The first microscopic/mesoscopic description of chemotaxis model is due to Patlak \cite{patlak} whereby the kinetic
theory was used to express the chemotactic velocity in term of the
average of velocities and run times of individual cells. This approach was essentially boosted by Alt \cite{ALT80} and
developed by Othmer, Dunber and Alt \cite{ODA88} using a
velocity-jump processes which assumes that cells run with some velocity
and at random instants of time they changes velocities (directions)
according to a Poisson process. The advantage of kinetic models over macroscopic models is that details of the run-and-tumble motion at individual scales can be explicitly incorporated into the tumbling  kernel and then passed to macroscopic quantities through bottom-up scaling (cf. \cite{hiloth, Xue1, Xue2, ErbanOth1, ErbanOth2, STY, PTV, DS}), where the rigorous justification of upscaling limits have been studied in many works (see \cite{CMPS, HKS1, HKS3, JV, Liao} and reference therein).
Denoting by $f(t,x,v)$ the cell number density, at time $t$, position $x\in {\mathbb R}^d$ moving with a velocity $v\in V$ (compact set of $\mathbb{R}^d$ with rotational symmetry), the governing evolution equation of this process is described by a kinetic equation reading as:
\begin{eqnarray}\label{kin}
\frac{\partial f}{\partial t} +v\cdot \nabla_x f= \int_{V} \big(T[S](v, v')f(t,x,v')-T[S](v',v)f(t,x,v) \big)dv',
\end{eqnarray}
The tumbling kernel $T[S](v,v')$ describes the frequency of changing  trajectories from velocity $v'$ (anterior) to $v$ (posterior) depending on the chemical concentration $S$ or its gradient. Because cells are able to compare present chemical concentration to previous ones and thus to respond to temporal gradients along their pathways, the tumbling kernel may depend on the pathway (directional derivative)  and takes the form (\cite{DS, PTV})
\begin{equation}\label{T1}
T[S](v,v')=\lambda_0+\sigma \Psi(D_tS), \qquad D_t S=\p_tS+v'\nabla S,
\end{equation}
where $\lambda_0$ denotes a basal meaning tumbling frequency, $\sigma$ accounts for the variation of tumble frequency modulation and $\Psi$ denotes the signal response (sensing) function which is decreasing to express that cells are less likely to tumble when the chemical concentration increases.

The first goal of the present paper is to derive the FLKS system (\ref{ksf}) as the parabolic limit of the kinetic equation (\ref{kin})-(\ref{T1}) and relate the flux limiting function $\phi$ to $\Psi$. In particular, we introduce a new rescaling, related to the stiffness of  signal response, which has been shown to be important to describe the traveling pulses of bacterial chemotaxis observed in the experiment \cite{SCB-PNAS, SCB-PLOS,PLoS2species} and is related to instabilities both of the FLKS system and the kinetic equation~\cite{PY, CPY}. In particular, we wish to go further than the case proposed in \cite{SCB-PLOS}, when the  response function $\Psi$ is bi-valuated step (stiff) function:
$\Psi(Y)=-\mbox{sign} (Y)$,
where the parabolic limit equation of \eqref{kin} is
$$
 {\p_t} \rho= \Delta \rho -{\rm div} (\rho u[S]),\qquad u[S]= J(S_t, |\nabla S|)\frac{\nabla S}{|\nabla S|}.
$$
with $J$ denoting a macroscopic quantity depending on $\nabla S$ and/or $S_t$ (cf. \cite{DS,SCB-PLOS}).
Our method of proof is based on the method of moments and on compactness estimates to treat the nonlinearity.

Our second goal is to study the long time behaviour of solutions to the FLKS system \eqref{ksf} and the existence of stationary radial solutions.
Contrary to the Keller-Segel system for which finite time blow-up of weak solutions is observed, solutions to \eqref{ksf} under assumption \eqref{asF} exist globally in time.
For a study of the long time convergence towards radially symmetric solutions for the Keller-Segel system, which may be computed explicitely, we refer to \cite{CalCar}.
Yet, we do not have an explicit expression of radially symmetric solutions for system \eqref{ksf}.
Then, we prove that when the degradation coefficient is positive ($\alpha>0$), diffusion takes the advantage over attraction.
On the contrary, when the degradation coefficient is disregarded ($\alpha=0$), the total mass of the system, denoted $M$, appears to be an important parameter.
Indeed, when $\alpha=0$, we observe a threshold phenomenon, with a critical mass $M^* = \frac{8\pi}{\phi(0)}$ in dimension $d=2$, for the existence of radial stationary solution.

More precisely, our main results may be summarized as follows :
\begin{itemize}
\item {\it Radial stationary solutions when $\alpha=0$}. (Theorem \ref{th:radial}) \\
For $d>2$, there are no positive radially symmetric steady state solutions to (\ref{ksf}) with finite mass $M>0$. \\
For $d=2$, system (\ref{ksf}) has positive radially symmetric steady state if and only if $M > M^* = \frac{8 \pi}{\phi(0)}$.
\item {\it Long time behaviour in one dimension when $\alpha=0$}. (Corollary \ref{thm1d}) \\
For $d=1$ and $\alpha=0$, for any $M>0$, there exists a unique stationary solution $\bar{\rho}$.
Moreover, denoted by $\rho$ is the solution of the dynamical system \eqref{ksf} with $\tau=0$ and $\alpha=0$. We have
$$
\lim_{t\to+\infty} W_2(\rho(t),\bar{\rho}) = 0,
$$
where $W_2$ denotes the Wasserstein distance of order $2$.

\item {\it Long time behaviour.} (Theorem \ref{thm}) \\
In dimension $d=2$ or $d=3$.
Let $(\rho, S)$ be a solution of (\ref{ksf}) on $\R^d\times[0, \infty)$.
If $\alpha>0$ and $\tau\in\{0,1\}$, or if $\alpha=0$, $\tau=0$ and $M>0$ is small enough, then we have for any $p\in (1,+\infty]$,
\begin{equation*}
\norm{\rho(t)}_{L^p(\mathbb{R}^d)} \leq C t^{-\frac{d}{2}(1-\frac{1}{p})},
\end{equation*}
where $C$ is a nonnegative constant. Notice that this estimate on the time decay is the same as the one for the heat equation.
\end{itemize}

The situation in bounded domain is quite different. Indeed, existence of steady state solutions for $\alpha>0$ on bounded domain with Neumann boundary conditions has been investigated in \cite{Chertock} in one dimension. Based on a bifurcation analysis, they observe spiky solutions when the chemotactic sensibility is large.
See also \cite{LNT,Ni,SWW,Wang} for spiky steady states in chemotaxis models.

The outline of the paper is as follows. In the next section, we derive the flux-limited Keller-Segel model \eqref{ksf} from the kinetic system with the appropriate scaling.
Section \ref{sec:steady} deals with the existence of radially symmetric stationary states in dimension greater than $2$.
The one dimensional case is investigated in section \ref{sec:1D}.
The study of the long time behaviour is performed in section~\ref{sec:asymp} where Theorem~\ref{thm} is proved.
Then we summarize briefly our results in a conclusion and provide open questions related to this work. Finally, an appendix is devoted to some technical lemma useful throughout the paper.

\section{Derivation of FLKS  from kinetic model}

Our approach uses  the stiffness parameter $\va$ and a smoothed stiff response function $\Psi_\va(Y)=\Psi(\frac{Y}{\va})$.
In other words, we consider the following smooth stiff tumbling kernel
$$
T[S](v,v')=\lambda_0+\sigma \Psi(D_tS/\va).
$$
A possible example, as suggested in \cite{SCB-PNAS}, is $\Psi(\frac{Y}{\va})=-\tanh(\frac{Y}{\va})$. Other examples include, for instance, $\Psi(\frac{Y}{\va})=-\frac{Y}{\sqrt{\va^2+Y^2}}$. The case $\va=0$ corresponds to a stepwise stiff response function mentioned in the introduction.
In \cite{SCB-PNAS}, it has been measured that $\frac{\sigma}{\varepsilon}\approx 12$. For convenience, we write $\sigma=\chi\varepsilon$ for some scaling constant $\chi (\approx 12)>0$ and rewrite above tumbling kernel as
\begin{equation}\label{tk}
T[S](v,v')=\lambda_0+\chi \va \Psi(D_tS/\va).
\end{equation}
In this paper, we shall take $\va$ as a scaling parameter and derive the parabolic limit of kinetic models of chemotaxis which turns out to be the FLKS model  (\ref{ksf}) as long as the response function $\Psi$ is bounded.

\subsection{Rescaling of the kinetic equation}

We summarize the condition on the response function $\Psi$ as follows:
\begin{equation} \label{q1}
\Psi \in C^{1}\cap L^\infty(\R), \qquad \Psi'(z) <0 \quad  \forall z \in \R.
\end{equation}
Applying the parabolic scaling $t'=\va^2t, x'=\va x$ into \eqref{kin} with the tumbling kernel \eqref{tk}, and recovering $(t',x')$ by $(t,x)$ for convenience, we get
\begin{equation}\label{k1}
\begin{aligned}
\va^2\frac{\p}{\p t}f_{\va}(t,x,v)+\va v \cdot \nabla_x f_{\va}(t,x,v)= \mathscr{L}_{\va}[S_{\va}](f_{\va})\\
\end{aligned}
\end{equation}
with
$$
\mathscr{L}_{\va}[S](f)=\int_{V} \big(T_{\va}[S](v, v') f'-T_{\va}[S](v',v)f \big)dv', \qquad f':= f(t,x,v'), \; f:= f(t,x,v) ,
$$
and
\begin{align}\label{k3}
T_{\va}[S](v,v')=\lambda_0+\chi \va \Psi_{\va}[S](v,v'), \ \ \Psi_{\va}[S](v',v)=\Psi(\va\p_t S+ v\cdot\nabla_x S).
\end{align}
Since the chemical production and degradation are much slower than the movement (cf. \cite{SCB-PNAS,PLoS2species}), we assume prior to the microscopic scaling that the equation for $S$ is given by
\ben\label{S0}
\tau \p_t S=\Delta S+\varepsilon^2(\rho-\alpha S),
\enn
where $\rho(t,x) = \int_V f(t,x,v) dv$, $\tau=\{0,1\}$ and $\alpha\geq 0$ is a constant denoting chemical decay rate.

After rescaling, we may state the complete problem we are interested in. On one hand,  the equation for the chemical concentration with the parabolic scaling reads as
\be\label{S1}
\begin{cases}
\tau \p_t S_\va - \Delta S_\va+ \alpha S_\va = \rho_\va, \qquad \rho_\va (t,x)=\int_V f_\va (t,x,v) dv ,
\\[5pt]
S_\va(0,x)=S^0(x) \in L_+^1\cap L^\infty(\R^d) \quad \mathrm{for} \ \tau = 1.
\end{cases}
\ee
On the other hand, substituting (\ref{k3}) into (\ref{k1}), we get the final form of the kinetic equation
\begin{equation}\label{k4}
\begin{cases}
\va^2 \D\frac{\p}{\p t}f_{\va}(t,x,v)+\va v\cdot \nabla_xf_{\va}(t,x,v)
=\lambda_0\D\int_{V}(f'_{\va}-f_{\va})dv'+\chi \va\int_{V} (\Psi^*_\va[S_\va] f'_{\va}-\Psi_\va[S_\va]f_{\va})dv',
\\[8pt]
f_{\va}(0,x,v) = f^0(x,v)\in L_+^1\cap L^\infty ( \R^d \times V),
\\[5pt]
\Psi_\va[S_\va]=\Psi(\va\p_t S_\va+ v\cdot\nabla_x S_\va),\ \Psi^*_\va[S_\va]=\Psi(\va\p_t S_\va+ v'\cdot\nabla_x S_\va).
\end{cases}
\end{equation}

\subsection{Well-posedness,  a priori estimates and compactness}
\label{sec:compactness}

The well-posedness and macroscopic limits of kinetic models of chemotaxis where the tumbling kernel depends on the chemical concentration or its spatial derivative have been extensively studied e.g. in \cite{ODA88, othhil,CMPS, HKS1,BCGP} either formally or rigorously, based on the advanced functional analytical tools available for kinetic equations. When the tumbling kernel depends on the pathway derivative $D_tS$, the formal limits have been studied in \cite{othhil,DS} and rigorous justification was given in \cite{ENV} for the two species case. The well-posedness of equations (\ref{k1})-(\ref{S1}) and the limit as $\varepsilon\to 0$ are the direct consequence of the results of \cite{ENV}. For completeness, we present, without proof, the following result
\begin{theorem}[Existence, a priori estimates]
Let $\varepsilon>0$ and assume (\ref{q1}). There exists a unique global solution of (\ref{S1})-(\ref{k4}),
$f_\va \in L^\infty_{\rm loc}([0, \infty); L_+^1 \cap  L^{\infty}(\R^d\times V))$, $ S_\va \in L^\infty_{\rm loc}([0, \infty); L^\infty(\R^d))$. Moreover, there is a constant $C(\lambda_0, \| \Psi\|_\infty)$, independent of $\va$, such that
\begin{equation}\label{est1}
e^{-Ct} \int_{\R^d \times V} f_\va(t)^2 dx dv + \frac{\lambda_0 }{4\va^2} \int_0^t \int_{\R^d \times V\times V} | f_\va'-f_\va |^2 dv dv' dx ds \leq \| f^0 \|^2_2 ,
\end{equation}
\begin{equation}\label{est2}
\| \rho_\va \|_{L^2([0,T] \times \R^d)} + \| J_{\va}  \|_{L^2([0,T] \times \R^d)}  \leq C(T) e^{CT}, \qquad J_{\va}:= \frac{1}{\va} \int_V v f_\va  dv.
\end{equation}
\label{th:apriori}
\end{theorem}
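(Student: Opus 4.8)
The plan is to follow the classical kinetic-to-parabolic program adapted to this pathway-dependent tumbling kernel, essentially reproducing (with the necessary bookkeeping in $\va$) the estimates behind a standard $L^2$-energy method. First I would multiply the kinetic equation \eqref{k4} by $f_\va$ and integrate over $\R^d\times V$. The transport term $\va v\cdot\nabla_x f_\va$ integrates to zero, and the leading gain term $\lambda_0\int_V(f_\va'-f_\va)\,dv'$ produces, after symmetrizing in $(v,v')$, the dissipation $-\tfrac{\lambda_0}{2|V|}\int_{V\times V}|f_\va'-f_\va|^2\,dv\,dv'$ (up to the normalization of $V$), which is the source of the second term on the left of \eqref{est1}. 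The remaining term involving $\chi\va(\Psi^*_\va f_\va'-\Psi_\va f_\va)$ is the genuinely new one: one bounds it using $\|\Psi\|_\infty$ and a Cauchy--Schwarz estimate, writing the product $f_\va(\Psi^*_\va f_\va'-\Psi_\va f_\va)$ in terms of the symmetrized difference $f_\va'-f_\va$ plus a harmless $\Psi$-weighted square of $f_\va$. This yields, schematically,
\[
\frac{d}{dt}\int_{\R^d\times V} f_\va^2\,dx\,dv + \frac{\lambda_0}{2\va^2}\int_{\R^d\times V\times V}|f_\va'-f_\va|^2\,dv\,dv'\,dx \le C(\lambda_0,\|\Psi\|_\infty)\int_{\R^d\times V} f_\va^2\,dx\,dv,
\]
so that Grönwall's lemma gives \eqref{est1}; here the crucial point is that after multiplying the raw tumbling term by $\tfrac{1}{\va^2}$ (dividing the equation by $\va^2$) the $\chi\va$ term contributes only an $O(1)$ coefficient, which is exactly why $C$ is independent of $\va$.

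Second, for \eqref{est2} I would derive the macroscopic moments. Integrating \eqref{k4} in $v$ gives the local conservation law $\va^2\p_t\rho_\va + \va\,\mathrm{div}_x(\int_V v f_\va\,dv)=0$, i.e.\ $\va\p_t\rho_\va + \mathrm{div}_x J_\va=0$ with $J_\va=\tfrac1\va\int_V v f_\va\,dv$; multiplying by $v$ and integrating gives the flux equation relating $J_\va$ to $\rho_\va$, $\nabla_x\rho_\va$ (through the pressure tensor $\tfrac1{\va^2}\int_V v\otimes v f_\va$) and the $\Psi_\va$-moments. The $L^2$ bound on $\rho_\va$ on $[0,T]\times\R^d$ comes directly from integrating \eqref{est1} in time (since $\rho_\va=\int_V f_\va\,dv$ and $|V|<\infty$, $\|\rho_\va\|_{L^2_x}^2\le |V|\,\|f_\va\|_{L^2_{x,v}}^2$, and the right side of \eqref{est1} is $\|f^0\|_2^2 e^{Ct}$). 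For the $L^2$ bound on $J_\va$ one writes $J_\va=\tfrac1\va\int_V v f_\va\,dv=\tfrac1\va\int_V v(f_\va-\langle f_\va\rangle)\,dv$ exploiting that $\int_V v\,dv=0$ by the rotational symmetry of $V$, and then $\langle f_\va\rangle$ can be replaced, via the collision operator, by an average that controls $\tfrac1\va(f_\va-\langle f_\va\rangle)$ in $L^2$ by the dissipation term $\tfrac{1}{\va^2}\int|f_\va'-f_\va|^2$ appearing in \eqref{est1}. Combining gives $\|J_\va\|_{L^2([0,T]\times\R^d)}\le C(T)e^{CT}$.

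The main obstacle is not the energy estimate itself — that is routine once the $\va$-powers are tracked — but ensuring the constants are genuinely $\va$-independent and that the flux $J_\va$ is controlled \emph{uniformly}: the delicate point is that $J_\va$ involves a factor $1/\va$, so one must show the $O(1/\va)$ in the definition is compensated by the $O(\va)$ smallness of the microscopic fluctuation $f_\va-\langle f_\va\rangle$, which is exactly what the $\tfrac{1}{\va^2}$-weighted dissipation in \eqref{est1} provides. A secondary technical issue is that $\p_t S_\va$ enters $\Psi_\va$, so a priori $\Psi_\va$ involves a time derivative of $S_\va$; but since $\Psi$ is bounded this never hurts the $L^2$ energy estimate (only $\|\Psi\|_\infty$ enters), which is why the statement only needs $\Psi\in L^\infty$ at this stage and defers the use of $\Psi\in C^1$ and $\Psi'<0$ to the passage to the limit. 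Since the excerpt explicitly says this theorem is quoted "without proof" from \cite{ENV}, I would in practice simply cite that reference for the well-posedness part and only sketch the a priori bounds \eqref{est1}--\eqref{est2} as above.
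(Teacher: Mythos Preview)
Your proposal is correct and, as you yourself observe, the paper does not actually prove this theorem: it is explicitly ``presented without proof'' as a consequence of \cite{ENV}. There is therefore nothing in the paper to compare against; your sketch is the standard $L^2$-energy argument and is what one would expect to find in the cited reference.

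One small caveat on your handling of the $\chi$-term. The decomposition you describe---``the symmetrized difference $f_\va'-f_\va$ plus a harmless $\Psi$-weighted square of $f_\va$''---is a little ambiguous. If one naively writes $\Psi^*_\va f_\va'-\Psi_\va f_\va=(\Psi^*_\va-\Psi_\va)f_\va+\Psi^*_\va(f_\va'-f_\va)$ and multiplies by $f_\va/\va$, the first piece produces $\tfrac{\chi}{\va}\int(\Psi^*_\va-\Psi_\va)f_\va^2$, which is \emph{not} harmless: $\Psi^*_\va-\Psi_\va$ is $O(1)$, so this term is $O(1/\va)$ and would destroy the $\va$-independence of $C$. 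The clean route is to first symmetrize the full integral in $(v,v')$, which gives
\[
\frac{\chi}{\va}\int f_\va(\Psi^*_\va f_\va'-\Psi_\va f_\va)\,dv\,dv'\,dx
=-\frac{\chi}{2\va}\int(f_\va'-f_\va)(\Psi^*_\va f_\va'-\Psi_\va f_\va)\,dv\,dv'\,dx,
\]
and only then apply $\|\Psi\|_\infty$ and Young's inequality to absorb the result into the $\tfrac{\lambda_0}{4\va^2}\int|f_\va'-f_\va|^2$ dissipation and a genuine $C\|f_\va\|_2^2$. Your treatment of $\rho_\va$ and $J_\va$ via Cauchy--Schwarz, $\int_V v\,dv=0$, and the variance identity $\int_V(f_\va-\langle f_\va\rangle)^2\,dv=\tfrac{1}{2|V|}\int_{V\times V}|f_\va'-f_\va|^2$ is exactly right.
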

The  flux $J_{\va}$ in \eqref{est2} arises because integration of (\ref{k4}) with respect to $v$ gives
\be\label{k6}
\frac{\p \rho_{\va}}{\p t}+{\rm div}_x J_{\va}=0.
\ee

\begin{lemma}[Strong local compactness on $\nabla S_\va(t,x)$] The signal function $S_\va$ is uniformly bounded and  $\nabla S_\va(t,x)$  is strongly locally compact in $L^1_{\rm loc}((T_1,\infty) \times \R^d)$ for all $T_1 >0$.
\end{lemma}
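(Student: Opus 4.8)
I would prove the lemma by producing, uniformly in $\va$, a bound on $\nabla S_\va$ in a space that embeds compactly into $L^1_{\rm loc}((T_1,\infty)\times\R^d)$, and then concluding by the Aubin--Lions--Simon lemma. The only ingredients I would use are those already available: by Theorem~\ref{th:apriori} the densities $\rho_\va$ are bounded in $L^2([0,T]\times\R^d)$ (and, by mass conservation, in $L^\infty([0,\infty);L^1(\R^d))$), the fluxes $J_\va$ are bounded in $L^2([0,T]\times\R^d)$, and $\p_t\rho_\va=-{\rm div}_xJ_\va$ by~\eqref{k6}. The uniform bound on $S_\va$ itself I would get from these facts and standard regularity for the linear equation~\eqref{S1} (heat/Bessel-potential estimates when $\tau=1$, elliptic regularity when $\tau=0$, combined with the $L^1$ and $L^2$ bounds on $\rho_\va$); this already settles the first assertion.

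For the spatial regularity of $\nabla S_\va$, fix $0<T_1<T_2$ and a ball $B_R$. When $\tau=0$, interior elliptic estimates applied to $-\Delta S_\va+\alpha S_\va=\rho_\va$ bound $S_\va$ in $L^2((T_1,T_2);H^2(B_R))$ in terms of $\|\rho_\va\|_{L^2((T_1,T_2)\times B_{2R})}$ and $\|S_\va\|_{L^\infty}$, hence uniformly in $\va$. When $\tau=1$, the same bound — together with a uniform bound for $\p_tS_\va$ in $L^2((T_1,T_2)\times B_R)$ — follows from interior parabolic $L^2$-regularity for $\p_tS_\va-\Delta S_\va+\alpha S_\va=\rho_\va$ on $(T_1/2,T_2)\times B_{2R}$; here the hypothesis $T_1>0$ is used essentially, to exploit parabolic smoothing and avoid the initial layer where only $S^0\in L^1\cap L^\infty$ is known. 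Either way, $\nabla S_\va$ is bounded in $L^2((T_1,T_2);H^1(B_R))$, uniformly in $\va$.

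For the time regularity I would argue as follows. When $\tau=1$, since $\p_tS_\va$ is bounded in $L^2((T_1,T_2)\times B_R)$, the family $\p_t(\nabla S_\va)=\nabla(\p_tS_\va)$ is bounded in $L^2((T_1,T_2);H^{-1}(B_R))$. When $\tau=0$, I would differentiate the elliptic equation in $t$ and use~\eqref{k6} to write $\p_t\nabla S_\va=-\nabla(\alpha-\Delta)^{-1}{\rm div}_xJ_\va$; the operator $\nabla(\alpha-\Delta)^{-1}{\rm div}_x$ has the bounded Fourier multiplier $-\xi\otimes\xi/(\alpha+|\xi|^2)$ (a composition of Riesz-type transforms when $\alpha=0$), so it is bounded on $L^2(\R^d)$ and $\p_t\nabla S_\va$ is bounded in $L^2((T_1,T_2)\times\R^d)$, uniformly in $\va$. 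In all cases $\p_t(\nabla S_\va)$ is bounded in $L^2((T_1,T_2);H^{-1}(B_R))$.

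To conclude, I would localise with a cutoff $\psi\in C^\infty_c(B_{2R})$, $\psi\equiv1$ on $B_R$: then $\psi\,\nabla S_\va$ is bounded in $L^2((T_1,T_2);H^1_0(B_{2R}))$ while $\p_t(\psi\,\nabla S_\va)$ is bounded in $L^2((T_1,T_2);H^{-1}(B_{2R}))$, and since $H^1_0(B_{2R})\hookrightarrow\hookrightarrow L^2(B_{2R})\hookrightarrow H^{-1}(B_{2R})$, the Aubin--Lions--Simon lemma gives relative compactness of $\{\psi\,\nabla S_\va\}$ in $L^2((T_1,T_2);L^2(B_{2R}))$, hence of $\{\nabla S_\va\}$ in $L^2((T_1,T_2)\times B_R)\subset L^1((T_1,T_2)\times B_R)$. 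A diagonal extraction over an exhausting sequence of times $T_2\uparrow\infty$ and balls $B_R\uparrow\R^d$ then shows that any sequence $\va_n\to0$ has a subsequence along which $\nabla S_{\va_n}$ converges strongly in $L^1_{\rm loc}((T_1,\infty)\times\R^d)$, which is the claim. The step I expect to be the most delicate is the $\tau=1$ case near $t=0$: one must genuinely use the parabolic smoothing on $(T_1/2,T_2)$ rather than any regularity of $S^0$, which is exactly why the statement is localised in time away from the origin; the remaining ingredients (interior elliptic/parabolic estimates, $L^2$-continuity of the zeroth-order operator $\nabla(\alpha-\Delta)^{-1}{\rm div}$, and Aubin--Lions--Simon) are standard.
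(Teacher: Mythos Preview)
Your proposal is correct and follows the same strategy as the paper: use the a priori $L^2$ bounds on $\rho_\va$ and $J_\va$ together with the regularity of~\eqref{S1} for space compactness of $\nabla S_\va$, and the identity $\p_t\rho_\va=-{\rm div}_xJ_\va$ to bound $\p_t\nabla S_\va$ for time compactness. The paper extracts space compactness directly from the convolution (heat-kernel/Bessel) representation and time compactness from an $L^2$ bound on $\p_t\nabla S_\va$, whereas you package the same ingredients via interior $H^2$-regularity and Aubin--Lions--Simon; these are interchangeable standard conclusions of the same argument.
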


\proof We use that $\rho_\va $ is bounded in $L_{\rm loc}^\infty((0,\infty); L^1( \R^d) \cap L^2(\R^d))$ from Theorem \ref{th:apriori}.

From usual elliptic or parabolic regularizing effects (see Lemma \ref{g} in Appendix), and using only the above bounds for $\rho_\va$, we conclude that $S_\va$ is bounded in $L_{\rm loc}^\infty((0,\infty); L^p( \R^d))$,  with $p^*:=\frac{d}{d-2}< p \leq  \frac{2d}{d-4}$ ($p=\infty$ in dimensions $d=2, \; 3$, $p <\infty$ in dimension 4).

 Next, the equation on $ \p_t S_\va$ reads
$$
\tau \p_t ( \p_t S_\va) - \Delta  (\p_t S_\va)+ \alpha ( \p_t S_\va) =  \p_t \rho_\va = - {\rm div} J_{\va}, \quad J_{\va} \in_{\rm bdd} L^2([0,T] \times \R^d)
$$
and gives $ \p_t S_\va \in L^r((T_1,T); L^{2} (\R^d))$, for any $r<2$ any  $0<T_1<T$ thanks to the direct estimates on the heat kernel (see \eqref{hk} in Appendix with $p=1$, $l=0$, $k=1$).

Next, we notice by a similar argument,  that $\nabla S_\va$ is bounded in $L^\infty((T_1,T); L^2( \R^d))$ for $0<T_1<T$.  Compactness in $x$ for $\nabla S_\va$ also follows from the convolution formula.  Finally, we write
$$
\tau \p_t ( \p_t \nabla S_\va) - \Delta  (\p_t  \nabla S_\va)+ \alpha ( \p_t  \nabla S_\va)  = - \nabla{\rm div} J_{\va},
$$
and thus we conclude that $ \p_t  \nabla S_\va \in_{\rm bdd} L^2((T_1,T) \times \R^d)$, for all $0<T_1<T$ , thus providing time compactness.
\qed

As a conclusion, we may extract a subsequence $\{\varepsilon(n)\}_{n \geq 1}$ such that $\nabla S_{\va(n)} \to \nabla S_0$ locally  in all spaces $L_{\rm loc}^r((0,\infty); L^q( \R^d))$, $1\leq r < \infty$, $\frac{d}{d-1}<q \leq 2p^*$.  Notice that the bounds above tell us that, as~$n \to \infty$,
$$
\Psi_{\va(n)}[S_{\va(n)}]=\Psi({\va(n)} \p_t S_{\va(n)}+ v\cdot\nabla_x S_{\va(n)}) \to \Psi (v\cdot\nabla_x S_0).
$$

\subsection{The convergence result}

As a consequence of the a priori estimates in Theorem~\ref{th:apriori} and the discussion in section \ref{sec:compactness}, we may also extract subsequences (still denoted by $\{\va(n)\}$) such that, weakly in $L^2([0,T] \times \R^d)$ for all $T >0$, as~$n \to \infty$, we have
\begin{equation}\label{conv}
f_{\va(n)}  \to  f_0(t,x,v)=  \rho_0(t,x) F(v), \qquad  J_{\va(n)}(t,x):= \frac{1}{\va(n)} \int_V v f_{\va(n)}  dv \to J_0(t,x)
\end{equation}
where $F(v)$ is a uniform  distribution on $V$:
\begin{equation}\label{F}
F(v)=\frac{\mathbbm{1}_{\{v\in V\}}}{|V|}.
\end{equation}
With the symmetry assumption of $V$, it satisfies $\int_V vF(v)dv=0$, and $\int_V F(v)dv=1. $ In the limit we infer from (\ref{k6}) that
\be\label{k6L}
\frac{\p \rho_{0}}{\p t}+\nabla_x \cdot J_{0}=0.
\ee

The flux $J_0$ can be identified and we are going to show in the next subsection the following
\begin{theorem} [Derivation of the FLKS system]
Assuming  (\ref{q1}), the above limit $(\rho_0, S_0)$ satisfies the FLKS system~(\ref{ksf}) with initial condition $(\int_V f^0(x,v) dv, S^0)$ and
$$
D=\frac{1}{\lambda_0 |V|^2} \int_V v\otimes v dv, \qquad  \phi(u) = - \frac{1}{\lambda_0 |V| u} \int_V v_1 \Psi(v_1 u) dv >0 \; \mbox{ for } u\geq 0,
$$
where $v_1$ is the first component of the vector field $v$.
\label{th:asuympt}
\end{theorem}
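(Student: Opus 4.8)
The plan is to pass to the limit $n\to\infty$ in the rescaled kinetic equation \eqref{k4} using the method of moments, exploiting the strong compactness of $\nabla S_{\va(n)}$ established above to handle the nonlinear tumbling term. First I would write the Chapman--Enskog-type ansatz $f_{\va}=\rho_{\va}F(v)+\va g_{\va}$ where $g_\va$ is the fluctuation, or equivalently work directly with the moment identities. Multiplying \eqref{k4} by $v$ and integrating over $V$, the leading operator $\lambda_0\int_V(f'_\va-f_\va)dv'$ contributes $-\lambda_0|V|\,\frac1\va\int_V v f_\va\,dv=-\lambda_0|V|\,J_\va$ (using the $v$-symmetry of $V$, so that $\int_V v\,dv=0$), the transport term $\va v\cdot\nabla_x f_\va$ contributes $\int_V v\otimes v\,\nabla_x\rho_\va F\,dv + O(\va)$ after dividing through by $\va$, and the remaining terms are $O(\va)$. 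This yields, in the limit, an algebraic relation of the form
\begin{equation*}
\lambda_0|V|\, J_0 = -\frac{1}{|V|}\Big(\int_V v\otimes v\,dv\Big)\nabla_x\rho_0 + \chi\,\rho_0 \int_V v\,\Psi(v\cdot\nabla_x S_0)\,dv ,
\end{equation*}
where the last term is the (weak) limit of $\chi\int_V\int_V(\Psi^*_\va f'_\va - \Psi_\va f_\va)\,dv'\,dv$ after multiplication by $v$ and division by $\va$; here one uses that $f_{\va(n)}\to\rho_0 F(v)$ weakly in $L^2$, $\Psi_{\va(n)}[S_{\va(n)}]\to\Psi(v\cdot\nabla_x S_0)$ strongly (from the Lemma), and the term with $\Psi^*_\va$ integrates to a contribution that vanishes by the $v$-symmetry of $V$ after the $v'$-average, or is absorbed into the same structure. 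Solving for $J_0$ and inserting into \eqref{k6L} gives $\p_t\rho_0 = D\Delta\rho_0 - \mathrm{div}_x(\rho_0\, u[S_0])$ with $D=\frac{1}{\lambda_0|V|^2}\int_V v\otimes v\,dv$ (a scalar multiple of the identity by rotational symmetry of $V$) and $u[S_0] = -\frac{\chi}{\lambda_0|V|}\int_V v\,\Psi(v\cdot\nabla_x S_0)\,dv$; by rotational symmetry this vector is parallel to $\nabla_x S_0$, and writing $u=\phi(|\nabla S_0|)\nabla S_0$ one reads off $\phi(u)=-\frac{\chi}{\lambda_0|V|u}\int_V v_1\Psi(v_1 u)\,dv$ (the stated formula, up to the constant $\chi$ which can be absorbed by rescaling).

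The nonnegativity $\phi(u)>0$ and the flux-limitation property \eqref{asF} should be checked next. Positivity follows because $\Psi$ is decreasing: for $u>0$ the integrand $v_1\Psi(v_1 u)$ has, after symmetrizing $v_1\mapsto -v_1$, the sign of $v_1(\Psi(v_1u)-\Psi(-v_1u))<0$, so the integral is negative and $\phi(u)>0$; at $u=0$ one takes the limit, getting $\phi(0)=-\frac{\chi}{\lambda_0|V|}\int_V v_1^2\Psi'(0)\,dv>0$. Boundedness of $r\phi(r)$ as $r\to\infty$ follows from $\Psi\in L^\infty$: $|r\phi(r)|=\frac{\chi}{\lambda_0|V|}|\int_V v_1\Psi(v_1 r)\,dv|\le \frac{\chi\|\Psi\|_\infty}{\lambda_0|V|}\int_V|v_1|\,dv =: A_\infty$, and $\phi(0)$ is clearly the max of $\phi$ (monotonicity of $u\mapsto\phi(u)$ is not needed, only the pointwise bound). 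Finally I would note that the limit identification of $\rho_0$ and $S_0$ as a genuine (distributional, then by parabolic regularity classical) solution of \eqref{ksf} with the correct initial data follows from the weak convergence in \eqref{conv}, the strong convergence of $\nabla S_{\va(n)}$, and the stability of the $S$-equation \eqref{S1} under these convergences; the initial condition passes to the limit since $f^0,S^0$ are $\va$-independent.

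The main obstacle is the rigorous justification of the limit in the nonlinear flux term $\frac{\chi}{\va}\int_V\int_V(\Psi^*_\va f'_\va-\Psi_\va f_\va)\,dv'\,dv\cdot v$: this is a product of a weakly converging sequence ($f_{\va(n)}$) with a sequence ($\Psi_{\va(n)}$) that we only know converges strongly in $L^1_{\mathrm{loc}}$ in $(t,x)$ — we must combine the strong $L^1_{\mathrm{loc}}((T_1,\infty)\times\R^d)$ compactness of $\nabla S_{\va(n)}$ from the Lemma with the $L^2$ (hence locally uniformly integrable) bound on $f_{\va(n)}$ to get weak-$\times$-strong convergence of the product, while simultaneously controlling the apparent $\frac1\va$ singularity. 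The resolution is that the $O(1/\va)$ is only apparent: the difference $\Psi^*_\va - \Psi_\va = \Psi(\va\p_tS+v'\cdot\nabla S)-\Psi(\va\p_tS+v\cdot\nabla S)$ is $O(1)$, not small, but when paired against $f'_\va$ versus $f_\va$ the combination, after the $v$-moment and the symmetrization in $v,v'$, produces exactly one factor that, using $f_\va=\rho_\va F+O(\va)$ in the strong topology provided by \eqref{est1} (which controls $\frac{1}{\va^2}\|f'_\va-f_\va\|_2^2$, i.e. $\|f_\va-\rho_\va F\|_2 = O(\va)$ after averaging), cancels the $1/\va$ and leaves the finite limit written above. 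Carrying out this algebra carefully — keeping track of which terms are genuinely $O(\va)$ in which norm, and invoking \eqref{est1}, \eqref{est2} at each step — is the technical heart of the argument; everything else is the standard moment closure for drift-diffusion limits.
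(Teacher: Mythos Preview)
Your overall strategy---take the $v$-moment of \eqref{k4}, pass to the limit using weak convergence of $f_{\va(n)}$ and strong compactness of $\nabla S_{\va(n)}$, then use rotational symmetry of $V$ to read off $\phi$---is exactly what the paper does. The argument for $\phi(u)>0$ via the symmetrization $v_1\mapsto -v_1$ is correct and a nice addition.

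However, your ``main obstacle'' paragraph is built around a phantom difficulty: there is no $1/\va$ singularity in the $\Psi$ term. Recall that in \eqref{k4} the tumbling correction carries a prefactor $\chi\va$, not $\chi$. After multiplying by $v$, integrating over $V$, and dividing the equation by $\va$ (as you correctly do for the other terms), the contribution is
\[
\chi\int_V v\int_V\big(\Psi^*_\va[S_\va] f'_\va - \Psi_\va[S_\va] f_\va\big)\,dv'\,dv
= -\,\chi\,|V|\int_V v\,\Psi_\va[S_\va]\,f_\va\,dv,
\]
where the first piece vanishes because $\int_V v\,dv=0$ and $\Psi^*_\va f'_\va$ is independent of $v$. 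This is an $O(1)$ term, and passing to the limit requires only the weak--strong product argument you describe: $f_{\va(n)}\rightharpoonup\rho_0 F$ in $L^2$ and $\Psi_{\va(n)}[S_{\va(n)}]\to\Psi(v\cdot\nabla_x S_0)$ strongly. No appeal to the $O(\va^2)$ dissipation bound in \eqref{est1} is needed here, nor any cancellation between the $\Psi^*_\va f'_\va$ and $\Psi_\va f_\va$ pieces---the former simply disappears by symmetry. Once you correct this miscount of $\va$-powers, the discussion of ``resolving'' the singularity via $\|f_\va-\rho_\va F\|_2=O(\va)$ can be dropped, and you recover exactly the paper's one-line limit passage. (Incidentally, the sign on the $\chi$ term in your displayed algebraic relation for $J_0$ should be $-\chi$, not $+\chi$.)
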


Notice that $\phi(0)$ is well defined by continuity and $\phi(0) =-\frac{\Psi'(0)}{\lambda_0|V|} \int_V v_1^2  dv $.

\subsection{Asymptotic analysis}

In order to complete the proof of Theorem~\ref{th:asuympt}, we proceed to find the flux term $J_{0}$ in \eqref{k6L}. Multiplying (\ref{k4}) by $v$ and integrating, we get
\begin{equation*}\label{k4n}
\begin{aligned}
&\va \frac{\p}{\p t}J_{\va}(t,x)+ \nabla_x \cdot \int_V v\otimes v f_{\va}dv
\\
&= \frac{\lambda_0}{\va} \int_{V}v\int_V(f'_{\va}-f_{\va})dv'dv+ \chi \int_{V} v\int_{V} (\Psi^*_\va[S_\va] f'_{\va}-\Psi_\va[S_\va]f_{\va})dv'dv
\\
&= - \lambda_0 |V| J_{\va} - \chi |V|  \int_{V} v\Psi_\va[S_\va]f_{\va} dv
\end{aligned}
\end{equation*}
using the definition of $J_{\va}$  in \eqref{est2}, the definition of $\Psi^*_\va$ in \eqref{k4}, and the symmetry of $V$.

We may pass to the weak limit and find, based on the above mentioned strong compactness for $S_\va$ and its derivatives as well as (\ref{conv}) and (\ref{F}), that
$$
\int_V v\otimes v dv  \;  \nabla_x \rho_0 = - \lambda_0 |V| J_0 - \chi |V|  \rho_0  \int_{V} v\Psi (v\cdot \nabla S_0) dv.
$$
In other words, we have identified the flux term
\be\label{k8}
J_0=-\nabla_x \rho_0\frac{1}{\lambda_0 |V|^2}\int_Vv\otimes v dv-\frac{\chi}{\lambda_0|V|}\rho_0\int_V v \Psi (v\cdot \nabla S_0)dv.
\ee
Using (\ref{k8}), the leading order terms of (\ref{k6}) and (\ref{S1}) lead to the following drift-diffusion equations:
\begin{eqnarray}\label{ks}
\begin{cases}
\p_t \rho_0={\rm div}(D\nabla \rho_0-\chi\rho_0 u[S]),\\
\tau \p_tS_0=\Delta S_0+\rho_0-\alpha S_0,
\end{cases}
\end{eqnarray}
where
\begin{eqnarray}\label{add}
\begin{aligned}
D=\frac{1}{\lambda_0 |V|^2}\int_V v\otimes v dv,\quad
u[S]=-\frac{1}{\lambda_0|V|}\int_V v\Psi(v\cdot \nabla S_0)dv.
\end{aligned}
\end{eqnarray}
By rotational symmetry of $V$, $u[S]$ is proportional to $\nabla S$ and hence yields the expression of $\phi(u)$ in Theorem \ref{th:asuympt}.
Due to the hypothesis (H) on $\Psi$, the drift velocity term $u[S]$ is uniformly bounded in time $t$ and space $x$. This is the main feature of the macroscopic limit model resulting from the stiff response postulated in the kinetic models.
\qed
\subsection{Example}
We consider a specific form of signal response function $\Psi$ as follows
\begin{equation*}\label{phi}
\Psi({Y}/{\va})=-\frac{Y}{\sqrt{\va^2+Y^2}} \ \mbox{ or } \ \Psi(z)=-\frac{z}{\sqrt{1+z^2}}
\end{equation*}
and derive an explicit flux-limited Keller-Segel system. When $\varepsilon=0$,  $\Psi(Y/\va)=-\mathrm{sign}(Y)$ which is a sign function reflecting the stepwise stiff response. However, as $\varepsilon>0$, $\Psi(Y/\va)$ is smooth and $\Psi'(0)=-\frac{1}{\varepsilon}$.

By substitution, we have from (\ref{k3}) that
$$
\Psi_{\va}[S](v',v)=-\frac{\va \p_t S+v\cdot \nabla S}{\sqrt{1+(\va \p_tS+v\cdot\nabla S)^2}}.
$$
Then the limit equations of (\ref{k1})-(\ref{S1}) read as (see \eqref{ks}-\eqref{add})
\begin{eqnarray}\label{ksn}
\begin{cases}
\p_t \rho={\rm div}(D\nabla \rho-\rho \phi(|\nabla S|)\nabla S),\\
\tau \p_tS=\Delta S+\rho-\alpha S,\\
\end{cases}
\end{eqnarray}
where we have recovered $(\rho_0, S_0)$ by $(\rho, S)$ for brevity and,
by rotational symmetry of V,
\begin{eqnarray}\label{addn}
\begin{aligned}
D=\frac{1}{\lambda_0 |V|^2}\int_V v\otimes v dv,\quad
\phi(|\nabla S|)=\frac{\chi}{\lambda_0|V|}\int_V \frac{v_1^2}{\sqrt{1+v_1^2 |\nabla S|^2}}dv,
\end{aligned}
\end{eqnarray}
where $v_1$ is the first component of $v$ colinear to $\nabla S$: $v_1=v\cdot \frac{\nabla S}{|\nabla S|}.$
Clearly both $\phi$ and $\phi(|\nabla S|)\nabla S$ are bounded for all $\nabla S$, which implies that the chemotactic (drift) velocity is limited. The system (\ref{ksn}) with (\ref{addn}) gives a specific example of the FLKS system (\ref{ksf}).


\subsection{Global existence for the macroscopic system}

Finally, we state the existence result for system \eqref{ksf} under the assumption \eqref{asF}.
A specific example of function $\phi$ satisfying this set of assumptions has been given in \eqref{addn}.
Under the assumption \eqref{asF}, the chemotactic (or drift) velocity term $\phi(|\nabla S|)\nabla S$ is bounded and hence the global existence of classical solutions of (\ref{ksf}) can be directly obtained.
\begin{theorem}[Global existence]\label{Global}
Let $0\leq (\rho^0, S^0) \in (W^{1, p}(\R^d))^2$ with $p>d$ and $\alpha \geq 0$.
Let $\phi\in C^1(\R^+;\R^+)$ such that \eqref{asF} holds. Then the Cauchy problem (\ref{ksf}) has a unique solution $(\rho, S) \in C([0,\infty)\times \R^d) \times C^2( (0,\infty)\times \R^d)$ such that
$$
\forall\, t>0, \quad \|\rho(t,\cdot)\|_{L^\infty(\R^d)}\leq C, \quad d \geq 2,
$$
where $C>0$ is a constant independent of $t$. Moreover cell mass is conserved: $\|\rho(t)\|_{L^1(\R^d)}=\|\rho^0\|_{L^1(\R^d)}=M$.
\end{theorem}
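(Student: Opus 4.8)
The plan is to prove Theorem~\ref{Global} by a fixed-point argument on $S$ combined with standard parabolic theory, exploiting crucially the two boundedness assumptions in \eqref{asF} which make the drift term $b(t,x) := \phi(|\nabla S|)\nabla S$ an \emph{a priori bounded} vector field (by $A_\infty$, suitably interpreted: $|b| = |\phi(|\nabla S|)\, |\nabla S|| \le A_\infty$). First I would set up local well-posedness: given $S^0, \rho^0 \in W^{1,p}(\R^d)$ with $p>d$, the elliptic/parabolic equation for $S$ (either $-\Delta S + \alpha S = \rho$ when $\tau=0$, or the heat-type equation when $\tau=1$) has, by Lemma~\ref{g} in the appendix and maximal regularity, the property that $\rho \in C([0,T]; L^\infty)$ (say) yields $\nabla S \in C([0,T]; C^{0,\gamma}(\R^d))$ for some $\gamma \in (0,1)$ — here the Sobolev embedding $W^{2,p} \hookrightarrow C^{1,\gamma}$ for $p>d$ is what I would use. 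Then the first equation of \eqref{ksf} is a linear parabolic equation for $\rho$ with bounded drift $b$, and by Duhamel / heat-kernel estimates (the bounds \eqref{hk} in the appendix) one solves for $\rho$ and closes a contraction on a short time interval in the space $C([0,T]; W^{1,p}) \times C([0,T]; W^{1,p})$. Uniqueness follows from the same contraction estimate.

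The second and main step is the \emph{global} $L^\infty$ bound on $\rho$, uniform in $t$, which is what prevents blow-up and is the heart of the theorem. Here I would write $\rho$ via the Duhamel formula for $\partial_t \rho = D\Delta\rho - \mathrm{div}(\rho b)$ with $\|b\|_\infty \le A_\infty$:
\begin{equation*}
\rho(t) = e^{tD\Delta}\rho^0 - \int_0^t \nabla e^{(t-s)D\Delta} \cdot \big(\rho(s) b(s)\big)\, ds,
\end{equation*}
and use the kernel bounds $\|\nabla e^{tD\Delta} g\|_{L^q} \le C t^{-\frac12 - \frac d2(\frac1r - \frac1q)} \|g\|_{L^r}$ from the appendix. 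Starting from the conserved mass $\|\rho(t)\|_{L^1} = M$ and interpolating, one runs a bootstrap on a ladder of $L^q$ spaces — the gain $t^{-1/2}$ at each convolution with $\nabla e^{tD\Delta}$ together with mass conservation and the uniform bound on $b$ is exactly what gives, after finitely many steps, a bound $\|\rho(t)\|_{L^\infty} \le C$ with $C$ \emph{independent of $t$} (the time-independence coming from the fact that $\|b\|_\infty$ does not grow and that $\alpha \ge 0$ only helps). This is a classical De~Giorgi / Moser or Duhamel-bootstrap argument that works precisely because the nonlinearity is "pushed" entirely into the bounded coefficient $b$; with the unbounded drift of the classical Keller--Segel system this step fails, which is the conceptual point the theorem is making.

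The final step is to upgrade regularity and globalize. Once $\rho \in L^\infty((0,\infty)\times\R^d)$ is known, Lemma~\ref{g} / Schauder theory gives $\nabla S \in L^\infty_{\mathrm{loc}}((0,\infty); C^{0,\gamma})$, hence $b \in C^{0,\gamma}$ in space; parabolic Schauder estimates applied to the $\rho$-equation then yield $\rho \in C^{1,2}((0,\infty)\times\R^d)$ and a genuine classical solution, and a second Schauder step gives $S \in C^2((0,\infty)\times\R^d)$. The a priori $L^\infty$ bound on $\rho$, being uniform in $t$, rules out finite-time blow-up, so the local solution extends to $[0,\infty)$ by the standard continuation argument. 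Mass conservation $\|\rho(t)\|_{L^1} = M$ is immediate from integrating the first equation (the divergence term integrates to zero, using decay of $\rho b$ at infinity which follows from the $W^{1,p}$-valued continuity). The main obstacle is the global-in-time, $t$-uniform $L^\infty$ estimate of the second step: one must be careful to track constants through the Duhamel ladder so that they do not accumulate a growing power of $t$ — this is where assumption \eqref{asF} (boundedness of both $\phi(r)$ and $r\phi(r)$) is used in an essential way, and where the restriction $d \ge 2$ enters through the Sobolev exponents in the bootstrap; the case $d=1$ is easier and handled separately in Section~\ref{sec:1D}.
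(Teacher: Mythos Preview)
Your overall architecture matches the paper's: local existence via a fixed-point argument, then an a~priori $L^\infty$ bound on $\rho$ uniform in time to continue the solution globally, with the key observation being that the drift $b=\phi(|\nabla S|)\nabla S$ is bounded by $A_\infty$ thanks to \eqref{asF}. The paper's own proof is a two-line sketch citing Amann's theory for local existence and \cite[Lemma~1]{HPS} for the $L^\infty$ bound via \emph{Nash iterations}.

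The substantive difference is in how you propose to obtain the time-uniform $L^\infty$ bound. You suggest a Duhamel bootstrap on a ladder of $L^q$ spaces. This is a legitimate route to an $L^\infty$ bound, but as written it does not obviously yield a constant \emph{independent of $t$}: from the Duhamel representation with $|b|\le A_\infty$ one gets, for each $p$, an inequality of the form
\[
\|\rho(t)\|_{L^p}\le \|\rho^0\|_{L^p}+C A_\infty\int_0^t (t-s)^{-1/2}\|\rho(s)\|_{L^p}\,ds,
\]
and the singular Gronwall inequality (Lemma~\ref{gronwall}) then gives at best exponential-in-$t$ growth. Boundedness of $b$ alone, without exploiting dissipation, is not enough for a time-uniform bound; this is precisely the ``accumulation of powers of $t$'' you flag as the main obstacle, and your proposal does not actually say how to overcome it. The paper sidesteps this by using the Nash/Moser energy method: multiply the $\rho$-equation by $p\rho^{p-1}$, absorb the cross term $\int \rho^{p/2}\,b\cdot\nabla\rho^{p/2}$ by Young's inequality using only $|b|\le A_\infty$, and arrive at a differential inequality where the full Sobolev dissipation $\|\nabla\rho^{p/2}\|_2^2$ survives; iteration in $p$ then gives a bound depending only on $M$, $A_\infty$, and $\|\rho^0\|_\infty$, with no time growth. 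You do mention ``De~Giorgi/Moser'' alongside the Duhamel bootstrap --- that half of your parenthetical is the one that works and is what the paper invokes. The remainder of your plan (Schauder bootstrapping to $C^2$ regularity, continuation, mass conservation) is standard and correct.
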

\begin{proof}
The proof consists of two steps. The first step is the local existence of solutions which can be readily obtained by the standard fixed point theorem (cf. \cite{Am1, Am2}). The second step is to derive the {\it a priori} $L^\infty$ bound of $u$ in order to extend local solutions to global ones. This can be achieved by the method of  Nash iterations as it is well described in \cite[Lemma 1]{HPS}. Although the procedure therein was shown for bounded domain with Neumann boundary conditions, the estimates directly carry over to the whole space $\R^d$.
\end{proof}

\section{Radial steady states in dimension $d\geq 2$}\label{sec:steady}

Since it is proved in section \ref{sec:asymp} that when $\alpha>0$ diffusion takes the advantage over attraction implying the time decay towards zero of the solutions to system \eqref{ksf}, we are only interested in the case $\alpha=0$.
The stationary problem for system (\ref{ksf}) is non-trivial due to the nonlinearity.
Below we explore a simpler case: existence of radial symmetric stationary solutions.
The stationary system of \eqref{ksf} when $\alpha=0$ written in radial coordinates for $d\geq 2$ reads
\beq \bepa
\D  - \frac{1}{r^{d-1}} (r^{d-1} S(r)')' = \rho(r), \qquad r >0,
\\[10pt]
\D \frac{1}{r^{d-1}} \big[r^{d-1} \big(-\rho(r)' + \rho(r) S(r)'  \phi(|S(r)'|) \big) \big]'=0,
\\[10pt]
S'(0)=0, \qquad  \rho'(0)=0.
\eepa
\label{rss:eq1}
 \eeq
Notice that there is another relation, at infinity, expressing that the mass is given by
\beq
\frac{M}{|\mathbb{S}^{d-1}|}=  \int_0^{+\infty} r^{d-1} \rho(r) dr =-  \int_0^{+\infty} (r^{d-1} S(r)')' dr= -  \D \lim_{r\to \infty} r^{d-1} S(r)'.
\label{rss:eqm}
 \eeq

We are going to prove the following result.
\begin{theorem}\label{th:radial}
There are no positive radially symmetric steady state solutions with finite mass to system (\ref{ksf}) with $\alpha=0$ in dimension $d>2$.
In dimension 2 (i.e. $d=2$), system (\ref{ksf}) with $\alpha=0$ has radially symmetric steady states if and only if $M > \frac{8 \pi}{\phi(0)}$.
\end{theorem}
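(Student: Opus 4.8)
The plan is to reduce the radial stationary system \eqref{rss:eq1} to a single scalar ODE for $S'$, exploit the structure of the flux-limited drift, and then analyze the resulting equation quantitatively in each dimension. First I would integrate the second equation of \eqref{rss:eq1}: using $\rho'(0)=0$ we get $r^{d-1}\big(-\rho' + \rho\, S'\,\phi(|S'|)\big) \equiv 0$, hence $\rho' = \rho\, S'\,\phi(|S'|)$ on $(0,\infty)$. Introducing a primitive $\Phi$ of $r\mapsto r\,\phi(r)$ (which is bounded by $A_\infty$ under \eqref{asF}), and observing that for a radial profile with $S'\le 0$ — the physically relevant sign, since $\rho\ge 0$ forces $-(r^{d-1}S')' = r^{d-1}\rho\ge 0$, so $r^{d-1}S'$ is nonincreasing from $0$, i.e. $S'\le 0$ — one can write $S'\phi(|S'|) = \frac{d}{dr}\big(-\Phi(|S'|)\big)\cdot(\text{sign factor})$; more cleanly, set $w(r) := -r^{d-1}S'(r)\ge 0$, so that $w(0)=0$, $w$ is nondecreasing, $w(\infty) = M/|\mathbb{S}^{d-1}|$ by \eqref{rss:eqm}, and $\rho = w'/r^{d-1}$. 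Then the relation $\rho' = \rho S'\phi(|S'|)$ becomes, after dividing by $\rho$ and integrating, $\log\rho(r) - \log\rho(0) = -\int_0^r \frac{w(s)}{s^{d-1}}\,\phi\!\big(\tfrac{w(s)}{s^{d-1}}\big)\,ds$, so $\rho$ is determined by $w$; substituting $\rho = w'/r^{d-1}$ gives a closed first-order ODE for $w$. I expect this bookkeeping — getting a clean autonomous-looking equation and tracking the boundary behavior at $r=0$ and $r=\infty$ — to be the technical heart of the argument.

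For $d>2$: the claim is nonexistence for any finite $M$. Here I would argue by contradiction. From $\rho'=\rho S'\phi(|S'|)$ and $|S'\phi(|S'|)|\le A_\infty$ (by \eqref{asF}, since $|S'\phi(|S'|)|\le \max_r |r\phi(r)| = A_\infty$... more precisely $|S'|\phi(|S'|)\le A_\infty$), we get $\rho' \ge -A_\infty \rho$, hence $\rho(r)\ge \rho(0)e^{-A_\infty r}$ — but that gives a lower bound, not directly a contradiction with finite mass in high dimension because $r^{d-1}e^{-A_\infty r}$ is integrable. The real obstruction must come from the coupling at large $r$: since $w(\infty)=M/|\mathbb{S}^{d-1}|<\infty$, we have $-S'(r) = w(r)/r^{d-1}\to 0$ like $r^{-(d-1)}$ (at least), so $\phi(|S'|)\to\phi(0)>0$, and near infinity $\rho'/\rho = S'\phi(|S'|) \sim \phi(0)S' \sim -\phi(0)\,\tfrac{M/|\mathbb S^{d-1}|}{r^{d-1}}$, which is integrable for $d>2$; thus $\rho(r)\to \rho_\infty>0$ as $r\to\infty$, contradicting $\int r^{d-1}\rho\,dr<\infty$. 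So the contradiction is: finite mass $\Rightarrow$ slow decay of $S'$ $\Rightarrow$ $\rho$ has a positive limit $\Rightarrow$ infinite mass. I would need to also rule out $\rho(0)=0$ (then $\rho\equiv 0$ by the ODE, not positive) and make the asymptotic integrability estimate rigorous, but the scheme is clear.

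For $d=2$: I would solve the ODE more explicitly. With $w=-rS'$, $w(0)=0$, $w'(r) = r\rho(r)$, and $\rho'/\rho = S'\phi(|S'|) = -\tfrac{w}{r}\phi(\tfrac{w}{r})$. A natural substitution is $\rho(r) = \rho(0)\exp\big(-\int_0^r \tfrac{w}{s}\phi(\tfrac ws)\,ds\big)$; combined with $w' = r\rho$ this is a genuine scalar ODE. The threshold $M^* = 8\pi/\phi(0)$ should emerge by linearizing or bounding $\phi$ near $0$ by $\phi(0)$: replacing $\phi$ by the constant $\phi(0)$ turns the system into the classical Keller–Segel stationary problem in $\R^2$, whose radial solutions exist exactly at mass $8\pi/\phi(0)$ (the logarithmic profile $\rho(r) = \tfrac{8\lambda}{\phi(0)(\lambda + r^2)^2}$ type). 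Because $\phi(r)\le\phi(0)$ for all $r$ (first part of \eqref{asF}), the flux-limited drift is weaker than the linear one, which should translate — via an ODE comparison on $w$ or on $\rho$ — into: existence requires \emph{more} mass, $M>M^*$, and conversely a shooting argument in $\rho(0)>0$ produces, for every $M>M^*$, a solution with $w(\infty)=M/(2\pi)$. The delicate point will be the shooting/continuity argument: show $w(\infty)$ as a function of $\rho(0)$ ranges over $(M^*/(2\pi),\infty)$, using monotonicity in $\rho(0)$ and the two limiting regimes $\rho(0)\to 0$ (where $w(\infty)\to$ value governed by linear problem, giving the threshold) and $\rho(0)\to\infty$ (where saturation of the flux forces $w(\infty)\to\infty$). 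I expect this last continuity-and-range analysis, together with proving the strict inequality at the endpoint, to be the main obstacle; the high-dimensional case is comparatively soft.
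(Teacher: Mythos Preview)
Your proposal is correct and follows essentially the same route as the paper: the same auxiliary variable $w=-r^{d-1}S'$ (the paper calls it $v$), the same contradiction for $d>2$ via integrability of $(\ln\rho)'$ at infinity, and the same comparison $\phi\le\phi(0)$ plus shooting in $\rho(0)$ for $d=2$. Two small points: the paper streamlines the $d=2$ shooting by the compactifying change of variable $y=r^2/(r^2+1)\in[0,1)$, turning $\lim_{r\to\infty}w(r)$ into a finite endpoint value $u(1)$; and it does not establish monotonicity of $w(\infty)$ in $\rho(0)$ but relies only on continuity together with the endpoint behaviors $\liminf_{a\to 0}u(1)=4/\phi(0)$ and $\lim_{a\to\infty}u(1)=\infty$.
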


\begin{proof}
We use the unknown $v(r) = -  r^{d-1}S'(r) \geq 0$ in order to carry out the analysis.
The equation on $\rho$ in \eqref{rss:eq1} now reads $- \rho'(r) + \rho(r) S'(r)  \phi(|S'(r)|)  =0$, and we obtain
\beq \bepa \label{eq2}
v'= r^{d-1}  \rho(r) \geq 0,
\\[10pt]
\rho' = - \rho \; \frac{v(r)}{r^{d-1}} \phi \left(\frac{v(r)}{r^{d-1}} \right),\\[10pt]
v(0)=0,\ \rho(0)=a>0.
\eepa
\eeq
From (\ref{asF}) and the second equation of \eqref{eq2}, we get $\rho'\geq -\rho A_\infty$ and hence $\rho(r) \geq a e^{-A_\infty r}>0$ for all $r \in [0,\infty)$. Furthermore from the first equation of \eqref{eq2}, we know that $v$ is non-decreasing and has a limit as $r\to \infty$ which determines the total mass according to~\eqref{rss:eqm}. Hence for finite mass $M$, $v(r)$ has a finite limit and thus for $r$ large enough, say $r \geq r_0$ for some $r_0>0$, we have
\beq \label{master}
\big(\ln \rho \big)' =-\frac{v(r)}{r^{d-1}} \phi \left(\frac{v(r)}{r^{d-1}} \right) \geq - \frac{b}{r^{d-1}},
\eeq
for some positive constant $b>0$.

If $d>2$, integrating (\ref{master}) yields
$$\rho(r)\geq C e^{\frac{b}{d-2}\frac{1}{r^{d-2}}}.$$
This is incompatible with finite mass $M$ in \eqref{rss:eqm}.

In dimension $d=2$, we recover a phenomenon similar to the multiple solutions for the critical mass in the Keller-Segel system but explicit solutions are not available. The system (\ref{eq2}) reduces to
\begin{equation}\label{eqr:d2}
\left\{\begin{array}{ll}
r v'' = v' \left(1-v \phi(\frac{v}{r})\right), &\qquad r>0,  \\
v(0)=v'(0)=0.
\end{array}\right.
\end{equation}
By the boundary conditions in \eqref{eq2}, we see that positive solutions behave as $v(r) \approx \frac{a }{2}r^2$ for $r \approx 0$ and some constant $a>0$. Then, the proof of Theorem~\ref{th:radial} is a consequence of Lemma \ref{lem:nec} and Proposition \ref{prop:M} below.
Lemma \ref{lem:nec} states that a necessary condition of existence of radial solution is $M>\frac{8\pi}{\phi(0)}$.
Proposition \ref{prop:M} shows that for any finite mass $M$ larger than the critical mass $\frac{8\pi}{\phi(0)}$, there exist radial solutions with mass $M$ to system \eqref{ksf} with $\alpha=0$.
\end{proof}

\begin{lemma}\label{lem:nec}
Let $v$ be a positive solution to \eqref{eqr:d2}.  Then $v$ is increasing.
If $v$ is bounded, then
$$
\displaystyle \lim_{r\to +\infty} v(r) > \frac{4}{\phi(0)}.
$$
\end{lemma}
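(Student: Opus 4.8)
\emph{Overview of the strategy.} I would prove the two assertions in order. For monotonicity, the equation \eqref{eqr:d2} is a first-order linear ODE in $v'$, so $v'$ cannot change sign once it is positive. For the lower bound on $v_\infty:=\lim_{r\to\infty}v(r)$, assuming $v$ bounded, I would integrate \eqref{eqr:d2} once to obtain a logistic-type inequality $r\,v'(r)\ge v(r)\bigl(2-\tfrac{\phi(0)}{2}v(r)\bigr)$ (using only $\phi\le\phi(0)$ from \eqref{asF}), and then read off that $v_\infty\le 4/\phi(0)$ forces $v$ to grow like a positive power of $r$, a contradiction.

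\emph{Monotonicity.} By the behaviour $v(r)\sim\frac a2 r^2$ near $0$ (hence $v'(r)\sim ar>0$, $a>0$), $v'>0$ on some interval $(0,r_*]$. As long as $v'>0$, \eqref{eqr:d2} reads $(\ln v')'(r)=\dfrac{1-v(r)\,\phi(v(r)/r)}{r}$, whose right-hand side is continuous, hence bounded on every compact subinterval of $(0,\infty)$. If $v'$ vanished first at some $r_1>r_*$, integrating this identity on $(r_*,r_1)$ would make $\ln v'(r)$ converge to a finite limit as $r\to r_1^-$, contradicting $v'(r_1)=0$. Thus $v'>0$ on all of $(0,\infty)$, $v$ is increasing, and $v_\infty$ exists in $(0,+\infty]$; if $v$ is bounded it is finite.

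\emph{The logistic differential inequality.} Assume $v$ bounded. Write \eqref{eqr:d2} as $rv''=v'-v'v\,\phi(v/r)$ and split $v'v\,\phi(v/r)=\frac{\phi(0)}{2}(v^2)'-v'v\,[\phi(0)-\phi(v/r)]$. Integrating on $(0,R)$ — the boundary terms at $0$ vanish because $rv'(r)\sim ar^2\to0$ and $v(r)\to0$, and $\int_0^R rv''\,dr=Rv'(R)-v(R)$ — and using $\phi(0)-\phi(v/r)\ge0$ from \eqref{asF}, I obtain
\[
R\,v'(R)=2v(R)-\frac{\phi(0)}{2}v(R)^2+\mathcal E(R),\qquad \mathcal E(R):=\int_0^R v'(s)v(s)\,[\phi(0)-\phi(v(s)/s)]\,ds\ge 0,
\]
with $\mathcal E$ nondecreasing; in particular $R\,v'(R)\ge v(R)\bigl(2-\tfrac{\phi(0)}{2}v(R)\bigr)+\mathcal E(R)$. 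Now suppose $v_\infty\le 4/\phi(0)$. Then $v(R)\le 4/\phi(0)$ for all $R$, so $2-\tfrac{\phi(0)}{2}v(R)\ge0$ and $R\,v'(R)\ge\mathcal E(R)\ge0$. If $v_\infty<4/\phi(0)$, set $\delta:=1-\tfrac{\phi(0)}{4}v_\infty>0$; then $Rv'(R)\ge 2\delta\,v(R)$, i.e. $(\ln v)'(R)\ge 2\delta/R$, whence $v(R)\ge v(R_0)(R/R_0)^{2\delta}\to\infty$, contradicting boundedness. If $v_\infty=4/\phi(0)$, then $\mathcal E\not\equiv0$ is impossible (else $\mathcal E(R)\ge c>0$ for large $R$, so $v'(R)\ge c/R$ and $v(R)\to\infty$), while $\mathcal E\equiv0$ forces $\phi(v(s)/s)=\phi(0)$ for all $s>0$, absurd since $v(s)/s>0$ and $\phi(x)<\phi(0)$ for $x>0$ (true e.g. for the $\phi$ of \eqref{addn}, and whenever $\phi$ is non-constant near $0$). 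Hence $v_\infty>4/\phi(0)$.

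\emph{Main difficulty.} The crux is the passage $R\to\infty$ in the integrated identity: the crude bound $\phi\le\phi(0)$ only yields the non-strict inequality $v_\infty\ge 4/\phi(0)$, so the real work is to exclude the borderline value $v_\infty=4/\phi(0)$, which I do by retaining the sign-definite remainder $\mathcal E$ and invoking that flux limitation makes $\phi$ strictly below $\phi(0)$. A secondary technical point is the control of the boundary terms at $r=0$, where the asymptotics $v\sim\frac a2 r^2$ are used; this also underlies the monotonicity argument.
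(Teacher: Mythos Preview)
Your proof is correct and follows the paper's route: integrate \eqref{eqr:d2} once to obtain the exact identity $rv'=2v-\tfrac{\phi(0)}{2}v^2+\mathcal E(r)$ with the nonnegative remainder $\mathcal E$, deduce $v_\infty\ge 4/\phi(0)$ from the logistic inequality, and then use the remainder to upgrade to a strict bound. The only differences are in execution: for monotonicity the paper uses a one-line uniqueness argument (if $v'(r_0)=0$ then $v\equiv v(r_0)$ solves \eqref{eqr:d2}, a contradiction), and for strictness the paper first shows $rv'(r)\to 0$ (since $v_\infty\ge 4/\phi(0)$ gives $rv''\lesssim -3v'$ at infinity, hence $r^3v'$ bounded) and then passes to the limit in the identity to read off $\tfrac{\phi(0)}{2}v_\infty^2-2v_\infty=\mathcal E(\infty)>0$ directly, whereas you handle the borderline $v_\infty=4/\phi(0)$ by a case split on whether $\mathcal E\equiv 0$. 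Both endgames rely on $\mathcal E(\infty)>0$, i.e.\ on $\phi$ not being identically $\phi(0)$ on the range of $s\mapsto v(s)/s$; this is not literally contained in \eqref{asF} and the paper leaves it implicit, so your explicit caveat is appropriate.
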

\begin{proof}
We split the proof into three steps:
\begin{enumerate}
\item From the behaviour near $r=0$, we know that $v'(r) >0$ for $r>0$ small enough. If we had $v'(r_0)=0$ for some $r_0>0$, then the unique solution of \eqref{eqr:d2} is $v(r)= v(r_0)$ which is a contradiction. Therefore $v'(r) >0$ for all $r>0$.
\item Since $\phi(\cdot) \leq \phi(0)$ from \eqref{asF}, we deduce from \eqref{eqr:d2} that, for all $r>0$,
$$
 v'(1-v\phi(0)) \leq r v'' .
$$
This inequality may be rewritten as
\begin{equation}\label{ineqvprim1}
 v'(2-v\phi(0)) \leq   (r v')' .
\end{equation}
Integrating (\ref{ineqvprim1}) from $0$ to $r$ and using boundary conditions in (\ref{eqr:d2}), we deduce that
$$
2 v - \frac{\phi(0)}{2} v^2 \leq  r v'  .
$$
This inequality implies that
$$
\lim_{r\to \infty} v (r) \geq  \frac{4}{\phi(0)}
$$
because if it were smaller, we would have $v'(r) > c/r$ for some $c>0$ and $\frac 1 r$ is not integrable.

As a consequence, we know that as $r\to \infty$, $rv''\leq -3 v' $ and thus,  for some nonnegative constant $C$, it holds that
\beq
r^3 v'(r) \leq C, \ \ \mathrm{as} \ \ t \to \infty.
\label{rs:estdiff}
\eeq
\item  We may go further and write the first equation of (\ref{eqr:d2}) as
$$
 (r v')' =  v'(2-v\phi(0)) + v v' [\phi(0)- \phi(\frac vr )] .
$$
Integrating it from $0$ to $r$, we have
$$
r v' = 2 v -  \phi(0) \frac{v^2}{2} + Q(r), \qquad Q(r) = \int_0^r  v(s) v' (s) [\phi(0)- \phi(\frac{v(s)}{s} )]ds >0.
$$
Therefore as $r \to \infty$, using \eqref {rs:estdiff}, there holds that
$$
\phi(0) \frac{v_\infty ^2}{2} - 2 v_\infty =Q(\infty) >0.
$$
It implies that $v_\infty>\frac{4}{\phi(0)}$.
\end{enumerate}
\end{proof}

\begin{proposition}\label{prop:M}
Let the function $\phi$ satisfies \eqref{asF}.
Then for any $b> \frac{4}{\phi(0)}$, there exists a solution $v$ to \eqref{eqr:d2} such that $\lim_{r\to + \infty} v(r) = b$.

\end{proposition}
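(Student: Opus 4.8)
The plan is to run a shooting argument in the parameter $a=\rho(0)>0$, which labels the positive solutions of \eqref{eqr:d2} through the behaviour $v(r)\approx\frac a2 r^2$ near $r=0$. For fixed $a>0$ I would work with the equivalent first-order system \eqref{eq2}, in which $v_a'=r\rho_a\ge0$ and $\rho_a(r)=a\exp\!\big(-\int_0^r\frac{v_a(s)}{s}\phi(\frac{v_a(s)}{s})\,ds\big)$; since $v_a(s)/s$ stays bounded near $0$ (indeed $v_a(r)\le\frac a2 r^2$), a standard contraction/Gronwall argument on this integral form yields a unique global solution, the bounds $0\le\rho_a\le a$ and $0\le v_a(r)\le\frac a2 r^2$, monotonicity of $v_a$ (Lemma~\ref{lem:nec}), and continuous dependence $a\mapsto(v_a,v_a')$ in $C([0,R])$ for every $R>0$. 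Put $\beta(a):=\lim_{r\to\infty}v_a(r)\in(0,+\infty]$. Two facts are free: if $\beta(a)<\infty$ then $\beta(a)>\frac{4}{\phi(0)}$ (Lemma~\ref{lem:nec}); and $\rho_a'\ge-A_\infty\rho_a$ gives $\rho_a(r)\ge a e^{-A_\infty r}$, so $\beta(a)\ge a/A_\infty^2\to\infty$ as $a\to\infty$. It therefore suffices to show that $\beta$ is continuous as a map into $(0,+\infty]$ and that $\beta(a)\to\frac{4}{\phi(0)}$ as $a\to0^+$: being the continuous image of the interval $(0,\infty)$, the image of $\beta$ is then an interval contained in $(\frac{4}{\phi(0)},+\infty]$ that meets every $(\frac4{\phi(0)},\frac4{\phi(0)}+\varepsilon)$ and contains arbitrarily large values, hence contains $(\frac4{\phi(0)},+\infty)$, which is the assertion.

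The engine for the two remaining estimates is the identity obtained by integrating \eqref{eqr:d2}, exactly as in the proof of Lemma~\ref{lem:nec}: for every finite $r$,
\[
rv_a'(r)=2v_a(r)-\frac{\phi(0)}{2}v_a(r)^2+Q_a(r),\qquad Q_a(r)=\int_0^r v_a v_a'\,[\phi(0)-\phi(v_a/s)]\,ds\ \ge 0 .
\]
From $rv_a'\ge0$, $Q_a(r)\ge\frac{\phi(0)}2 v_a(r)^2-2v_a(r)$; on the other hand $v_a(s)\le\min(\frac a2 s^2,v_a(r))$ for $s\le r$ gives $v_a(s)/s\le\sqrt{a\,v_a(r)/2}$, so $Q_a(r)\le\omega\!\big(\sqrt{a\,v_a(r)/2}\big)\,\frac12 v_a(r)^2$, with $\omega(t)=\sup_{[0,t]}(\phi(0)-\phi)$ the modulus of continuity of $\phi$ at $0$. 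Comparing the two, whenever $v_a(r)\ge 8/\phi(0)$ one must have $\omega(\sqrt{a\,v_a(r)/2})\ge\phi(0)/2$, i.e. $a\,v_a(r)\ge 2\eta_0^2$ with $0<\eta_0:=\inf\{t>0:\omega(t)\ge\phi(0)/2\}<\infty$. Hence, for $a<a_*:=\eta_0^2\phi(0)/4$, the value $v_a(r)$ can never fall in the nonempty interval $[8/\phi(0),2\eta_0^2/a)$; since $v_a$ is continuous with $v_a(0)=0$, it stays below $8/\phi(0)$, so $\beta(a)\le 8/\phi(0)$. Letting $r\to\infty$ in the identity (the left side tends to $0$, as established in the proof of Lemma~\ref{lem:nec}, and $\int_0^\infty v_a v_a'=\frac12\beta(a)^2$) gives $\frac{\phi(0)}2\beta(a)^2-2\beta(a)=Q_a(\infty)\le\omega(\sqrt{4a/\phi(0)})\cdot\frac{32}{\phi(0)^2}\to0$; since $\beta(a)\in(\frac4{\phi(0)},\frac8{\phi(0)}]$ for $a<a_*$, this forces $\beta(a)\to\frac4{\phi(0)}$ as $a\to0^+$.

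It remains to prove continuity of $\beta$. Lower semicontinuity is immediate, since $\beta(a)\ge v_a(R)\to v_{a_0}(R)$ for every $R$, whence $\liminf_{a\to a_0}\beta(a)\ge\sup_R v_{a_0}(R)=\beta(a_0)$. The step I expect to be the main obstacle is upper semicontinuity at a point $a_0$ with $\beta(a_0)<\infty$, which requires uniform control of the tail of $v_a$. Here I would use that $v_{a_0}(r)\phi(v_{a_0}(r)/r)\to\beta(a_0)\phi(0)>4$ as $r\to\infty$ to fix a large $r_1$ with $1-v\,\phi(v/r)<-2$ for all $r\ge r_1$ and all $v$ in a neighbourhood of $[v_{a_0}(r_1),\beta(a_0)]$ and, simultaneously, with $r_1 v_{a_0}'(r_1)$ as small as desired (it tends to $0$, as shown in the proof of Lemma~\ref{lem:nec}). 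A continuity/trapping argument on $[r_1,\infty)$ then forces $v_a'$ to decay at least like $r^{-2}$ past $r_1$, so that $v_a(r)-v_a(r_1)\le r_1 v_a'(r_1)$ there; combined with $v_a(r_1)\to v_{a_0}(r_1)<\beta(a_0)$ and $v_a'(r_1)\to v_{a_0}'(r_1)$, this gives $\beta(a)=\sup_r v_a(r)\le\beta(a_0)+\varepsilon$ for $a$ close to $a_0$. Where $\beta(a_0)=+\infty$, continuity follows from lower semicontinuity. With $\beta$ continuous, the argument of the first paragraph delivers, for every $b>\frac4{\phi(0)}$, an $a$ with $\beta(a)=b$; the corresponding $v_a$ is the desired solution.
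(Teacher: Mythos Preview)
Your argument is correct and follows the same overall shooting strategy as the paper, but the implementation differs in every step. The paper compactifies via $y=\frac{r^2}{r^2+1}$ and works with the transformed ODE on $[0,1)$; you stay on the half-line and exploit the integrated identity $rv_a'=2v_a-\tfrac{\phi(0)}{2}v_a^2+Q_a(r)$ directly. For the small-$a$ limit, the paper runs a bootstrap (first bound $u$ using the crude $\phi_m$, then show the argument of $\phi$ is uniformly small, then redo the estimate with $\phi(0)-\varepsilon$); your route through $v_a(s)/s\le\sqrt{av_a(r)/2}$ and the modulus of continuity $\omega$ is more direct and gives the forbidden-interval/trapping conclusion in one pass. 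For the large-$a$ limit, the paper uses $\phi(u)\le A_\infty/u$ inside a differential inequality, while your bound $\rho_a\ge a e^{-A_\infty r}$ (hence $\beta(a)\ge a/A_\infty^2$) is shorter. Finally, the paper simply asserts that $a\mapsto u(1)$ is continuous, whereas you actually supply the missing ingredient: a quantitative tail estimate past some $r_1$ (via $1-v\phi(v/r)<-2$ forcing $(r^2v_a')'<0$) that yields upper semicontinuity of $\beta$ at points where $\beta(a_0)<\infty$. In that respect your proof is more complete than the paper's; the trade-off is that the paper's compactification makes the endpoint value $u(1)$ a more tangible object, while you must manage the tail explicitly.
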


\begin{proof}
We want to prove that for any $b>\frac{4}{\phi(0)}$, there exists $a>0$ such that the solution
to \eqref{eqr:d2} verifying $v''(0)=a$ and $\lim_{r\to + \infty} v(r) = b$.
We first simplify the problem by introducing the change of variable $y=\frac{r^2}{r^2+1}\in [0,1)$.
Setting $u(y)=v(r)$, we deduce, from straightforward computations, that $u$ is a solution to the system
\begin{equation}\label{equy}
\left\{\begin{array}{ll}
u'' = \frac{2 u'}{1-y} \left(1-\frac{u}{4y} \phi\Big(\sqrt{\frac{1-y}{y}} u\Big)\right), &\qquad y\in (0,1),  \\[3mm]
u(0)=0, \quad u'(0)=\frac{a}{2}.
\end{array}\right.
\end{equation}
We are left to use a shooting method to show there is a number $a>0$ such that (\ref{equy}) has a solution satisfying $u(1)=\lim_{y\to 1} u(y)=b$ for any $b> \frac{4}{\phi(0)}$.

\begin{itemize}
\item By definition of $u$ and thanks to the above results, we have that $u\geq 0$ and $u'\geq 0$ on $[0,1)$.
\item Since $\phi(\cdot)\leq \phi(0)$, we deduce from \eqref{equy} that
$$
(y(1-y)u')' \geq -\frac{u u'}{2} \phi(0) + u'.
$$
After integration we obtain
$$
y(1-y) u' \geq - \frac{u^2}{4} \phi(0) + u.
$$
Thus, when $u(y)\leq \frac{4}{\phi(0)}$, we have
$$
\frac{u'(y)}{u(y)-u^2(y) \phi(0)/4} \geq \frac{1}{y(1-y)}.
$$
Upon integration, we find a positive constant $\lambda>0$ such that for all $y\in(0,1)$ and $u(y)\leq \frac{4}{\phi(0)}$, we have
$$
u(y) \geq \frac{4\lambda y}{1-y+\lambda \phi(0) y} \ \to \ \frac{4}{\phi(0)} \ \mathrm{as} \ y \to 1.
$$
Thus, by continuity and the fact $u$ is increasing, we have that $u(1)\geq \frac{4}{\phi(0)}$.

\item Let us prove that: For any $b>\frac{4}{\phi(0)}$, there exists a number $a>0$ small enough such that
the solution to \eqref{equy} satisfies $u(1)\leq b$. \\
In the vicinity of $0$, we have $u(y)\sim \frac{a}{2} y$. Then for $a>0$ small enough, there exists $y_0\in(0,1)$ such that $u(y_0)=a y_0$.
(Indeed, if it is not true, we will have $u(y)\leq a y$ on $(0,1)$, which is not possible for $a<\frac{4}{\phi(0)}$ since $u(1)\geq \frac{4}{\phi(0)}$).
The function $y\mapsto \sqrt{\frac{1-y}{y}} u(y)$ being bounded on $(0,1)$, let us denote
$\phi_m = \min_{y\in(0,1)} \phi\Big(\sqrt{\frac{1-y}{y}} u(y)\Big)$.
By the same token as above, we deduce from \eqref{equy} that
$$
(y(1-y)u')' \leq -\frac{u u'}{2} \phi_m + u'.
$$
Integrating above inequality over $(0,y)$ gives
\begin{equation}\label{ieq}
y(1-y)u'\leq -\frac{u^2}{2}\phi_m+u.
\end{equation}
On one hand, integrating (\ref{ieq}) from $\frac 12$ to $y$, we get
$$
u(y) \leq \frac{2 u(\frac 12) y}{2(1-y)(1-\frac{\phi_m}{4}u(\frac 12)) + \frac{\phi_m}{2}u(\frac 12) y}\leq C_m y.
$$
On the other hand, integrating (\ref{ieq}) between $y_0$ and $y$, we obtain
$$
u(y) \leq \frac{u(y_0) y/y_0}{\frac{1-y}{1-y_0}(1-\frac{u(y_0)}{4}\phi_m) + \frac{\phi_m}{4}u(y_0) \frac{y}{y_0}}
= \frac{ a y(1-y_0)}{1-y + \frac{\phi_m}{4} a (y-y_0)}.
$$
Then,
$$
\sqrt{\frac{1-y}{y}} u(y) \leq \xi=
\min\left(C_m\sqrt{y(1-y)},\frac{ a \sqrt{y(1-y)}(1-y_0)}{1-y + \frac{\phi_m}{4} a (y-y_0)}\right).
$$
It is clear that $\xi \to 0$ as $a \to 0$ for all $y\in [0,1]$. Let $\varepsilon>0$ small. Then by the continuity of the function $\phi$, for $a>0$ small enough, we can deduce from the above estimate that
$\phi\big(\sqrt{\frac{1-y}{y}} u(y)\big) \geq \phi(0)- \varepsilon$.
Then, we can redo the same estimate as above, replacing $\phi_m$ by $\phi(0)- \varepsilon$, we arrive at
$$
u(y) \leq
\frac{ a y(1-y_0)}{1-y + \frac{\phi(0)-\varepsilon}{4} a (y-y_0)},
$$
which implies by taking $y=1$,
$$
u(1) \leq b=: \frac{4}{\phi(0)-\varepsilon}.
$$

\item Let us prove that $\lim_{a\to +\infty} u(1)=+\infty$.

By the second assumption on $\phi$ in \eqref{asF}, we know that for any $u>0$ and $y\in(0,1)$,
$$
\phi\Big(\sqrt{\frac{1-y}{y}} u \Big) \leq \frac{A_\infty}{u}\sqrt{\frac{y}{1-y}}.
$$
Since $u'\geq 0$, we get from \eqref{equy} that
$$
u'' \geq \frac{2 u'}{1-y} \left(1 - \frac{A_\infty}{4\sqrt{y(1-y)}}\right).
$$
We may integrate this inequality between $0$ and $y$ for $y\in (0,1)$, and get
$$
\ln u'(y) - \ln (\frac{a}{2}) \geq -2 \ln (1-y)
-\frac{A_\infty}{2}\int_0^y \frac{dz}{\sqrt{z}(1-z)^{3/2}}.
$$
We deduce that for any $y\in[0,\frac 12]$,
$$
u'(y) \geq \frac{a}{2}
\exp\left(-\frac{A_\infty}{2}\int_0^{1/2} \frac{dz}{\sqrt{z}(1-z)^{3/2}}\right).
$$
Hence the integration of last inequality from $0$ to $\frac{1}{2}$ yields
$$
u\Big(\frac{1}{2}\Big) - u(0) \geq
\frac{a}{4}\exp\left(-\frac{A_\infty}{2}\int_0^{1/2} \frac{dz}{\sqrt{z}(1-z)^{3/2}}\right)\underset{a\to +\infty}{\longrightarrow} +\infty.
$$
Since $u$ is nondecreasing, we have
$u(1)\geq u\Big(\frac{1}{2}\Big)$. This implies that $\lim_{a\to +\infty} u(1)=+\infty$.

\item We are now in a position to conclude the proof.
The function $a\mapsto u(1)$ is continuous. We have proved that $\underset{a\to 0}{\lim\inf}\  u(1) = \frac{4}{\phi(0)}$ and $\underset{a\to +\infty}{\lim} u(1) = +\infty$.
Thus for any $b>\frac{4}{\phi(0)}$ there exists $a>0$ such that the solution to \eqref{equy} verifies $u(1) = b$. This completes the proof.
\end{itemize}
\end{proof}


\section{One dimensional case}\label{sec:1D}

In one dimension, we can improve the above results and show the existence and uniqueness of a steady state for any finite $M>0$ and the convergence (in Wasserstein distance) of the solution $\rho(t)$ towards this unique steady state as $t\to +\infty$.
Let us consider system (\ref{ksf}) when $\alpha=0$ and $\tau=0$ in one dimension:
\begin{align}
&\pa_t \rho - \pa_{xx} \rho + \pa_x (\rho \phi(|\pa_x S|)\pa_x S) = 0,
\label{eq1D:n}  \\
&- \pa_{xx} S = \rho,
\label{eq1D:S}  \\
& \rho(t=0) = \rho^0 \in L^1_+(\R), \qquad \|\rho^0\|_{L^1} = M >0.
\label{eq1D:n0}
\end{align}
We assume that $\phi\in C^1(\R^+;\R^+)$ and $\phi$ verifies assumption \eqref{asF}.

In order to reduce the problem, we define $u=-\pa_x S$, such that $\rho=\pa_x u$.
We notice that
$-\rho u\phi(|u|) = -\pa_x \Phi(u)$, where $\Phi$ is an antiderivative of $x\mapsto x\phi(|x|)$. Remark that $\Phi$ is even and nondecreasing on $\R^+$.
As a consequence, system \eqref{eq1D:n}--\eqref{eq1D:S} reduces to
\begin{equation}\label{eq1D:u}
\pa_t u - \pa_{xx} u - \pa_x \Phi(u) = 0, \qquad u(t=0,x)=u^0(x) := \int_{-\infty}^x \rho^0(y)\,dy - \frac{M}{2}.
\end{equation}
We assume moreover that $|u^0|-\frac{M}{2} \in L^1(\R)$.

As it is now standard for parabolic equation, we may prove easily the following existence result:
\begin{lemma}\label{lemu}
Let $M>0$ be given and let us assume that $\rho^0\in L^1_+(\R)$.
Then, there exists a unique solution $u$
to \eqref{eq1D:u} which satisfies
$$
0 \leq \pa_x u \in L^\infty((0,+\infty);L^1(\R)), \quad u(0)=0,
\quad \lim_{x \to \pm \infty} u(t,x)=\pm \frac{M}{2}.
$$
If we assume moreover that $|u^0|-\frac{M}{2} \in L^1(\R)$, then we also have
that $|u|-\frac{M}{2} \in L^\infty((0,+\infty);L^1(\R))$.
\end{lemma}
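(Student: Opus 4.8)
The plan is to read \eqref{eq1D:u} as a viscous scalar conservation law $\partial_t u - \partial_{xx} u - \partial_x \Phi(u) = 0$ with a globally Lipschitz flux: indeed $\Phi'(y) = y\phi(|y|)$, so $\|\Phi'\|_{L^\infty(\R)} \le A_\infty$ by \eqref{asF}, and $\Phi$ is even and $C^1$; the datum $u^0(x)=\int_{-\infty}^x\rho^0 - M/2$ takes values in $[-M/2,M/2]$, is nondecreasing, has $\partial_x u^0=\rho^0\in L^1_+$, and $u^0(x)\to\pm M/2$ as $x\to\pm\infty$. The one structural subtlety is that $u$ itself is not integrable (its limits at $\pm\infty$ are nonzero), so the $L^1$ theory must be applied to differences --- $u-u^0$ and $u_1-u_2$ --- which \emph{are} integrable precisely because these limits are prescribed. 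I would split the proof into (i) existence together with the structural properties, by smooth approximation; (ii) uniqueness, by the $L^1$-contraction principle; (iii) the last integrability statement, by propagation of $L^1$-tails.

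(i) Approximate $\rho^0$ in $L^1(\R)$ by $0\le\rho^0_n\in C_c^\infty$ with $\int\rho^0_n=M$, and set $u^0_n=\int_{-\infty}^x\rho^0_n-M/2$. Since $\Phi$ is globally Lipschitz, the Cauchy problem \eqref{eq1D:u} with datum $u^0_n$ has a unique global classical solution $u_n$; comparison with the constant solutions $\pm M/2$ gives $-M/2\le u_n\le M/2$; differentiating, $\rho_n:=\partial_x u_n$ solves the linear parabolic equation $\partial_t\rho_n=\partial_{xx}\rho_n+\partial_x(\Phi'(u_n)\rho_n)$ with bounded coefficient and nonnegative datum, hence $\rho_n\ge 0$; and integrating this equation over $\R$ (its flux $\partial_x\rho_n+\Phi'(u_n)\rho_n$ decays at $\pm\infty$) gives $\int_\R\rho_n(t,x)\,dx=M$, i.e. $\|\partial_x u_n(t)\|_{L^1}=M$ and, with the $L^\infty$ bound, $u_n(t,\pm\infty)=\pm M/2$. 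Thus $\{u_n(t,\cdot)\}$ is bounded in $L^\infty$ with total variation $M$; combined with $L^1_{\rm loc}$-in-$x$, $L^1$-in-$t$ equicontinuity (from $L^1$-stability of the semigroup, or from $\partial_t u_n=\partial_x(\rho_n+\Phi(u_n))$), Helly's theorem / an Aubin--Lions--Simon argument gives a subsequence converging in $C([0,T];L^1_{\rm loc})$ and a.e. to some $u$; since $\Phi$ is continuous and bounded one passes to the limit in the weak formulation, and $u$ inherits $0\le\partial_x u\in L^\infty((0,\infty);L^1)$. The $L^1$-contraction of step (ii), applied to $u_n$ versus $u_m$ with $\|u^0_n-u^0_m\|_{L^1}\to 0$, shows $u_n-u^0_n\to u-u^0$ in $C_tL^1$, which transmits $u(t,\pm\infty)=\pm M/2$ and $\|\partial_x u(t)\|_{L^1}=M$ to the limit; and $u\in C([0,\infty);C_b(\R))$, so the initial datum is attained.

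(ii) If $u_1,u_2$ are two solutions in this class, $w:=u_1-u_2$ lies in $C([0,\infty);L^1(\R))$ (equal far fields), $w(0)=0$, and solves $\partial_t w-\partial_{xx}w-\partial_x(b\,w)=0$ with $b=\int_0^1\Phi'(su_1+(1-s)u_2)\,ds$, $\|b\|_{L^\infty}\le A_\infty$. Kato's inequality $\partial_t|w|-\partial_{xx}|w|-\partial_x(b|w|)\le 0$, integrated over $\R$, gives $\frac{d}{dt}\|w(t)\|_{L^1}\le 0$, hence $w\equiv 0$ (the integrations by parts are licit because $w$ solves a uniformly parabolic equation with bounded coefficients and $L^1$ datum, so it is smooth with Gaussian decay).

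(iii) Write $|u|-M/2=\big(|u|-|u^0|\big)+\big(|u^0|-M/2\big)$; the second term is in $L^1$ by assumption and $\big||u|-|u^0|\big|\le|u-u^0|$, so it remains to bound $u-u^0$ in $L^\infty((0,\infty);L^1)$. This is a tail-propagation estimate: one controls the first moment $\int_\R|x|\,\rho(t,x)\,dx$ (which, using $\partial_x u=\rho\ge0$ and $u(t,\pm\infty)=\pm M/2$, dominates $\| |u(t,\cdot)|-M/2\|_{L^1}$) by testing the $\rho$-equation against $\mathrm{sgn}(x)$, and using $\|\Phi'\|_{L^\infty}\le A_\infty$ together with the fact that the drift velocity $-\Phi'(u)=-u\phi(|u|)$ points towards the origin ($\Phi$ nondecreasing on $\R^+$), so that mass is confined rather than spread; the boundary contribution at $x=0$ is harmless because $\rho(t,\cdot)$ is bounded for positive times. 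I expect this moment-propagation step to be the only genuinely delicate point --- it is where the flux-limitation/aggregation structure is actually used, the analogous statement being false for the bare heat equation --- while everything else is routine parabolic theory.
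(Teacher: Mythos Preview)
The paper does not actually prove this lemma; it declares the result ``standard for parabolic equation'' and moves on. Your outline for parts (i) and (ii) is correct and is precisely the standard route: \eqref{eq1D:u} is a viscous scalar conservation law with globally Lipschitz flux ($\|\Phi'\|_\infty\le A_\infty$), the datum is monotone and connects the two constant equilibria $\pm M/2$, and the $L^1$-contraction/comparison machinery gives existence, uniqueness, preservation of monotonicity, and $\|\pa_x u(t)\|_{L^1}=M$.

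Part (iii), however, has a genuine gap. Testing the $\rho$-equation against $|x|$ (so that $\mathrm{sgn}(x)$ appears after one integration by parts) yields, with the normalisation $\Phi(0)=0$,
\[
\frac{d}{dt}\int_{\R}|x|\,\rho(t,x)\,dx \;=\; 2\rho(t,0)\;-\;2\big(\Phi(\tfrac{M}{2})-\Phi(u(t,0))\big).
\]
The second term is indeed nonpositive --- this is your ``confinement'' observation --- but the diffusive boundary term $2\rho(t,0)$ is only known to be \emph{bounded} (Theorem~\ref{Global}); it is neither integrable in time nor a~priori dominated by the drift contribution. Your argument therefore gives at best linear-in-$t$ growth of the first moment, not the uniform bound $L^\infty((0,+\infty);L^1)$ the lemma claims. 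You correctly note that the statement is false for the bare heat equation; what is missing is a \emph{quantitative} reason why aggregation beats diffusion, and ``$\rho(t,0)$ is bounded'' is not enough.

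A clean way to close (iii) is to reuse the $L^1$-contraction from your step (ii), but against the steady state $\bar u$ of Lemma~\ref{lemubar} (whose construction is an ODE argument, logically independent of the present lemma). Since $\bar u'=\Phi(M/2)-\Phi(\bar u)$, the profile $\bar u$ approaches $\pm M/2$ exponentially, so $\tfrac{M}{2}-|\bar u|\in L^1(\R)$; together with the hypothesis $\tfrac{M}{2}-|u^0|\in L^1$ this gives $u^0-\bar u\in L^1$, and then
\[
\Big\|\tfrac{M}{2}-|u(t)|\Big\|_{L^1}
\;\le\;\|u(t)-\bar u\|_{L^1}+\Big\|\tfrac{M}{2}-|\bar u|\Big\|_{L^1}
\;\le\;\|u^0-\bar u\|_{L^1}+\Big\|\tfrac{M}{2}-|\bar u|\Big\|_{L^1}
\]
uniformly in $t$.
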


\subsection{Steady state}

We now investigate the existence of steady state for the system in one dimension:
\begin{lemma}\label{lemubar}
Let $M>0$ be fixed.
There exists a unique steady state $\bar{u}$ for \eqref{eq1D:u}
which satisfies $\bar{u}(0)=0$, $0\leq \pa_x\bar{u} \in L^1(\R)$ and
$\|\pa_x\bar{u}\|_{L^1(\R)} = M$.
\end{lemma}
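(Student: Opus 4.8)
The plan is to reduce the steady-state problem to an ODE initial value problem and study the shooting parameter, exactly in the spirit of the radial analysis in Section~\ref{sec:steady}, but in a setting where monotonicity makes the argument cleaner. A stationary solution of \eqref{eq1D:u} satisfies $-\pa_{xx}\bar u - \pa_x\Phi(\bar u)=0$, i.e. $\pa_x\bar u + \Phi(\bar u) = \text{const}$. Since we want $\pa_x\bar u\in L^1(\R)$ and $\bar u\to\pm M/2$ at $\pm\infty$, and $\Phi$ is even, the constant must be $\Phi(M/2)$; moreover by uniqueness for the scalar ODE $\bar u$ is odd, so it is enough to solve on $[0,\infty)$ with $\bar u(0)=0$. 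Thus $\bar u$ solves
\begin{equation*}
\pa_x\bar u = \Phi(M/2) - \Phi(\bar u), \qquad \bar u(0)=0,
\end{equation*}
where the right-hand side is positive for $0\le \bar u < M/2$ (because $\Phi$ is even and strictly increasing on $\R^+$, the latter following from $s\phi(|s|)\ge 0$ and, near any point where it could vanish, a short argument using $\phi(0)>0$ so that $\Phi$ is strictly increasing away from $0$; in fact $\Phi(s)-\Phi(0)=\int_0^s \sigma\phi(\sigma)\,d\sigma$ is strictly increasing on $\R^+$). Hence the autonomous ODE has a unique maximal solution, it is increasing, and it converges monotonically to the equilibrium $M/2$ as $x\to\infty$; that it actually reaches $M/2$ only at $x=+\infty$ and does so with $\pa_x\bar u$ integrable is immediate since $\int_0^{M/2} \frac{d\bar u}{\Phi(M/2)-\Phi(\bar u)} = +\infty$ (the integrand blows up like $1/(M/2-\bar u)$ near the endpoint because $\Phi'(M/2)=\tfrac{M}{2}\phi(M/2)>0$ when $M>0$), while $\int_0^\infty \pa_x\bar u\,dx = \bar u(\infty)-\bar u(0) = M/2$, so on the whole line $\|\pa_x\bar u\|_{L^1}=M$.

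More precisely, I would organize the proof as: (i) derive the first integral $\pa_x\bar u+\Phi(\bar u)\equiv c$ and fix $c=\Phi(M/2)$ from the boundary conditions at infinity, recording that $\pa_x\bar u\ge 0$; (ii) by the oddness reduction, set up the IVP on $[0,\infty)$ and invoke Picard–Lindelöf (the vector field $\bar u\mapsto \Phi(M/2)-\Phi(\bar u)$ is $C^1$ since $\phi\in C^1$) to get local existence/uniqueness; (iii) show the solution is global and increasing with $\bar u(x)\uparrow M/2$, using that $M/2$ is the only zero of the vector field in $[0,M/2]$ and that the solution cannot cross it; (iv) verify $\pa_x\bar u\in L^1$ and $\|\pa_x\bar u\|_{L^1(\R)}=M$ via the fundamental theorem of calculus, and $\bar u(0)=0$ by construction; (v) uniqueness of the whole steady state: any stationary solution in the claimed class has $\pa_x\bar u+\Phi(\bar u)$ constant, the constant is forced by the limits, and two solutions of the same autonomous first-order ODE with the same value $0$ at $x=0$ coincide.

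The only genuinely delicate point is ensuring that the right-hand side $\Phi(M/2)-\Phi(\bar u)$ is \emph{strictly} positive on $[0,M/2)$ rather than merely nonnegative — this is what guarantees both that the trajectory does not stall before reaching $M/2$ and, conversely via the divergence of $\int^{M/2} d\bar u/(\Phi(M/2)-\Phi(\bar u))$, that it takes infinite ``time'' $x$ to arrive, which is exactly the behaviour needed for $\bar u\to M/2$ at infinity with integrable derivative. Strict positivity follows from $\Phi(s)=\Phi(0)+\int_0^s\sigma\phi(\sigma)\,d\sigma$ being strictly increasing on $\R^+$: the integrand $\sigma\phi(\sigma)$ is $\ge 0$, and is $>0$ for $\sigma>0$ small because $\phi(0)>0$ and $\phi$ is continuous, so $\Phi$ is strictly increasing near $0$; on any further interval where $\phi$ might vanish one still gets strict monotonicity by adding the contribution from the initial segment. (If one prefers to avoid even this mild discussion, it suffices to note $\phi>0$ on $\R^+$ from $\phi\in C^1(\R^+;\R^+)$ together with $\phi(0)>0$, in which case $\sigma\phi(\sigma)>0$ for all $\sigma>0$ and $\Phi$ is strictly increasing outright.) Everything else is the standard phase-line analysis of a scalar autonomous ODE approaching a hyperbolic equilibrium.
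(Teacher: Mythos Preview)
Your proposal is correct and follows essentially the same route as the paper: both integrate the stationary equation once to obtain the scalar IVP $\bar u'=\Phi(M/2)-\Phi(\bar u)$, $\bar u(0)=0$, and then use standard autonomous ODE arguments (uniqueness, monotonicity, the equilibria $\pm M/2$ acting as barriers) to conclude existence, the correct limits at $\pm\infty$, and $\|\pa_x\bar u\|_{L^1}=M$. The paper additionally records the closed form $\bar u(x)=A^{-1}(x+A(0))$ with $A$ an antiderivative of $u\mapsto 1/(\Phi(M/2)-\Phi(u))$, which is used later in the entropy argument; your remark that $\phi\in C^1(\R^+;\R^+)$ and $\phi(0)>0$ force $\phi>0$ everywhere is not literally justified by the hypotheses, but the paper makes the same tacit use of strict monotonicity of $\Phi$, so this is not a divergence from the intended argument.
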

\begin{proof}
The steady states are given by
$$
\pa_{xx} \bar{u} = -\pa_x \Phi(\bar{u}).
$$
Since $\Phi$ is defined up to a constant, this problem is invariant by translation, thus we may fix $\bar{u}(0)=0$.
Integrating the latter equation, taking into account the boundary condition at infinity, the steady state is a solution to the Cauchy problem
\begin{equation}\label{EDOstat}
\bar{u}'(x) = \Phi(\frac{M}{2})-\Phi(\bar{u}), \qquad \bar{u}(0) = 0.
\end{equation}
We recall that $\Phi$ is even.
The constant functions $\pm\frac{M}{2}$ being clearly solutions to this differential equation, but not satisfying the boundary condition, we have by uniqueness that the function $\bar{u}$ never reaches the values $\pm \frac{M}{2}$. Then, $|\bar{u}|<\frac{M}{2}$. It implies, by the assumptions \eqref{asF} on $\phi$ that $\bar{u}'>0$.
Thus $\lim_{x\to \pm \infty} \bar{u}$ exists and is finite.
Since from \eqref{EDOstat}, we have $\bar{u}(x) = \int_0^x (\Phi(\frac{M}{2})-\Phi(\bar{u}(y)))\,dy$, we deduce that $y\mapsto \Phi(\frac{M}{2}) - \Phi(\bar{u}(y))$ is integrable on $\R$. Necessarily $\lim_{y\to \pm \infty} (\Phi(\frac{M}{2}) - \Phi(\bar{u}(y))) = 0$.
Since $\Phi$ is even and nondecreasing on $\R^+$, we deduce that $\lim_{x\to \pm \infty} \bar{u}(x) = \pm \frac{M}{2}$ and $\int_{\R} \bar{u}'(x)\,dx = M$.
Let us denote
\begin{equation}\label{defA}
A \mbox{ is an antiderivative of } u\mapsto \frac{1}{\Phi(\frac{M}{2})-\Phi(u)}.
\end{equation}
It is an increasing function, thus it is invertible.
Therefore the solution to the Cauchy problem \eqref{EDOstat} is given by
$$
\bar{u}(x) = A^{-1}(x + A(0)).
$$
\end{proof}

\subsection{Asymptotic behaviour}

\begin{proposition}\label{limu}
Let $u$ and $\bar{u}$ be as in Lemma \ref{lemu} and \ref{lemubar} respectively.
For $A$ defined as in \eqref{defA}, we introduce
$$
E(t) := \int_{\R} \int_{\bar{u}}^u (A(v)-A(\bar{u}))\,dvdx \geq 0.
$$
Then we have the estimate
\begin{equation}\label{estim}
\frac{d}{dt} E(t) =
- \int_\R \big(\Phi(\frac{M}{2})-\Phi(u)\big) \big|\pa_x (A(u)-A(\bar{u}))\big|^2 \,dx
\leq 0.
\end{equation}

If we assume moreover that the initial data $u_0$ is such that $E(0)<\infty$, then
$$
\lim_{t\to + \infty} E(t) = 0.
$$
\end{proposition}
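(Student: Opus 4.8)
The plan is to obtain the dissipation identity in \eqref{estim} by a direct computation and then to deduce $E(t)\to 0$ by a LaSalle-type argument.

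\emph{Step 1: the dissipation identity.} Write $E(t)=\int_\R G\big(x,u(t,x)\big)\,dx$ with $G(x,\xi):=\int_{\bar u(x)}^{\xi}\big(A(\eta)-A(\bar u(x))\big)\,d\eta$, so that $\partial_\xi G(x,\xi)=A(\xi)-A(\bar u(x))$ and, since only the upper endpoint depends on $t$,
\[
\frac{d}{dt}E(t)=\int_\R \big(A(u)-A(\bar u)\big)\,\partial_t u\,dx .
\]
Because $\bar u$ is a steady state of \eqref{eq1D:u}, its flux is constant; evaluating it at $x\to\pm\infty$ (where $\partial_x\bar u\to 0$, $\bar u\to\pm\frac{M}{2}$, and $\Phi$ is even) gives $\partial_x\bar u+\Phi(\bar u)\equiv\Phi(\frac{M}{2})$, hence $\partial_t u=\partial_x\big(\partial_x u+\Phi(u)-\Phi(\frac{M}{2})\big)$. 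Integrating by parts — the boundary terms at $\pm\infty$ vanish since $A(u)-A(\bar u)\to 0$ and $\partial_x u+\Phi(u)-\Phi(\frac{M}{2})\to 0$ there, by the $L^1$ bounds of Lemma \ref{lemu} combined with parabolic smoothing — we obtain $\frac{d}{dt}E(t)=-\int_\R\partial_x(A(u)-A(\bar u))\big(\partial_x u+\Phi(u)-\Phi(\frac{M}{2})\big)\,dx$. Now \eqref{defA} gives $\partial_x A(u)=\partial_x u/(\Phi(\frac{M}{2})-\Phi(u))$, while \eqref{EDOstat} gives $\partial_x A(\bar u)=\bar u'/(\Phi(\frac{M}{2})-\Phi(\bar u))\equiv 1$; hence $\partial_x u+\Phi(u)-\Phi(\frac{M}{2})=(\Phi(\frac{M}{2})-\Phi(u))\,\partial_x(A(u)-A(\bar u))$, and substituting this yields \eqref{estim}. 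The sign is clear: $\Phi$ is even and nondecreasing on $\R^+$ and $|u|\le\frac{M}{2}$ by Lemma \ref{lemu}, so $\Phi(\frac{M}{2})-\Phi(u)\ge 0$.

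\emph{Step 2: compactness and identification of the limit.} Assume now $E(0)<\infty$. Then $E$ is nonincreasing and bounded below, so $E(t)\downarrow E_\infty\ge 0$; writing $D(t)\ge 0$ for the dissipation on the right-hand side of \eqref{estim}, integration in time gives $\int_0^\infty D(s)\,ds=E(0)-E_\infty<\infty$, so there is $t_n\to\infty$ with $D(t_n)\to 0$. The uniform bounds of Lemma \ref{lemu} ($0\le\partial_x u(t)\in L^1$ with $\|\partial_x u(t)\|_{L^1}\le M$, $\|u(t)\|_\infty\le\frac{M}{2}$, $\||u(t)|-\frac{M}{2}\|_{L^1}$ bounded) together with interior parabolic estimates give, along a subsequence, $u(t_n,\cdot)\to u_\infty$ locally uniformly and a.e., with $u_\infty$ nondecreasing, $|u_\infty|\le\frac{M}{2}$, and $u_\infty(\pm\infty)=\pm\frac{M}{2}$ (no mass escapes to infinity, by the uniform $L^1$ tail control). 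A weighted lower-semicontinuity argument on compact sets where $\Phi(\frac{M}{2})-\Phi(u_\infty)$ is bounded below — recall $\phi>0$ makes $\Phi$ strictly increasing — then forces $\partial_x u_\infty+\Phi(u_\infty)=\Phi(\frac{M}{2})$ wherever $|u_\infty|<\frac{M}{2}$, i.e.\ $u_\infty$ solves the steady equation \eqref{EDOstat}; moreover the quantity $\int_\R(u-\sgn(x)\frac{M}{2})\,dx$ is conserved in time ($\frac{d}{dt}\int_\R u\,dx=\int_\R\partial_t u\,dx=0$ by Step 1), which fixes the translation, so $u_\infty=\bar u$ by Lemma \ref{lemubar}.

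\emph{Step 3: conclusion.} It remains to pass from $u(t_n,\cdot)\to\bar u$ to $E(t_n)\to 0$. Substituting $\eta\mapsto A(\eta)$ in the inner integral defining $E$ and using $A^{-1}(y+A(0))=\bar u(y)$ and \eqref{EDOstat}, one rewrites $E(t)=\int_\R\int_0^{\,A(u(t,x))-A(\bar u(x))}s\,\bar u'(x+s)\,ds\,dx$, with $\bar u'=\Phi(\frac{M}{2})-\Phi(\bar u)$ integrable. The integrand converges to $0$ for a.e.\ $x$; combining dominated convergence on bounded $x$-intervals with the uniform smallness of the contribution from large $|x|$ — controlled by $E(0)<\infty$ (equivalently $\|u(t)-\bar u\|_{L^1}$ bounded via $L^1$-contraction) and propagated by the monotonicity of $E$ — gives $E(t_n)\to 0$; since $E(t)\to E_\infty$, we conclude $E_\infty=0$, that is $\lim_{t\to+\infty}E(t)=0$.

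\emph{Main obstacle.} The algebraic identity of Step 1 is routine; the real difficulty lies in Steps 2--3. On the whole line the dissipation weight $\Phi(\frac{M}{2})-\Phi(u)$ degenerates as $u\to\pm\frac{M}{2}$ and $A$ is singular there, so $E$ is not continuous under mere local convergence of $u$. The crux is therefore to secure enough compactness for $u(t_n,\cdot)$ and to control the tails near $x=\pm\infty$ uniformly in $n$, using the monotone structure, the uniform $L^1$-type bounds of Lemma \ref{lemu}, the conservation that fixes the translation, and the hypothesis $E(0)<\infty$.
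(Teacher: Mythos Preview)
Your Step~1 is essentially the paper's argument: the paper also rewrites $\partial_t u=\partial_x\big((\Phi(u)-\Phi(M/2))\,\partial_x(A(\bar u)-A(u))\big)$, multiplies by $A(\bar u)-A(u)$, and integrates.

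Steps~2--3 diverge from the paper in two substantive ways.

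\emph{Compactness.} The paper does not appeal to generic parabolic regularity. Instead it squeezes compactness directly out of the dissipation: expanding $|\partial_x(A(u(t_j))-A(\bar u))|^2=|\partial_x A(u(t_j))-1|^2$ and using $\partial_x A(\bar u)\equiv 1$, one bounds $\int_\R(\Phi(M/2)-\Phi(u(t_j)))|\partial_x A(u(t_j))|^2\,dx$ in terms of $D(t_j)$ plus $\||u(t_j)|-M/2\|_{L^1}$. This integral equals $\|\partial_x B(u(t_j))\|_{L^2}^2$ with $B'(u)=(\Phi(M/2)-\Phi(u))^{-1/2}$, giving a uniform $L^2$ bound on $\partial_x B(u(t_j))$ and hence $L^2_{\rm loc}$ compactness of $B(u(t_j))$, so a.e.\ convergence of $u(t_j)$. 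This is sharper than invoking interior parabolic estimates and feeds directly into the limit identification below.

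\emph{Fixing the translation.} Here the paper's route is much simpler than yours: by Lemma~\ref{lemu} one has $u(t,0)=0$ for all $t$, so $u_\infty(0)=0$ immediately, and Lemma~\ref{lemubar} gives $u_\infty=\bar u$. Your conserved-quantity argument has two gaps: you must (i) pass $\int_\R(u(t_n)-\sgn(x)\tfrac{M}{2})\,dx$ to the limit, which needs uniform tightness (local convergence is not enough; the $L^1$-contraction you mention would supply it, but must be invoked as an independent ingredient), and (ii) show that $a\mapsto\int_\R(\bar u(\cdot-a)-\sgn(\cdot)\tfrac{M}{2})$ is injective (true, since its $a$-derivative equals $-M$, but you do not say so). Also, your parenthetical ``$\frac{d}{dt}\int_\R u\,dx=0$'' is ill-posed since $u\notin L^1$.

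For the identification of the steady ODE, the paper avoids lower-semicontinuity altogether: from Cauchy--Schwarz, $\int_\R\big(\partial_x B(u(t_j))-\sqrt{\Phi(M/2)-\Phi(u(t_j))}\big)h\,dx\le\|h\|_2\,D(t_j)^{1/2}\to 0$ for every $h\in L^2$, which in the limit gives $\partial_x B(u_\infty)=\sqrt{\Phi(M/2)-\Phi(u_\infty)}$, i.e.\ the Cauchy problem \eqref{EDOstat}. Your weighted lower-semicontinuity on the open set $\{|u_\infty|<M/2\}$ is correct in spirit but leaves a loose end: you must rule out plateaus where $u_\infty\equiv\pm M/2$ before concluding that $u_\infty$ is a translate of $\bar u$.

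In short: your outline is plausible and could be completed, but the paper's proof is tighter in Steps~2--3 --- it extracts compactness from the dissipation itself (via $B$), identifies the limit by a one-line Cauchy--Schwarz, and fixes the translation by the pointwise normalization $u(t,0)=0$ rather than a global conserved integral.
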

\begin{proof}
We first notice that by definition of $E$, since $A$ is a nondecreasing function, we have $E(t)\geq 0$. Then $E$ may be seen as an entropy. Next we complete our proof in a series of steps.
\smallskip

{\it Step 1 (Entropy dissipation).}
We may rewrite equation \eqref{eq1D:u} as
$$
\pa_t u - \pa_x \left(\pa_x u + \Phi(u)-\Phi(\frac{M}{2})\right) = 0,
$$
which, thanks to the definition of the steady state $\bar{u}$, rewrites as
$$
\pa_t u - \pa_x \left(\Big(\Phi(u)-\Phi(\frac{M}{2})\Big) \pa_x (A(\bar{u})-A(u))\right) = 0.
$$
Multiplying by $A(\bar{u})-A(u)$ and integrating over $\R$, we obtain
$$
\int_\R \pa_t u (A(\bar{u})-A(u)) \,dx + \int_\R \Big(\Phi(u)-\Phi(\frac{M}{2})\Big) |\pa_x (A(\bar{u})-A(u))|^2 \,dx = 0.
$$
We deduce that \eqref{estim} holds.
\smallskip

{\it Step 2 (Compactness argument).} \\
Integrating \eqref{estim} in time, we deduce that for any $t>0$,
$$
E(t) + \int_0^t \int_\R \big(\Phi(\frac{M}{2})-\Phi(u)\big) \big|\pa_x (A(u)-A(\bar{u}))\big|^2 \,dxds = E(0) < +\infty.
$$
In particular, it implies that
$\int_\R \big(\Phi(\frac{M}{2})-\Phi(u)\big) \big|\pa_x (A(u)-A(\bar{u}))\big|^2 \,dx\in L^1(\R_+)$.

Thus there exists a sequence $t_j\to +\infty$ such that
$$
D(t_j) := \int_\R \big(\Phi(\frac{M}{2})-\Phi(u(t_j))\big) \big|\pa_x (A(u(t_j))-A(\bar{u}))\big|^2 \,dx \underset{j\to +\infty}{\longrightarrow} 0.
$$
Expanding and using a Young inequality, we get
$$
\big|\pa_x (A(u(t_j)))-1\big|^2 = |\pa_x (A(u(t_j)))|^2 - 2 \pa_x (A(u(t_j)))
+1 \geq \frac 12 |\pa_x (A(u(t_j)))|^2 - 1.
$$
Thus there exists a nonnegative constant such that
\begin{align*}
\int_{\R} \big(\Phi(\frac{M}{2})-\Phi(u(t_j))\big) \big|\pa_x (A(u(t_j)))\big|^2\,dx
&\leq C + 2\int_{\R} \big(\Phi(\frac{M}{2})-\Phi(u(t_j))\big)\,dx  \\
&\leq C + 2 \|\phi\|_\infty \int_{\R} \left|\frac{M}{2} - |u(t_j)|\right|\,dx.
\end{align*}
From Lemma \ref{lemu}, the last term of the right hand side is bounded.
By definition of $A$, we also get
$$
\int_{\R} \big(\Phi(\frac{M}{2})-\Phi(u(t_j))\big) \big|\pa_x (A(u(t_j)))\big|^2\,dx
 = \int_{\R} \frac{|\pa_x u(t_j)|^2}{\Phi(\frac{M}{2})-\Phi(u(t_j))}\,dx
 = \int_{\R} |\pa_x B(u(t_j))|^2\,dx,
$$
where $B$ is an antiderivative of $u\mapsto \frac{1}{\sqrt{\Phi(\frac{M}{2})-\Phi(u)}}$, then $B$ is increasing and invertible.

We deduce from the above computation that the sequence $\{\pa_x B(u(t_j))\}_j$ is uniformly bounded in $L^2(\R)$.
Therefore, we can extract a subsequence, still denoted $\{B(u(t_j))\}_j$ converging in $L^2_{loc}(\R)$ and a.e. towards a limit denoted $\bar{B}$ as $j\to +\infty$.
As a consequence, $\{u(t_j)\}_j$ converges a.e. towards $u_\infty = B^{-1}(\bar{B})$ as $j\to +\infty$.
\smallskip

{\it Step 3 (Identification of the limit).} \\
To identify this limit, we first notice that since $u(t_j,0)=0$ for any $j\in\mathbb{N}$, then $u_\infty(0)=0$.
Moreover, for any $h\in L^2(\R)$, we have, by a Cauchy-Schwarz inequality
$$
\int_\R \left(\pa_x B(u(t_j)) - \sqrt{\Phi(\frac{M}{2})-\Phi(u(t_j))}\right)h \,dx \leq
\left(\int_{\R} h^2\,dx\right)^{1/2}
D(t_j)^{1/2}  \underset{j\to +\infty}{\longrightarrow} 0.
$$
It implies that for any $h\in L^2(\R)$,
$$
\int_\R \left(\pa_x B(u_\infty) - \sqrt{\Phi(\frac{M}{2})-\Phi(u_\infty)}\right)h(x)\,dx = 0.
$$
We deduce that $u_\infty$ satisfies the problem \eqref{EDOstat}. By uniqueness, we have $u_\infty=\bar{u}$.

Finally, for any regular function $\chi$ compactly supported,
we have,
$$
\int_{\R} \int_{\bar{u}}^{u(t_j)} (A(\bar{u})-A(v)) \chi(x) \,dvdx \underset{j\to +\infty}{\longrightarrow} 0.
$$
Then, choosing $\chi\in C^\infty$ such that $\chi(x)=1$ on $[-\frac 12,\frac 12]$ and $\chi(x)=0$ for $|x|\geq 1$,
\begin{align*}
E(t_j)& \leq \int_{\R} \int_{\bar{u}}^{u(t_j)} (A(\bar{u})-A(v)) \chi\big(\frac{x}{R}\big) \,dvdx + \int_{\R} \int_{\bar{u}}^{u(t_j)} (A(\bar{u})-A(v))\left(1- \chi\big(\frac{x}{R}\big)\right) \,dvdx \\
      & \leq \int_{\R} \int_{\bar{u}}^{u(t_j)} (A(\bar{u})-A(v)) \chi\big(\frac{x}{R}\big) \,dvdx + \|1-\chi\|_\infty \int_{\R\setminus [-R,R]}\int_{\bar{u}}^{u(t_j)} (A(\bar{u})-A(v)) \,dvdx.
\end{align*}
The second term of the right hand side goes to $0$ as $R\to +\infty$, the first term converges to $0$ as $j\to +\infty$.
We deduce that $E(t_j) \to 0$ as $j\to+\infty$.
Since $E$ is decreasing, we conclude that $\lim_{t\to +\infty} E(t) = \lim_{j\to +\infty} E(t_j) = 0$.

\end{proof}

\begin{corollary}\label{thm1d}
Let $M>0$ and $(\rho,S)$ be a solution to system \eqref{eq1D:n}--\eqref{eq1D:n0} with $\phi$ satisfying \eqref{asF} and with an initial data $\rho^0\in L^1(\R)$ such that
$x\mapsto \left|\int_{-\infty}^x \rho^0(y)dy - \frac{M}{2}\right|-\frac{M}{2}$ belongs to $L^1(\R)$.

Let $\bar{\rho}=\pa_x \bar{u}$ where $\pa_x \bar{u}$ is defined in Lemma \ref{lemubar}.

Then we have
$$
\lim_{t\to + \infty} W_2(\rho(t),\bar{\rho}) = 0,
$$
where $W_2$ denotes the Wasserstein distance of second order.
\end{corollary}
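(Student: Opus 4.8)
The plan is to turn the entropy decay $E(t)\to0$ of Proposition~\ref{limu} into a Wasserstein estimate via the one–dimensional monotone rearrangement. Write $X(t,\cdot)=u(t,\cdot)^{-1}$ and $\bar X=\bar u^{-1}$ for the strictly increasing generalized inverses, which map $\R$ onto $(-\tfrac M2,\tfrac M2)$; since $\rho(t)/M$ and $\bar\rho/M$ are the probability laws with cumulative distribution functions $\tfrac1M(u(t,\cdot)+\tfrac M2)$ and $\tfrac1M(\bar u+\tfrac M2)$, the monotone coupling gives
\[
W_2(\rho(t),\bar\rho)^2=\int_{-M/2}^{M/2}|X(t,m)-\bar X(m)|^2\,dm,\qquad \int_{-M/2}^{M/2}|X(t,m)|^2\,dm=\int_\R x^2\rho(t,x)\,dx ,
\]
the second identity (and its $|x|$-analogue) by the change of variables $m=u(t,x)$. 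First I would use the coercivity of $E$: the inner integral defining $E$ equals the Bregman divergence $\mathcal A(u)-\mathcal A(\bar u)-A(\bar u)(u-\bar u)$ of the convex function $\mathcal A$ with $\mathcal A'=A$, and since $\Phi$ is even and nondecreasing on $\R^+$ one has $A'(v)=\big(\Phi(\tfrac M2)-\Phi(v)\big)^{-1}\ge c_0:=\big(\Phi(\tfrac M2)-\Phi(0)\big)^{-1}>0$ on $(-\tfrac M2,\tfrac M2)$, so this divergence is $\ge\tfrac{c_0}2(u-\bar u)^2$ pointwise and hence $\tfrac{c_0}2\|u(t,\cdot)-\bar u\|_{L^2(\R)}^2\le E(t)\to0$. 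Because $u(t,\cdot)$, $\bar u$ are nondecreasing and $\bar u$ is continuous and strictly increasing (Lemma~\ref{lemubar}), $L^2$ convergence forces $u(t,x)\to\bar u(x)$ for every $x$, hence $X(t,m)\to\bar X(m)$ for every $m$ and narrow convergence $\rho(t)\rightharpoonup\bar\rho$.

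The main obstacle is that the $L^2$ bound does not see the far field, so one must add uniform tightness of second moments; this is where the fine structure of $E$ matters. Since $\Phi(\tfrac M2)-\Phi(v)\sim\tfrac M2\phi(\tfrac M2)(\tfrac M2-v)$ as $v\to\tfrac M2$, $A$ blows up only logarithmically at $\pm\tfrac M2$, while $\tfrac M2-\bar u(x)$ decays exponentially so that $-\log(\tfrac M2-\bar u(x))\gtrsim|x|$ for $|x|$ large. A direct estimate of the Bregman integrand then shows that, up to a term controlled by $|x|\,(\tfrac M2-|\bar u(x)|)$ (which is integrable in $x$), it is bounded below by $c\,|x|\,(\tfrac M2-|u(t,x)|)$ for $|x|$ large. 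Combining this with the Fubini identity $\int_R^\infty x\,(\tfrac M2-u(t,x))\,dx=\tfrac12\int_R^\infty(y^2-R^2)\,\rho(t,y)\,dy$ (and the analogue near $-\infty$) and $E(t)\to0$, one obtains $\limsup_{t\to\infty}\int_{|y|>K}y^2\rho(t,y)\,dy\to0$ as $K\to\infty$; equivalently $\{|X(t,\cdot)|^2\}$ is uniformly integrable near $\pm\tfrac M2$ for $t$ large.

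To conclude, I would split $\int_{-M/2}^{M/2}|X(t,m)-\bar X(m)|^2\,dm$ at $|m|=\tfrac M2-\delta$: on the central interval, $X(t,\cdot)$ is uniformly bounded for $t$ large (again by the tail estimate) and converges a.e.\ to $\bar X$, so that contribution tends to $0$ as $t\to\infty$ for each fixed $\delta$, while the outer contribution is $\lesssim\int_{|m|>M/2-\delta}\big(|X(t,m)|^2+|\bar X(m)|^2\big)\,dm$, small uniformly in large $t$ by the previous step and the finiteness of the second moment of $\bar\rho$. Letting $t\to\infty$ then $\delta\to0$ gives $W_2(\rho(t),\bar\rho)\to0$ (equivalently, narrow convergence together with convergence of second moments characterizes $W_2$-convergence, and the uniform integrability upgrades the narrow convergence of the first step). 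The genuinely delicate point is the far-field control in the middle step: the coercivity $E\gtrsim\|u-\bar u\|_{L^2}^2$ is elementary, but preventing leakage of mass to infinity needs the logarithmic blow-up of $A$ at $\pm\tfrac M2$ together with the exponential localization of $\bar\rho$; note also that the scheme presupposes $E(0)<\infty$, which under the stated hypotheses is equivalent to $\rho^0$ having finite second moment — necessary anyway for $W_2(\rho^0,\bar\rho)$ to make sense.
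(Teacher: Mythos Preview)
Your coercivity bound $E(t)\ge\tfrac{c_0}{2}\|u(t,\cdot)-\bar u\|_{L^2(\R)}^2$ is exactly the paper's computation. From that point, however, the paper finishes in a single line by invoking the identity $W_2(\rho,\bar\rho)=\|u-\bar u\|_{L^2(\R)}$ together with $E(t)\to0$ from Proposition~\ref{limu}. You instead pass to the generalized inverses $X,\bar X$, use the monotone-rearrangement formula $W_2^2=\int_{-M/2}^{M/2}|X-\bar X|^2\,dm$, and then work to obtain uniform integrability of $|X(t,\cdot)|^2$ from the logarithmic blow-up of $A$ at $\pm M/2$ combined with the exponential localisation of $\bar\rho$. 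This is a genuinely different and much longer route. Its advantage is that it rests on the textbook one-dimensional expression for $W_2$ via quantile functions; the paper's shortcut equates $W_2$ with the $L^2$ norm of the difference of the cumulative distribution functions, which is not the standard optimal-transport formula and is not an equality for general pairs of measures, so your caution is not misplaced. The cost is that your tightness step is only sketched --- the claimed lower bound of the Bregman integrand by $c\,|x|\,(\tfrac M2-|u(t,x)|)$ for large $|x|$ would need a careful justification --- and, as you yourself flag, the whole scheme requires $E(0)<\infty$, essentially a second-moment condition on $\rho^0$ that the corollary's stated hypotheses do not explicitly provide.
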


\begin{rem}
The assumption on the initial data is automatically satisfied if $\rho^0$ is compactly supported, since then $x\mapsto \left|\int_{-\infty}^x \rho^0(y)dy - \frac{M}{2}\right|-\frac{M}{2}$ is also compactly supported.
\end{rem}

\begin{proof}
This is a direct consequence of Proposition \ref{limu}. Indeed, we have $\rho=\pa_x u$ and $\bar{\rho}=\pa_x\bar{u}$ and
\begin{align*}
E(t) & = \int_\R \int_{\bar{u}}^u (A(v) - A(\bar{u}))\,dvdx
 = \int_\R \int_{\bar{u}}^u \int_v^{\bar{u}} \frac{dw}{\Phi(\frac{M}{2})-\Phi(w)}dvdx  \\
& \geq \frac{1}{\Phi(\frac{M}{2})-\Phi(0)} \int_{\R} \int_{\bar{u}}^u \int_v^{\bar{u}} \,dwdvdx = \int_\R\frac{(u-\bar{u})^2}{2(\Phi(\frac{M}{2})-\Phi(0))}\,dx.
\end{align*}
We conclude by using the fact that
$W_2(\rho,\bar{\rho}) = \| u-\bar{u} \|_{L^2(\R)},$
and $\lim_{t\to + \infty} E(t)=0$.
\end{proof}

\section{Long time asymptotics}\label{sec:asymp}

Now, we investigate the asymptotic dynamics in long time of solutions to the flux-limited Keller-Segel system (\ref{ksf}) in physical dimension $d=2$ or $d=3$. We show that for the chemical decay rate $\alpha>0$, then $\rho(t,x) \to 0$ as $t \to \infty$. While when the chemical decay is ignored ($\alpha=0$), we obtain the convergence $\rho(t,x) \to 0$ as $t \to \infty$ under the assumption that the cell mass $M=\int_{\R^d} \rho^0(x)dx$ is small.
\begin{theorem}\label{thm}
Let $d=2,3$ and  $(\rho, S)$ be a solution of (\ref{ksf}) on $\R^d\times[0, \infty)$. Then for any $M>0$ when $\alpha>0$ and $\tau=\{0,1\}$ or small $M>0$ when $\alpha=0$ and $\tau=0$
it holds that
\begin{equation*}
\begin{aligned}
\norm{\rho(t)}_{L^p(\mathbb{R}^d)} \leq C t^{-\frac d2(1-\frac 1p)},
\end{aligned}
\end{equation*}
where  $1<p\leq \infty$ and $C>0$ is a constant independent of $t$.
\end{theorem}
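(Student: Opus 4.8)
The plan is to combine the Duhamel (mild) formulation of \eqref{ksf} with the smoothing estimates for the heat semigroup, bootstrap the decay exponent upward, and exploit the flux limitation \eqref{asF} together with the uniform‑in‑time bounds already provided by Theorem \ref{Global}. First I would reduce the problem: mass is conserved, $\|\rho(t)\|_{L^1}=M$, and (Theorem \ref{Global} plus interpolation) $\sup_{t\ge 0}\|\rho(t)\|_{L^p}\le C_p$ for every $p\in[1,\infty]$, so it suffices to establish the claimed rate for one large exponent — say $p=\infty$ — because then Hölder interpolation $\|\rho(t)\|_{L^p}\le\|\rho(t)\|_{L^1}^{1/p}\|\rho(t)\|_{L^\infty}^{1-1/p}$ produces the exponent $\frac d2(1-\frac 1p)$ for all $p\in(1,\infty)$. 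Writing $V:=\phi(|\nabla S|)\nabla S$ and $G:=\rho V$, for which $|G|\le A_\infty\rho$ pointwise by \eqref{asF}, one has $\rho(t)=e^{Dt\Delta}\rho^0-\int_0^t\nabla e^{D(t-s)\Delta}\!\cdot G(s)\,ds$; the linear term is bounded by $C(1+t)^{-\frac d2(1-\frac 1p)}$ using the $L^1\!\to\!L^p$ bound for large $t$ and $\rho^0\in L^p$ for small $t$, so everything rests on the nonlinear term, estimated through $\|e^{\sigma\Delta}f\|_{L^p}\le C\sigma^{-\frac d2(\frac1q-\frac1p)}\|f\|_{L^q}$ and $\|\nabla e^{\sigma\Delta}f\|_{L^p}\le C\sigma^{-\frac12-\frac d2(\frac1q-\frac1p)}\|f\|_{L^q}$.

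The drift must be expressed through $\rho$. When $\alpha>0$, $\tau=0$ one has $\nabla S(t)=K_\alpha* \rho(t)$ with $K_\alpha=\nabla(-\Delta+\alpha)^{-1}\delta\in L^1(\R^d)$ (for $\tau=1$ an exponentially damped time‑convolution does the same job), hence $\|\nabla S(t)\|_{L^r}\le\|K_\alpha\|_{L^1}\|\rho(t)\|_{L^r}$ for all $r$; when $\alpha=0$, $\tau=0$ one has $\nabla S=\nabla N* \rho$ and the (weak‑type) Hardy–Littlewood–Sobolev inequality gives $\|\nabla S(t)\|_{L^r}\le C\|\rho(t)\|_{L^q}$ with $\tfrac1r=\tfrac1q-\tfrac1d$; in both cases $\|V\|_{L^\infty}\le A_\infty$. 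The crucial consequence is that $G=\rho V$ is controlled by a product of $L^r$‑norms of $\rho$ with $r$ not too small — in particular $\|G(s)\|_{L^1}\le C\|\rho(s)\|_{L^2}^2$ when $\alpha>0$ (and a similar bound carrying an extra power of the small mass $M$ when $\alpha=0$) — so that as soon as \emph{some} decay rate is known for $\rho$, $\|G(s)\|_{L^1}$ decays fast.

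Next I would set up the bootstrap on $N_p(t):=\sup_{0<s\le t}(1+s)^{\frac d2(1-\frac1p)}\|\rho(s)\|_{L^p}$, which is finite and continuous by the local bounds. Split the Duhamel integral at $s=t/2$: on $[0,t/2]$ the heat factor is $\le C\,t^{-\frac12-\cdots}$ and one feeds the fast‑decaying $\|G(s)\|_{L^1}$, which integrates to $\le C\cdot(\text{product of already controlled }N_{p_j})\cdot t^{-\frac d2(1-\frac1p)}$ (up to a harmless $\log t$ when $d=2$); on $[t/2,t]$ the time gap is small, so one uses the $L^q$‑estimate with $q$ close to the target exponent, bounding $\|G(s)\|_{L^q}$ by products of $L^r$‑norms of $\rho$ with $r$ in the \emph{previously controlled} range (interpolating high norms against the uniform $L^\infty$ bound so that only those $r$ appear), which again integrates to the right power of $t$. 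Along a finite increasing chain $p_0<p_1<\cdots<p_K=\infty$ — admissible as long as $p_0>d$, each step being allowed because $p_j>d$ — this yields $N_{p_{j+1}}(t)\le C\cdot\big(1+\text{polynomial in }N_{p_0}(t),\dots,N_{p_j}(t)\big)$, so the chain closes by induction, and interpolation with $\|\rho(t)\|_{L^1}=M$ finishes all $p\in(1,\infty)$.

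The genuine obstacle is the base case: decay in $L^{p_0}$ for some $p_0>d$, since a priori the decay of $\rho$ and that of the drift are mutually dependent, and the bare bound $\|V\|_{L^\infty}\le A_\infty$ only yields uniform boundedness (indeed, for a generic bounded inward‑pointing drift the density can converge to a nonzero steady state). For $\alpha=0$, $\tau=0$ I would run the $L^{p}$ energy estimate $\frac{d}{dt}\frac1{p}\|\rho\|_{L^p}^p\le-c_p\|\nabla\rho^{p/2}\|_{L^2}^2+(\text{drift terms})$, control the drift terms via elliptic regularity ($\|D^2S\|_{L^r}\le C\|\rho\|_{L^r}$) and Gagliardo–Nirenberg — here the smallness of $M=\|\rho\|_{L^1}$ is precisely what lets these terms be absorbed by the dissipation — and close a Nash‑type differential inequality $\frac{d}{dt}\|\rho\|_{L^p}^p\le-cM^{-\cdots}\|\rho\|_{L^p}^{p(1+\cdots)}$, giving $\|\rho(t)\|_{L^p}\le Ct^{-\frac d2(1-\frac1p)}$. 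For $\alpha>0$ the same energy quantities are used but the damping in the $S$–equation supplies the missing dissipation — for $\tau=0$ this enters, e.g., through the identity $\frac{d}{dt}\int \rho S=-2D\|\Delta S\|_{L^2}^2-2D\alpha\|\nabla S\|_{L^2}^2+2\int\rho\,\phi(|\nabla S|)|\nabla S|^2$, whose negative terms must dominate the last one (combined with a compactness argument to convert the resulting dissipation into convergence) — so that no smallness of $M$ is needed. Making this base‑case estimate genuinely close for $\alpha>0$ and arbitrary mass is, I expect, the main technical point of the proof.
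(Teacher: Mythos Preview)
Your Duhamel-plus-bootstrap skeleton and your $\alpha=0$, $d>2$ energy/Nash argument match the paper, but the base case for $\alpha>0$ with arbitrary mass does not close as you propose. In your identity for $\tfrac{d}{dt}\int\rho S$ the positive term $2\int\rho\,\phi(|\nabla S|)|\nabla S|^2$ is, after substituting $\rho=-\Delta S+\alpha S$, cubic in $\nabla S$ and cannot be absorbed by $-2D\|\Delta S\|_{L^2}^2-2D\alpha\|\nabla S\|_{L^2}^2$ without a smallness assumption; a compactness argument would at best give qualitative convergence, not the heat rate. The paper's device is a \emph{stretching transformation} $\rho_\lambda(t,x)=\lambda^d\rho(\lambda^2 t,\lambda x)$, $S_\lambda(t,x)=S(\lambda^2 t,\lambda x)$: in the Duhamel estimate for $\rho_\lambda$ the drift contribution carries the factor $\|\nabla S_\lambda\|_{L^r}=\lambda^{1-d/r}\|\nabla S\|_{L^r}$, and since $\alpha>0$ gives the uniform-in-time bound $\sup_t\|\nabla S(t)\|_{L^r}<\infty$ for all $r$ (this is exactly where $\alpha>0$ and your $K_\alpha\in L^1$ observation enter), a singular Gronwall inequality produces the suboptimal rate $\|\rho(t)\|_{L^p}\le Ct^{\beta-\frac d2(1-\frac1p)}$ for $1<p<\frac{d}{d-2}$ and \emph{any} mass $M$. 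This suboptimal decay is then fed into precisely the Duhamel split at $t/2$ that you outlined, to reach $p=\infty$ with the sharp exponent. The scaling trick is the missing link; without it your chain has no starting point when $\alpha>0$ and $M$ is large.

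There is a second gap at $d=2$, $\alpha=0$. The Nash argument you sketch (and that the paper carries out for $d>2$) controls $\int\rho^p|\nabla S|^2$ through $\|\nabla S\|_{L^d}\le C\|\rho\|_{L^{d/2}}$; for $d=2$ this would require the forbidden endpoint $\|\nabla S\|_{L^2}\le C\|\rho\|_{L^1}$ of Hardy--Littlewood--Sobolev, and the differential inequality does not close. The paper handles $d=2$ by a separate \emph{method of trap}: from Duhamel one derives a scalar inequality $\varphi(t)\le c_0 M+c_1 M^{\xi}\varphi(t)^{\theta}$ for $\varphi(t)=\sup_{0<s\le t}2s\|\rho(2s)\|_{L^\infty}$ with $\theta>1$ and $0<\xi<1$, and smallness of $M$ traps $\varphi$ below a fixed constant, giving $\|\rho(t)\|_{L^\infty}\le Ct^{-1}$ directly.
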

In both cases, the estimate of order of convergence in time of the norm of $\rho$ towards $0$ is the same as the one for the heat equation.
We will use the following notations. The heat kernel is denoted by $G$:
$$
G(t,x)=\frac{1}{(4\pi t)^{d/2}}\exp\left(-\frac{|x|^2}{4t}\right), \quad x \in \R^d, \ t>0.
$$
It generates a semi-group whose operator is denoted by $e^{t \Delta}$, i.e. $e^{t \Delta} f= G(t) * f$.

\subsection{Asymptotics with chemical decay ($\alpha>0$)}
First we remark that in the case $\alpha>0$, as a direct consequence of Lemma \ref{g} in Appendix along with the fact that $\rho \in L^q(\R^d)$ for $q \in [1, \infty]$ (see Theorem \ref{Global}), we have
\begin{equation}\label{grads}
\forall\, t>0, \quad
\|\nabla S(t)\|_{L^p(\R^d)}< \infty, \mbox{ for all } p \in [1, \infty],
\end{equation}
which is crucial to prove the following result.
\begin{lemma}\label{rho1}
Let $\beta$ be a constant with $0< \beta\leq 1/2$. Then for any $1<p<\frac{d}{d-2}$ $(d\geq 2)$, there is a constant $C>0$ such that, for $t\geq 1$,
$$\|\rho(t)\|_{L^p(\R^d)} \leq C t^{\beta-\frac{d}{2}(1-\frac{1}{p})}.$$
\end{lemma}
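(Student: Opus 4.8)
I would argue by Duhamel's formula for $\rho$ together with the heat-semigroup regularizing estimates of Lemma~\ref{g} (and \eqref{hk}), organised as a bootstrap on the decay exponent. Write the cell equation as $\partial_t\rho=D\Delta\rho-\nabla\cdot g$ with drift $g(t):=\rho(t)\,\phi(|\nabla S(t)|)\nabla S(t)$; by \eqref{asF}, $\|g(t)\|_{L^\infty}\le A_\infty$ and, by Theorem~\ref{Global}, $\|g(t)\|_{L^q}\le A_\infty\|\rho(t)\|_{L^q}$ with $\|\rho(t)\|_{L^q}$ uniformly bounded in $t$ for every $q\in[1,\infty]$ and $\|\rho(t)\|_{L^1}=M$. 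For $t\ge1$,
\begin{equation*}
\rho(t)=e^{(t/2)D\Delta}\rho(t/2)-\int_{t/2}^{t}\nabla e^{(t-s)D\Delta}g(s)\,ds .
\end{equation*}
The first term already decays at the full heat rate: by $L^1$--$L^p$ smoothing and mass conservation, $\|e^{(t/2)D\Delta}\rho(t/2)\|_{L^p}\le C\,t^{-\frac d2(1-\frac1p)}M$, which beats the claimed bound since $\beta>0$. Hence everything comes down to the drift term. Moreover, for $p\le \frac d{d-2\beta}$ (where $\frac d2(1-\frac1p)\le\beta$) the claimed estimate is already contained in the uniform $L^p$ bound of Theorem~\ref{Global}, so one only needs to work in the range $\frac d{d-2\beta}<p<\frac d{d-2}$.

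The hypothesis $\alpha>0$ enters through the control of $\nabla S$ by $\rho$ \emph{with a gain of integrability}, which makes the drift term quadratically small. For $\tau=0$, $\nabla S=\nabla k_\alpha*\rho$ with $\nabla k_\alpha\in L^c(\R^d)$ for every $c\in[1,\tfrac d{d-1})$; for $\tau=1$ one uses Duhamel for $S$ and the convergence of $\int_0^\infty\sigma^{-1/2}e^{-\alpha\sigma}\,d\sigma$ (this integral diverges for $\alpha=0$, which is exactly where the hypothesis is needed). Consequently $\|\nabla S(s)\|_{L^m}\le C\|\rho(s)\|_{L^b}$ with $\tfrac1b$ arbitrarily close to $\tfrac1m+\tfrac1d$ from below, and then by Hölder $\|g(s)\|_{L^r}\le\phi(0)\|\rho(s)\|_{L^{r_1}}\|\nabla S(s)\|_{L^m}$, $\tfrac1r=\tfrac1{r_1}+\tfrac1m$. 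So, on a suitable range of exponents $r$ — which, since $d=2,3$, can be arranged to reach up close to $\frac d{d-2}$ — a decay bound $\|\rho(s)\|_{L^q}\le C_q\,s^{-\gamma}$ valid for all admissible $q$ upgrades to $\|g(s)\|_{L^r}\le C\,s^{-2\gamma}$.

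Inserting this into the Duhamel integral, choosing $r$ with $\tfrac1r-\tfrac1p<\tfrac1d$ (integrability at $s=t$) and applying the gradient smoothing estimate,
\begin{equation*}
\Big\|\int_{t/2}^t\nabla e^{(t-s)D\Delta}g(s)\,ds\Big\|_{L^p}\le C\int_{t/2}^t(t-s)^{-\frac12-\frac d2(\frac1r-\frac1p)}\,s^{-2\gamma}\,ds\le C_{\varepsilon_0}\,t^{\varepsilon_0-2\gamma},
\end{equation*}
with $\varepsilon_0=\tfrac12-\tfrac d2(\tfrac1r-\tfrac1p)>0$ arbitrarily small (the constant $C_{\varepsilon_0}$ blows up as $\varepsilon_0\to0$). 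Pushing $\tfrac1r-\tfrac1p\uparrow\tfrac1d$ while keeping $r\ge1$ and $r$ in the range of the preceding step is possible precisely when $p<\frac d{d-2}$, which is where that restriction comes from. Combining with the first term, $\|\rho(t)\|_{L^p}\le C\,t^{-\frac d2(1-\frac1p)}+C_{\varepsilon_0}\,t^{\varepsilon_0-2\gamma}$, run simultaneously over all admissible exponents; iterating, the rate essentially doubles at each step (modulo the $\varepsilon_0$-loss) and saturates near $\frac d2(1-\frac1p)$ after finitely many steps, because $p<\frac d{d-2}$ forces $\frac d2(1-\frac1p)<1$; one fixes $\varepsilon_0$ small in terms of $\beta$ so that the accumulated losses stay below $\beta$.

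\textbf{Main obstacle.} The crux is starting the iteration: from the uniform bound alone the drift term is only bounded, so the first strictly positive decay increment must be produced separately. I would extract it from the self-limiting character of the chemotactic drift — as $\rho$ spreads, $\|\nabla S\|$ and hence $\|g\|$ in the relevant norms become small — which again rests on $\alpha>0$ through the gain of integrability above, combined with a Young-type absorption that closes exactly because the interpolation exponent between $L^1$ and $L^p$ is strictly below $\tfrac12$ when $p<\frac d{d-2}$. Carrying all the exponents uniformly over the full admissible range of $L^q$-norms, so that the $\nabla S$ estimate can be iterated, is the other technically demanding part.
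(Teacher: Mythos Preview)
Your approach is different from the paper's and, as you yourself diagnose, has a real gap at the initialization of the bootstrap. With only the uniform $L^q$ bounds of Theorem~\ref{Global} as input ($\gamma=0$), the drift integral over $[t/2,t]$ produces $O(t^{\varepsilon_0})$, which \emph{grows} for $t\ge1$; no positive first decay increment is generated, and the doubling scheme never starts. Your suggested fix --- ``self-limiting drift'' plus ``Young-type absorption'' --- is circular as written: decay of $\nabla S$ requires decay of $\rho$, which is what you are trying to prove. The sublinear interpolation exponent you mention would be relevant if you were absorbing a term proportional to $\sup_{[t/2,t]}\|\rho\|_{L^p}$ into the left-hand side, but on $[t/2,t]$ the integral carries no small prefactor (the uniform bound on $\|\nabla S\|_{L^m}$ is a fixed constant, not $o(1)$), so there is nothing to absorb.

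The paper sidesteps the initialization issue entirely by a scaling argument combined with the singular Gronwall inequality (Lemma~\ref{gronwall}). Set $\rho_\lambda(t,x)=\lambda^d\rho(\lambda^2 t,\lambda x)$, $S_\lambda(t,x)=S(\lambda^2 t,\lambda x)$; then $\|\nabla S_\lambda(t)\|_{L^r}=\lambda^{1-d/r}\|\nabla S(\lambda^2 t)\|_{L^r}\le C\lambda^{2\beta}$ for the choice $r=\frac{d}{1-2\beta}$ (here the uniform-in-time bound \eqref{grads}, which uses $\alpha>0$, is essential). Writing the \emph{full} Duhamel formula for $\rho_\lambda$ on $[0,t]$ and using H\"older with this $r$ gives
\[
\|\rho_\lambda(t)\|_{L^p}\le CMt^{-\frac d2(1-\frac1p)}+C\lambda^{2\beta}\int_0^t(t-s)^{\beta-1}\|\rho_\lambda(s)\|_{L^p}\,ds,
\]
and now the singular Gronwall lemma closes the estimate in one stroke --- no iteration. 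For $t\in(0,1]$ this yields $\|\rho_\lambda(t)\|_{L^p}\le C(1+\lambda^{2\beta})t^{-\frac d2(1-\frac1p)}$ (using $p<\frac d{d-2}$ so that the resulting time integral converges); evaluating at $t=1$ and unscaling with $\lambda=\sqrt{t}$ gives the claim. The key device you are missing is the singular Gronwall inequality applied to the full Duhamel representation; the scaling is a clean way to package it, and the restriction $p<\frac d{d-2}$ enters exactly where the integral $\int_0^t(t-s)^{\beta-1}s^{-\frac d2(1-\frac1p)}\,ds$ must be finite.
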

\begin{proof}
With a stretching transformation by borrowing an idea from \cite{NSU}, we introduce
$$
\rho_\lambda(t,x)=\lambda^d \rho( \lambda^2 t,\lambda x), \ S_\lambda(t,x)=S(\lambda^2t,\lambda x), \quad x\in \R^d, \ t>0.
$$
We find from (\ref{ksf}) that $(\rho_\lambda, S_\lambda)$ satisfies
\begin{eqnarray*} \label{ksfn}
\left\{\begin{array}{l}
\p_t\rho_\lambda=\Delta \rho_\lambda -{\rm div} [\rho_\lambda \phi(\frac{1}{\lambda}|\nabla S_\lambda|)\nabla S_\lambda ], \ \ x\in \R^d, \ t > 0,\\[2mm]
\tau \p_t S_\lambda=\Delta S_\lambda-\alpha \lambda^2 S_\lambda+\lambda^{2-d}\rho_\lambda.
 \end{array} \right.
\end{eqnarray*}
It can also be easily checked that
\begin{eqnarray}\label{in}
\begin{aligned}
&\|\nabla \rho_\lambda(t)\|_{L^p(\R^d)}=\lambda^{1+d-\frac{d}{p}}\|\nabla\rho(\lambda^2t)\|_{L^p(\R^d)},\\
&\|\nabla S_\lambda(t)\|_{L^p(\R^d)}=\lambda^{1-\frac{d}{p}}\|\nabla S(\lambda^2t)\|_{L^p(\R^d)},\\
&\|\rho_\lambda(t)\|_{L^p(\R^d)}=\lambda^{d-\frac{d}{p}}\|\rho(\lambda^2t)\|_{L^p(\R^d)}.
\end{aligned}
\end{eqnarray}
By Duhamel's principle, we write $\rho_\lambda$ from (\ref{ksf}) as
$$
\rho_\lambda(t)=e^{t\Delta}\rho^0_{\lambda}-\int_0^t \nabla \cdot e^{(t-s)\Delta}\Big(\rho_\lambda \phi\big(\frac{1}{\lambda}|\nabla S_\lambda|\big)\nabla S_\lambda\Big)(s)ds,
$$
where $\rho^0_{\lambda}(x)=\lambda^d\rho^0(\lambda x)$. For given constant $\beta$ with $0\leq \beta<1/2$, we define
$$
r=\begin{cases}\frac{d}{2(\frac{1}{2}-\beta)}, &\ \mathrm{if}\ 0<\beta<\frac{1}{2},  \\ \infty, &\ \mathrm{if}\ \beta=\frac{1}{2}.
\end{cases}
$$
Using the properties of $e^{t\Delta}$ in Lemma \ref{Gheat} in Appendix and the fact that $|\phi\big(\frac{1}{\lambda}|\nabla S_\lambda|)|\leq A_0$ (cf. \eqref{asF}), we find constants $1<q\leq p \leq \infty$ with $\frac{1}{r}+\frac{1}{p}=\frac{1}{q}$ such that
\begin{eqnarray}\label{inn1}
\begin{aligned}
\|\rho_\lambda(t)\|_{L^p(\R^d)}&\leq  \|e^{t\Delta}\rho^0_{\lambda}\|_{L^p(\R^d)}\\
& \qquad +C\int_0^t(t-s)^{-\frac{d}{2}(\frac{1}{q}-\frac{1}{p})-\frac{1}{2}}
\Big\|\rho_\lambda(s) \phi\Big(\frac{1}{\lambda}|\nabla S_\lambda(s)|\Big)\nabla S_\lambda(s)\Big\|_{L^q(\R^d)}ds\\
&\leq \|e^{t\Delta}\rho^0_{\lambda}\|_{L^p(\R^d)}+C\int_0^t(t-s)^{-\frac{d}{2}(\frac{1}{q}-\frac{1}{p})-\frac{1}{2}}\|\rho_\lambda(s) \nabla S_\lambda(s)\|_{L^q(\R^d)}ds\\
&\leq C t^{-\frac{d}{2}(1-\frac{1}{p})} \|\rho^0_{\lambda}\|_{L^1(\R^d)}\\
&\qquad +C \int_0^t(t-s)^{-\frac{d}{2}(\frac{1}{q}-\frac{1}{p})-\frac{1}{2}}\|\rho_\lambda(s)\|_{L^p(\R^d)}\| \nabla S_\lambda(s)|\|_{L^r(\R^d)}ds, \\
\end{aligned}
\end{eqnarray}
where Lemma \ref{Gheat} in Appendix and the H\"{o}lder inequality have been used for the last inequality.
Since $\rho^0_{\lambda}(x)=\lambda^d\rho^0(\lambda x)$, it is easy to verify that $\|\rho^0_{\lambda}\|_{L^1(\R^d)}=\|\rho^0\|_{L^1(\R^d)}=M$.
From \eqref{in} and (\ref{grads}), it follows that $\|\nabla S_\lambda\|_{L^r(\R^d)}=\lambda^{1-\frac{d}{r}}\|\nabla S\|_{L^r(\R^d)} \leq C \lambda^{1-\frac{d}{r}}$ for some constant $C>0$. Then we update (\ref{inn1}) as
\begin{eqnarray*}\label{inn2}
\begin{aligned}
\|\rho_\lambda(t)\|_{L^p(\R^d)}
&\leq C M t^{-\frac{d}{2}(1-\frac{1}{p})} + C \lambda^{1-\frac{d}{r}}\int_0^t(t-s)^{\beta-1}\|\rho_\lambda(s)\|_{L^p(\R^d)}ds\\
&\leq C M t^{-\frac{d}{2}(1-\frac{1}{p})} + C \lambda^{2\beta}\int_0^t(t-s)^{\beta-1}\|\rho_\lambda(s)\|_{L^p(\R^d)}ds,
\end{aligned}
\end{eqnarray*}
which, along with the singular Gronwall's inequality (see Lemma \ref{gronwall}), yields
\begin{eqnarray}\label{inn3}
\begin{aligned}
\|\rho_\lambda(t)\|_{L^p(\R^d)}
\leq C t^{-\frac{d}{2}(1-\frac{1}{p})} + C \lambda^{2\beta}\int_0^t(t-s)^{\beta-1}s^{-\frac{d}{2}(1-\frac{1}{p})}ds.
\end{aligned}
\end{eqnarray}
Let $1<p<\frac{d}{d-2}$. Then $1-\frac{d}{2}(1-\frac{1}{p})>0$ and
\begin{eqnarray}\label{inn3*}
\begin{aligned}
&\int_0^t(t-s)^{\beta-1}s^{-\frac{d}{2}(1-\frac{1}{p})}ds\\
&=\int_0^{t/2}(t-s)^{\beta-1}s^{-\frac{d}{2}(1-\frac{1}{p})}ds+\int_{t/2}^t(t-s)^{\beta-1}s^{-\frac{d}{2}(1-\frac{1}{p})}ds\\
&\leq \Big(\frac{t}{2}\Big)^{\beta-1}\int_0^{t/2}s^{-\frac{d}{2}(1-\frac{1}{p})}ds
+\Big(\frac{t}{2}\Big)^{-\frac{d}{2}(1-\frac{1}{p})}\int_{t/2}^t(t-s)^{\beta-1}ds\\
&\leq C t^{\beta-\frac{d}{2}(1-\frac{1}{p})},
\end{aligned}
\end{eqnarray}
where $C$ is a positive constant depending on $d, p$ and $\beta$.

Let $0<t\leq 1$. Then $t^\beta\leq 1$ and as a result of \eqref{inn3}-\eqref{inn3*} it holds that
$$\|\rho_\lambda(t)\|_{L^p(\R^d)} \leq C (1+\lambda^{2\beta})t^{-\frac{d}{2}(1-\frac{1}{p})}.$$
By the third inequality in (\ref{in}) with $t=1$, we have
$$\|\rho(\lambda^2)\|_{L^p(\R^d)} \leq C \lambda^{-d+\frac{d}{p}}\|\rho_\lambda(1)\|_{L^p(\R^d)} \leq C \lambda^{-d+\frac{d}{p}}(1+\lambda^{2\beta}).$$
Since $\lambda$ is arbitrary, we get by letting $\lambda=\sqrt{t}$ that
\begin{equation*}
\|\rho(t)\|_{L^p(\R^d)} \leq C t^{\beta-\frac{d}{2}(1-\frac{1}{p})} \ \ \mathrm{for} \ \ t\geq 1,
\end{equation*}
which completes the proof.
\end{proof}

Then we investigate the time decay of $\|\rho(t)\|_{L^\infty(\R^d)}$.
\begin{lemma}\label{rho2}
Let $d=2,3$. For any $t\geq 1$, there is a constant $C>0$ such that the solution component $\rho(t,x)$ satisfies for $1<p\leq \infty$
$$\|\rho(t)\|_{L^p(\R^d)} \leq C t^{-\frac{d}{2}(1-\frac{1}{p})}.$$
\end{lemma}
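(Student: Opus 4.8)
The idea is to bootstrap from Lemma~\ref{rho1}, which already yields the sharp heat-type power $t^{-\frac{d}{2}(1-\frac1p)}$ up to the harmless loss $t^{\beta}$ (with $\beta>0$ arbitrarily small) and only for $1<p<\frac{d}{d-2}$; so what remains is to remove the $t^{\beta}$ and to extend the range to $p=\infty$. The single structural obstruction is that the chemotactic velocity $\phi(|\nabla S|)\nabla S$ is merely bounded (by $A_\infty$), not decaying: inserting the bare bound into Duhamel's formula loses a factor $t^{1/2}$. The remedy is that $\alpha>0$ forces $\nabla S$ to decay: from the elliptic/parabolic smoothing of Lemma~\ref{g} applied to $\tau\partial_t S-\Delta S+\alpha S=\rho$ (the zeroth order term $\alpha S$ gives a genuine gain of one derivative, and for $\tau=1$ the exponential weight $e^{-\alpha(t-s)}$ localises the Duhamel integral near $s=t$), one gets in every finite Lebesgue exponent
$$\|\nabla S(t)\|_{L^\sigma(\R^d)}\le C\sup_{t/2\le s\le t}\|\rho(s)\|_{L^{\sigma_0}(\R^d)},\qquad \tfrac1{\sigma_0}-\tfrac1\sigma<\tfrac1d ,$$
so that $\nabla S$ inherits the decay of $\rho$, with one derivative of integrability to spare.

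\textbf{Step~1: the sharp rate at one exponent.} Fix $\beta$ small, an exponent $p_*\in(1,\frac{d}{d-2})$ close to $1$, and $\theta$ slightly larger than $\frac1d$. Using the pointwise interpolation $|\phi(|\nabla S|)\nabla S|\le A_\infty^{1-\theta}\phi(0)^\theta|\nabla S|^\theta$ (legitimate by \eqref{asF}), Duhamel's formula from $t=1$,
$$\rho(t)=e^{(t-1)\Delta}\rho(1)-\int_1^t\nabla\!\cdot e^{(t-s)\Delta}\big(\rho\,\phi(|\nabla S|)\nabla S\big)(s)\,ds,$$
the heat estimates of Lemma~\ref{Gheat}, Hölder's inequality, the decay of $\nabla S$ above, and the crude decay of $\|\rho\|_{L^\sigma}$ supplied by Lemma~\ref{rho1} for $\sigma<\frac{d}{d-2}$, one bounds the nonlinear term and, after splitting the time integral at $t/2$, arrives at an inequality $\Theta(t)\le C+C\,\Theta(t)\,t^{-\eta}$ with $\eta>0$, where $\Theta(t):=\sup_{1\le s\le t}s^{\frac{d}{2}(1-\frac1{p_*})}\|\rho(s)\|_{L^{p_*}(\R^d)}$. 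That $\eta>0$ is exactly the place where $d\le3$ is used (to keep $\frac{d}{2}(1-\frac1{p_*})<1$), together with the freedom to choose $\beta$ small and $\theta$ slightly above $\frac1d$. Absorbing, and using that $\Theta$ is finite on bounded intervals (Theorem~\ref{Global} plus interpolation), gives $\|\rho(t)\|_{L^{p_*}(\R^d)}\le Ct^{-\frac{d}{2}(1-\frac1{p_*})}$ for $t\ge1$.

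\textbf{Step~2: propagation to $1<p\le\infty$.} For $1<p<p_*$ one interpolates with the conserved mass, $\|\rho(t)\|_{L^p}\le M^{1-\vartheta}\|\rho(t)\|_{L^{p_*}}^{\vartheta}$ with $\frac1p=(1-\vartheta)+\frac\vartheta{p_*}$, and the exponents combine to exactly $t^{-\frac{d}{2}(1-\frac1p)}$. For $p>p_*$ one iterates the Duhamel estimate of Step~1: once the sharp rate is known at all exponents up to some $q_1$, one uses it for the $\rho$-factor and, through the elliptic estimate, for $\nabla S$, and reproduces the sharp rate at a larger exponent (in fact with a surplus $t^{(1-d)/2}$ at each iteration); since the gain of integrability $\frac1q-\frac1p$ achievable at each step is bounded below (again because $d\le3$), finitely many steps reach $p=\infty$, which completes the proof.

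\textbf{Main obstacle.} The delicate point is entirely in Step~1: converting the mere boundedness of the drift into a quantitative decay of $\nabla S$, and then choosing the exponent $\theta$ in $|\phi(|\nabla S|)\nabla S|\lesssim|\nabla S|^\theta$, the intermediate Lebesgue exponents, and the loss parameter $\beta$ so that simultaneously every power of $(t-s)$ appearing in the Duhamel convolution is integrable near $s=t$ and no residual positive power of $t$ survives in front of $\Theta(t)$. This balance closes precisely for $d=2,3$; in higher dimension the decay of $\nabla S$ obtained in this way is too weak relative to the parabolic scaling and the bootstrap does not close as stated.
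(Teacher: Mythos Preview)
Your overall plan---Duhamel starting from Lemma~\ref{rho1}, exploit the decay of $\nabla S$ granted by $\alpha>0$ through Lemma~\ref{g}, and a time--splitting at $t/2$---matches the paper's strategy, but the execution diverges in two substantial respects. The paper does \emph{not} run a bootstrap $\Theta(t)\le C+C\,\Theta(t)\,t^{-\eta}$ at an intermediate exponent and then iterate upward: it goes directly for $\|\rho(t)\|_{L^\infty}$ in a single pass, and afterward interpolates \emph{down} with the conserved mass to cover $1<p<\infty$. Concretely, in the split $\rho(t)=I_0+I_1+I_2$ the early piece $I_1$ (integral on $[0,t/2]$) is handled by $\|\rho\,\phi(|\nabla S|)\nabla S\|_{L^1}\le A_0\|\rho\|_{L^p}\|\nabla S\|_{L^{p'}}$ with $\|\nabla S\|_{L^{p'}}$ merely bounded (this is where $\alpha>0$ enters, via \eqref{grads}) and $\|\rho\|_{L^p}$ decaying by Lemma~\ref{rho1}; the late piece $I_2$ uses instead $\|\nabla S\|_{L^\infty}\lesssim\|\rho\|_{L^q}$ for $q>d$ (Lemma~\ref{g}), where $\|\rho\|_{L^q}$ is obtained by interpolation between Lemma~\ref{rho1} and the uniform $L^\infty$ bound of Theorem~\ref{Global}. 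Both pieces already carry a surplus $t^{-\eta_i}$, so there is nothing to absorb, and the restriction $d\in\{2,3\}$ appears as the nonemptiness of the exponent window $\big(\tfrac d2+1,\tfrac d{d-2}\big)$ in the $I_2$ step---not via the inequality $\tfrac d2(1-\tfrac1{p_*})<1$ you invoke, which merely restates $p_*<\tfrac d{d-2}$ and carries no dimensional content on its own.

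Your route, with the interpolation device $|\phi(|\nabla S|)\nabla S|\le A_\infty^{1-\theta}\phi(0)^\theta|\nabla S|^\theta$ and a bootstrap--plus--iteration, should also work in principle, and the $\theta$-trick is a nice way to decouple the integrability requirement on $\nabla S$ from the decay one extracts from it. The trade-off is that it is considerably more elaborate than the paper's one--shot $L^\infty$ estimate, and as written the critical bookkeeping---verifying that $\eta>0$ actually materialises with the declared choices $p_*\approx1$, $\theta$ just above $1/d$, and pinpointing the genuine source of the $d\le3$ restriction---is left to the reader. The approach is not wrong, but the paper's argument is both shorter and more transparent about \emph{where} the dimension enters.
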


\begin{proof}
We shall first prove $\|\rho(t)\|_{L^\infty(\R^d)} \leq C t^{-\frac{d}{2}}$. From the first equation of (\ref{ksf}), we can write $\rho$ as
\begin{equation}\label{u-I}
\begin{split}
  \rho(t)=&e^{t\Delta}\rho^0-\int_0^t\nabla \cdot e^{(t-s)\Delta}(\rho\phi(|\nabla S|)\nabla S)(s)ds\\
  =&e^{t\Delta}\rho^0-\int_0^{\frac{t}{2}}\nabla \cdot e^{(t-s)\Delta}(\rho\phi(|\nabla S|)\nabla S)(s)ds-\int_{\frac{t}{2}}^t\nabla \cdot e^{(t-s)\Delta}(\rho\phi(|\nabla S|)\nabla S)(s)ds\\
  =&I_0+I_1+I_2.
\end{split}
\end{equation}
Next we estimate $I_i (i=0,1,2)$. First by Lemma \ref{Gheat} in Appendix and the fact $ \|\rho^0\|_{L^1(\R^d)}=M$, we get
\begin{equation}\label{I0}
  \|I_0\|_{L^\infty(\R^d)}\leq C  t^{-\frac{d}{2}}.
\end{equation}
By Lemma \ref{Gheat} in Appendix and the fact that $\phi(|\nabla S|)$ is bounded (cf \eqref{asF}), we have for any $p>1$ that
\begin{equation*}
\begin{split}
  \|I_1\|_{L^\infty(\R^d)}\leq C&\int_0^{\frac{t}{2}}(t-s)^{-\frac{d}{2}-\frac{1}{2}}\|\rho\phi(|\nabla S|)\nabla S\|_{L^1}ds\\
  \leq C&\int_0^{\frac{t}{2}}(t-s)^{-\frac{d}{2}-\frac{1}{2}}\|\rho |\nabla S|\|_{L^1}ds\\
  \leq C&\int_0^{\frac{t}{2}}(t-s)^{-\frac{d}{2}-\frac{1}{2}}\|\rho\|_{L^p}\|\nabla S\|_{L^\frac{p}{p-1}}ds.
\end{split}
\end{equation*}
Now we choose $p$ such that $\frac{d}{d-1}<p<\frac{d}{d-2}$. Then from \eqref{grads}, we know $\|\nabla S\|_{L^\frac{p}{p-1}}<\infty$, and furthermore using Lemma \ref{rho1} it holds that
\begin{equation*}
\begin{split}
  \|I_1\|_{L^\infty}\leq C&\int_0^{\frac{t}{2}}(t-s)^{-\frac{d}{2}-\frac{1}{2}}\|\rho \|_{L^p}ds\\
  \leq C&t^{-\frac{d}{2}-\frac{1}{2}}\int_0^{\frac{t}{2}}(1+s)^{-\frac{d}{2}(1-\frac{1}{p})+\beta}ds\\
  \leq C &t^{-\frac{d}{2}-\frac{d}{2}(\frac{d-1}{d}-\frac{1}{p})+\beta}.
\end{split}
\end{equation*}
Let $0<\beta<\frac{d}{2}(\frac{d}{d-1}-\frac{1}{p})$ and define $\eta_1=\frac{d}{2}(\frac{d-1}{d}-\frac{1}{p})-\beta$.
Then $\eta_1>0$, hence
\begin{equation}\label{I1}
  \|I_1\|_{L^\infty(\R^d)}\leq C t^{-\frac{d}{2}-\eta_1},\ \ \ 0<\eta_1<\frac{d}{2}\left(\frac{d-1}{d}-\frac{1}{p}\right).
\end{equation}
Next, we estimate $\|I_2\|_{L^\infty(\R^d)}$. First let us pick $q$ such that $q>d$ and let $1<p<d/(d-2)$ if $d\geq 3$ and $p=q$ if $d=2$. Note that here $p$ has nothing to do with the $p$ used in the estimate for $I_1$. Then by interpolation and the boundedness of $\|\rho(t)\|_{L^\infty(\R^d)}$, we have from Lemma \ref{rho1} that, for $t\geq 1$,
\begin{equation*}
  \|\rho\|_{L^q(\R^d)}\leq\|\rho\|_{L^p(\R^d)}^{\frac{p}{q}}\|\rho\|_{L^\infty(\R^d)}^{1-\frac{p}{q}}\leq Ct^{-\frac{p}{q}[\frac{d}{2}(1-\frac{1}{p})-\beta]},
\end{equation*}
which, along with the results in Lemma \ref{g}, yields that
\begin{equation}\label{ee}
  \|\nabla S\|_{L^\infty(\R^d)}\leq Ct^{-\frac{p}{q}[\frac{d}{2}(1-\frac{1}{p})-\beta]}, \quad q>d.
\end{equation}
These two latter inequalities yield that
\begin{equation*}
  \|\rho(s)\nabla S(s)\|_{L^q(\R^d)}\leq C\|\rho(s)\|_{L^q(\R^d)}\|\nabla S(s)\|_{L^\infty(\R^d)}\leq C s^{-\frac{2p}{q}[\frac{d}{2}(1-\frac{1}{p})-\beta]}.
\end{equation*}
Then we have by Lemma \ref{Gheat},
\begin{equation*}
\begin{split}
  \|I_2\|_{L^\infty(\R^d)}\leq C&\int_{\frac{t}{2}}^{t}(t-s)^{-\frac{d}{2q}-\frac{1}{2}}\|\rho(s)\nabla S(s)\|_{L^q(\R^d)}ds\\
  \leq C&\int_{\frac{t}{2}}^{t}(t-s)^{-\frac{d}{2q}-\frac{1}{2}}s^{-\frac{2p}{q}[\frac{d}{2}(1-\frac{1}{p})-\beta]}ds\\
  \leq C&t^{-\frac{2p}{q}[\frac{d}{2}(1-\frac{1}{p})-\beta]}\int_{\frac{t}{2}}^{t}(t-s)^{-\frac{d}{2q}-\frac{1}{2}}ds\\
  \leq C&t^{-\frac{d}{2q}+\frac{1}{2}-\frac{dp}{q}(1-\frac{1}{p})+\frac{2\beta p}{q}}\\
  \leq C&t^{-\frac{d}{2}-l(p,q)+\frac{2\beta p}{q}},
\end{split}
\end{equation*}
where $l(p,q)=-\frac{d}{2}-\frac{d}{2q}-\frac{1}{2}+\frac{dp}{q}$.

Consider $d=2,3$. For $d=2$, from $p=q$, we see that $l(p,q)=\frac{1}{2}-\frac{1}{q}>0$ by choosing $q>2$. When $d=3$, $l(p,q)=\frac{3}{q}(p-\frac{1}{2})-2$ and  $l(p,q)>0 \Leftrightarrow q<\frac{3}{2}(p-\frac{1}{2})$.
Now by choosing $p$ such that $\frac{d}{2}+1<p<\frac{d}{d-2}$ (i.e. $\frac{5}{2}<p<3$ for $d=3$), we can verify that  $\frac{3}{2}(p-\frac{1}{2})>3$.
Hence we can choose $3<q<\frac{3}{2}(p-\frac{1}{2})$ such that $l(p,q)>0$.
Hence for such $p$ and $q$, we have that $l(p,q)-\frac{2p}{q}\beta>0$ for $\beta>0$ sufficiently small.
This produces
\begin{equation}\label{I2}
  \|I_2\|_{L^\infty(\R^d)}\leq C t^{-\frac{d}{2}-\eta_2},\ \ \ 0<\eta_2<l(p,q)-\frac{2p}{q}\beta.
\end{equation}
Then substituting (\ref{I0}), (\ref{I1}) and (\ref{I2}) into (\ref{u-I}), we arrive at
\begin{equation*}
\|\rho(t)\|_{L^\infty(\R^d)}\leq C t^{-\frac{d}{2}}, \quad \mbox{ for } t\geq 1.
\end{equation*}
Thus the interpolation inequality gives
$\norm{\rho}_{L^p(\mathbb{R}^d)}\le \norm{\rho}_{L^1(\mathbb{R}^d)}^{\frac 1p} \norm{\rho}_{L^\infty(\mathbb{R}^d)}^{1-\frac 1p} \leq C t^{-\frac{d}{2}(1-\frac 1p)}$ by noting that $\norm{\rho}_{L^1(\mathbb{R}^d)}=M$.
\end{proof}

\subsection{Asymptotics without chemical decay ($\alpha=0$)}
In this section, we shall explore the asymptotic behavior of solutions of (\ref{ksf}) with $\alpha=0$ as time $t \to \infty$. From the estimates in Appendix \eqref{grad2}-(\ref{grad3}), we see that in this case the estimate (\ref{ee}) does not hold for both $\tau=0$ and $\tau=1$,  and hence the approach in the preceding section can not be used. In particular, in the case $\tau=1$, from (\ref{grad3}), we see that even the basic inequality (\ref{grads}) does not hold.   However in the case $\tau=0$, we can derive the asymptotic behavior of solutions and so we now consider the following parabolic-elliptic model
\begin{equation}\label{system}
\begin{cases}
\rho_t=\Delta\rho-{\rm div}(\rho \phi(|\nabla S|)\nabla S), \ t>0, \ x\in \R^d\\
-\Delta S=\rho,\\
\rho(0,x)=\rho^0(x),
\end{cases}
\end{equation}
where $\phi$ satisfies the condition (\ref{asF}).

\subsubsection{Two dimensional case ($d=2$)}
To derive the asymptotic decay of solutions in two dimensions, we shall employ the so-called \textit{Method of Trap} (e.g. see \cite{BDEF}), which essentially can assert the following result~:
\begin{lemma}\label{trap}
Let $\varphi(t)$ be a continuous function on $[0,\infty)$ with $\varphi(0)=0$ satisfying the following inequality for some constants $m, \beta, \theta>0$,
$$
\varphi(t)\leq c_0 m+c_1 m^\beta (\varphi(t))^\theta
$$
where $c_0, c_1$ are positive constants. If  $\theta>1$ and $0<\beta<1$, then
$$\varphi(t) <(c_1 \theta m^\beta)^{\frac{1}{1-\theta}}$$
provided that $m<m_0$, where $m_0=[\frac{1}{c_0}\frac{\theta-1}{\theta}(c_1\theta)^{\frac{1}{1-\theta}}]^{\frac{1-\theta}{\beta+\theta-1}}$.
\end{lemma}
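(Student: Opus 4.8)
The plan is to run a continuity (``trapping'') argument on the scalar function $\varphi$, using only the structure of the nonlinear inequality together with the fact that $\varphi$ starts at $0$. Write $F(x):=c_0m+c_1m^\beta x^\theta$ and $g(x):=x-F(x)$ on $[0,\infty)$, so that the hypothesis is exactly $g(\varphi(t))\le 0$ for all $t\ge 0$. First I would analyse $g$: since $g'(x)=1-c_1\theta m^\beta x^{\theta-1}$ and $\theta>1$, the function $g$ is strictly increasing on $[0,x_*]$ and strictly decreasing on $[x_*,\infty)$, where $x_*:=(c_1\theta m^\beta)^{1/(1-\theta)}$ is precisely the bound claimed in the statement; moreover $g(0)=-c_0m<0$ and $g(x)\to-\infty$ as $x\to\infty$ (the term $-c_1m^\beta x^\theta$ dominates because $\theta>1$).

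Next I would compute the maximum value of $g$. From $x_*^{\theta-1}=(c_1\theta m^\beta)^{-1}$ one gets $c_1m^\beta x_*^\theta=x_*/\theta$, hence
\[
g(x_*)=\frac{\theta-1}{\theta}\,x_*-c_0m .
\]
Collecting the powers of $m$ — and using $0<\beta<1$, $\theta>1$, so that the exponent on $m$ that appears is positive — the condition $g(x_*)>0$ is seen to be \emph{exactly} the smallness assumption $m<m_0$, with $m_0$ the constant in the statement. When this holds, $g$ has precisely two zeros $0<x_1<x_*<x_2$, with $g<0$ on $[0,x_1)$, $g>0$ on $(x_1,x_2)$, and $g<0$ on $(x_2,\infty)$. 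In particular the sublevel set $\{x\ge 0:\ g(x)\le 0\}$ equals $[0,x_1]\cup[x_2,\infty)$, a disjoint union of two intervals separated by the nonempty open interval $(x_1,x_2)\ni x_*$.

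To conclude I would invoke continuity of $\varphi$ together with $\varphi(0)=0$. The image $\varphi([0,\infty))$ is connected, it contains $0\in[0,x_1]$, and by hypothesis it is contained in $[0,x_1]\cup[x_2,\infty)$; a connected set lying in this disjoint union of intervals and meeting $[0,x_1]$ must be entirely contained in $[0,x_1]$. Hence $\varphi(t)\le x_1<x_*=(c_1\theta m^\beta)^{1/(1-\theta)}$ for every $t\ge0$, which is the assertion. (Equivalently, if one had $\varphi(t_0)\ge x_2$ for some $t_0>0$, the intermediate value theorem would produce $t_1\in(0,t_0]$ with $\varphi(t_1)\in(x_1,x_2)$, contradicting $g(\varphi(t_1))\le 0$.)

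There is no real analytic obstacle here: the lemma is purely algebraic and topological and uses no PDE structure — it will be applied afterwards with $\varphi(t)$ a suitable (continuous) rescaled norm of $\rho$ or of $\nabla S$ against a reference power of $t$. The only points that demand care are the elementary bookkeeping matching the threshold $m_0$ with the inequality $g(x_*)>0$, and checking that the separating gap $(x_1,x_2)$ is nonempty; both are immediate from $\theta>1$, $0<\beta<1$, and the explicit form of $m_0$.
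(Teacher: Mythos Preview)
Your argument is correct and is precisely the \emph{Method of Trap} that the paper invokes; the paper itself gives no proof at all, merely citing [BDEF] and omitting the details. One minor bookkeeping remark: if you carry the algebra for $g(x_*)>0$ through, the exponent on the bracket defining $m_0$ comes out as $\tfrac{\theta-1}{\beta+\theta-1}$ rather than $\tfrac{1-\theta}{\beta+\theta-1}$ as printed in the statement --- this appears to be a sign typo in the paper, not a flaw in your reasoning.
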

\begin{proof}
This result can be proved directly by applying the \textit{Method of Trap} in \cite{BDEF}. So we omit the details.
\end{proof}

We will make use of Lemma \ref{trap} to prove our results. To this end,  we first use Duhamel's principle to write the solution $\rho(t,x)$ as
\begin{equation*}
\begin{split}
\rho(t_0+t,x)=
\int_{\mathbb{R}^2}G(t,x-y)\rho(t_0,y)dy
-\int_0^t\nabla G(t-s)*(\rho \phi(|\nabla S|)\nabla S)(t_0+s)ds.
\end{split}
\end{equation*}
where $G(t,x)=\frac{1}{4\pi t}\exp(-\frac{|x|^2}{4t})$ is the heat kernel on $\R^2$. Then by Young's convolution inequality and Lemma \ref{Gheat} in Appendix, we have for $\varrho>2$
\begin{equation}\label{02}
\begin{split}
\norm{\rho(t_0+t)}_{L^\infty(\mathbb{R}^2)}\le & \frac{1}{4\pi t}\norm{\rho(t_0)}_{L^1(\mathbb{R}^2)}\\
&+ A_0 C(\varrho) \int_0^t (t-s)^{-\frac{1}{\varrho}-\frac{1}{2}}\norm{(\rho \nabla S)(t_0+s)}_{L^\varrho(\mathbb{R}^2)}ds.
\end{split}
\end{equation}
where the boundedness of $\phi$ has been used (cf. \eqref{asF}).

On one hand, the H\"older inequality gives us that
\begin{equation}\label{04}
\begin{split}
\norm{(\rho \nabla S)(t_0+s)}_{L^\varrho(\mathbb{R}^2)}
&\leq \norm{\rho(t_0+s)}_{L^p(\mathbb{R}^2)}\norm{\nabla S(t_0+s)}_{L^q(\mathbb{R}^2)},
\end{split}
\end{equation}
where $\frac 1\varrho=\frac 1p+\frac 1q$, $q>2$. On the other hand,
by estimate (\ref{HLIN}) in Appendix, there exists a constant $C(r)$ depending on $r$ such that, for any $s>0$,
\begin{equation}\label{05}
\begin{split}
\norm{\nabla S(t_0+s)}_{L^q(\mathbb{R}^2)}\le C(r)\norm{\rho(t_0+s)}_{L^r(\mathbb{R}^2)},
\end{split}
\end{equation}
where $r=\frac{2q}{2+q}$ (or $\frac 1r=\frac 1q+\frac 12$), $1<r<2$. Substituting \eqref{05} into \eqref{04} gives
\begin{equation}\label{ine}
\begin{split}
\norm{(\rho \nabla S)(t_0+s)}_{L^\varrho(\mathbb{R}^2)}
\leq C(r) \norm{\rho(t_0+s)}_{L^p(\mathbb{R}^2)}\norm{\rho(t_0+s)}_{L^r(\mathbb{R}^2)},
\end{split}
\end{equation}
with $\frac 1\varrho=\frac 1p+\frac 1r-\frac 12$. Then we apply the interpolation inequality
\begin{equation*}
\begin{split}
\norm{\rho}_{L^\gamma(\mathbb{R}^2)}\le \norm{\rho}_{L^1(\mathbb{R}^2)}^{\frac 1\gamma} \norm{\rho}_{L^\infty(\mathbb{R}^2)}^{1-\frac 1\gamma}\le M^{\frac 1\gamma} \norm{\rho}_{L^\infty(\mathbb{R}^2)}^{1-\frac 1\gamma}, \quad 1<\gamma<\infty,
\end{split}
\end{equation*}
to (\ref{ine}) and get
\begin{equation}\label{06}
\begin{split}
\norm{(\rho \nabla S)(t_0+s)}_{L^\varrho(\mathbb{R}^2)}
 \leq C(r) M^{\frac 1p+\frac 1r} \norm{\rho(t_0+s)}_{L^\infty(\mathbb{R}^2)}^{2-\frac 1p-\frac 1r}.
 \end{split}
\end{equation}
Substituting  \eqref{06}  into \eqref{02}, we find a constant $C_0=C_0(\varrho, A_0, r)>0$ such that
\begin{equation}\label{06n}
\begin{split}
\norm{\rho(t_0+t)}_{L^\infty(\mathbb{R}^2)}
\leq \frac{M}{4\pi t}+C_0 M^{\frac 1p+\frac 1r} \int_0^t  (t-s)^{-\frac{1}{\varrho}-\frac{1}{2}}\norm{\rho(t_0+s)}_{L^\infty(\mathbb{R}^2)}^{2-\frac 1p-\frac 1r}ds.
\end{split}
\end{equation}
We recall the exponents present above satisfy
$
\frac 1p+\frac 1q=\frac 1\varrho (\varrho>2)$ and $\frac 1r-\frac 1q=\frac 12 (1<r<2).$
Then it can be checked that $\frac{1}{\varrho}+\frac{1}{2}=\frac 1p+\frac 1r$, hence $\frac 1p+\frac 1r<1$ since $\varrho>2$.
Taking $t_0=t$, and then multiplying the inequality (\ref{06n}) by $2t$, we get
\begin{equation}\label{ine1}
\begin{split}
&2t\norm{\rho(2t)}_{L^\infty(\mathbb{R}^2)}-\frac{M}{2\pi }\\[2mm]
&\leq 2C_0 M^\xi t\int_0^t  (t-s)^{-\xi}(t+s)^{\xi-2} \left[(t+s)\norm{\rho(t+s)}_{L^\infty(\mathbb{R}^2)}\right]^{2-\xi}ds,
\end{split}
\end{equation}
where $\xi=\frac 1p+\frac 1r$. Simple calculation gives us that
\begin{equation*}
  t\int_0^t  (t-s)^{-\xi}(t+s)^{\xi-2} ds=\frac{1}{2(1-\xi)}, \quad 0<\xi<1.
\end{equation*}
For any $t>0$, we observe that
\begin{equation*}
 \sup_{0\le s\le t} (t+s)\norm{\rho(t+s)}_{L^\infty(\mathbb{R}^2)}\le  \sup_{0\le s\le t}2s\norm{\rho(2s)}_{L^\infty(\mathbb{R}^2)}=\varphi(t).
\end{equation*}
Since $\rho\in C^0(\mathbb{R}^+; L^\infty(\mathbb{R}^2))$, $\varphi$ is continuous, then it follows from (\ref{ine1}) that
\begin{equation}\label{ine2}
 \varphi(t)\le \frac{M}{2\pi}+C_1M^\xi \varphi^\theta(t),
\end{equation}
where $\displaystyle C_1=\frac{C_0}{1-\xi}$, $\displaystyle\theta=2-\xi$.
Since $0<\xi<1$, then $1<\theta<2$.


Once we have inequality (\ref{ine2}) with $\theta>1$ and $0<\xi<1$, we can apply Lemma \ref{trap} to find a constant $C>0$ such that $\varphi(t) \leq C$ for any $t>0$ if $M$ is small. This gives us that $\|\rho(t)\|_{L^\infty(\R^2)} \leq C t^{-1}$. Then by interpolation we can get
$$\|\rho(t)\|_{L^p(\R^2)} \leq Ct^{-(1-\frac{1}{p})}.$$


\subsubsection{Higher dimensional case ($d>2$)}

Setting $\phi(|\nabla S|)\nabla S=U$ and multiplying the first equation of \eqref{system} by $p\rho^{p-1}$ $(p>1)$, we have
\begin{equation}\label{1n}
\begin{split}
\frac{d}{dt}\int_{\mathbb{R}^d}\rho^pdx+\frac{4(p-1)}{p}\int_{\mathbb{R}^d}|\nabla \rho^{\frac{p}{2}}|^2 \,dx& = p(p-1)\int_{\mathbb{R}^d} \rho^{p-1} U\cdot\nabla \rho \, dx\\
& = 2(p-1)\int_{\mathbb{R}^d} \rho^{\frac p2} U\cdot \nabla \rho^{\frac p2} \, dx.
\end{split}
\end{equation}
For the term on the right hand side of \eqref{system}, we employ Cauchy-Schwartz inequality to get
\begin{equation*}
\begin{split}
2(p-1)\int_{\mathbb{R}^d} \rho^{\frac p2} U\cdot\nabla \rho^{\frac p2} \,dx
\le \frac{p-1}{p}\int_{\mathbb{R}^d}|\nabla \rho^{\frac{p}{2}}|^2dx+p(p-1)\int_{\mathbb{R}^d} \rho^p|U|^2dx,
\end{split}
\end{equation*}
which updates \eqref{1n} as
\begin{equation}\label{2}
\begin{split}
\frac{d}{dt}\int_{\mathbb{R}^d}\rho^pdx+\frac{3(p-1)}{p}\int_{\mathbb{R}^d}|\nabla \rho^{\frac{p}{2}}|^2 \,dx \le & p(p-1)\int_{\mathbb{R}^d} \rho^p|U|^2\,dx\\
\le & p(p-1) A_0^2\int_{\mathbb{R}^d} \rho^p|\nabla S|^2\,dx,
\end{split}
\end{equation}
where we have used \eqref{asF} for the last inequality.
With the Sobolev inequality
$$
\norm{\rho}_{L^{\frac{dp}{d-p}}(\mathbb{R}^d)}\leq C_1(d,p) \norm{\nabla \rho}_{L^{p}(\mathbb{R}^d)},$$
for some constant $C_1(d,p)>0$,
we have that
\begin{equation*}
\begin{split}
\norm{\rho}^{\frac p2}_{L^{\frac{dp}{d-2}}(\mathbb{R}^d)}=&\left[\left(\int_{\mathbb{R}^d}\rho^{\frac{dp}{d-2}}dx\right)^{\frac{d-2}{dp}}\right]^{\frac p2}= \left[\int_{\mathbb{R}^d}(\rho^{\frac p2})^{\frac{2d}{d-2}}dx\right]^{\frac {d-2}{2d}}\\[2mm]
=& \norm{\rho^{\frac p2}}_{L^{\frac{2d}{d-2}}(\mathbb{R}^d)}\le C_1(d,p) \norm{\nabla\rho^{\frac p2}}_{L^2(\mathbb{R}^d)}.
\end{split}
\end{equation*}
This together with \eqref{2} gives
\begin{equation}\label{3}
\begin{split}
\frac{d}{dt}\int_{\mathbb{R}^d}\rho^pdx+C_2(d,p)\norm{\rho}^{p}_{L^{\frac{dp}{d-2}}(\mathbb{R}^d)}
\le p(p-1) A_0^2\int_{\mathbb{R}^d} \rho^p|\nabla S|^2dx.
\end{split}
\end{equation}
By H\"older inequality, we have
\begin{equation*}
\begin{split}
\int_{\mathbb{R}^d}\rho^p|\nabla S|^2dx\leq  \left[\int_{\mathbb{R}^d} (\rho^p)^{\frac{d}{d-2}}dx\right]^{\frac{d-2}{d}}\left(\int_{\mathbb{R}^d} (|\nabla S|^2)^\frac d2dx\right)^{\frac 2d}
\leq \norm{\rho}^{p}_{L^{\frac{dp}{d-2}}(\mathbb{R}^d)}\norm{\nabla S}_{L^d(\mathbb{R}^d)}^2,
\end{split}
\end{equation*}
which updates \eqref{3} as
\begin{equation}\label{4}
\begin{split}
\frac{d}{dt}\int_{\mathbb{R}^d}\rho^pdx+C_2(d,p)\norm{\rho}^{p}_{L^{\frac{dp}{d-2}}(\mathbb{R}^d)}
\le p(p-1) A_0^2 \norm{\rho}^{p}_{L^{\frac{dp}{d-2}}(\mathbb{R}^d)}\norm{\nabla S}_{L^d(\mathbb{R}^d)}.
\end{split}
\end{equation}
Now we apply the inequality (\ref{HLIN}) with $q=d$ (i.e. $\frac{d{p}}{d-{p}}=d$ and hence ${p}=\frac d2$) and get
\begin{equation*}
\begin{split}
\norm{\nabla S}_{L^d(\mathbb{R}^d)}\leq& c(d,{p})\norm{\rho}_{L^{\frac d2}(\mathbb{R}^d)}\leq C(d,{p}) \norm{\rho}_{L^1(\mathbb{R}^d)}^{\frac 2d}\norm{\rho}_{L^\infty(\mathbb{R}^d)}^{1-\frac 2d}
\leq C_3(d) M^{\frac 2d}.
\end{split}
\end{equation*}
Substituting above inequality into \eqref{4}, one has
\begin{equation}\label{5}
\begin{split}
\frac{d}{dt}\int_{\mathbb{R}^d}\rho^pdx+C_2(d,p)\norm{\rho}^{p}_{L^{\frac{dp}{d-2}}(\mathbb{R}^d)}
\le C_4(d,p)p(p-1)A_0^2M^{\frac 4d} \norm{\rho}^{p}_{L^{\frac{dp}{d-2}}(\mathbb{R}^d)},
\end{split}
\end{equation}
where $C_4(d,p) = C_3(d)p(p-1)$. Let $B=C_2(d,p)-C_4(d,p)A_0^2M^{\frac 4d}>0$ for small $M>0$, it follows from (\ref{5}) that
\begin{equation}\label{6}
\begin{split}
\frac{d}{dt}\int_{\mathbb{R}^d}\rho^pdx+B\norm{\rho}^{p}_{L^{\frac{dp}{d-2}}(\mathbb{R}^d)}
\le 0.
\end{split}
\end{equation}
By interpolation inequality, we know
\begin{equation*}
\begin{split}
\norm{\rho}_{L^p(\mathbb{R}^d)}\le \norm{\rho}_{L^1(\mathbb{R}^d)}^{\frac{2}{d(p-1)+2}}\ \norm{\rho}_{L^{\frac{dp}{d-2}}(\mathbb{R}^d)}^{\frac{d(p-1)}{d(p-1)+2}}=M^{\frac{2}{d(p-1)+2}}\norm{\rho}_{L^{\frac{dp}{d-2}}(\mathbb{R}^d)}^{\frac{d(p-1)}{d(p-1)+2}},
\end{split}
\end{equation*}
which implies
\begin{equation*}
\begin{split}
\left(\int_{\mathbb{R}^d}\rho^p dx\right)^{\frac{d(p-1)+2}{d(p-1)}}\ M^{-\frac{2p}{d(p-1)}}\le \norm{\rho}_{L^{\frac{dp}{d-2}}(\mathbb{R}^d)}^p.
\end{split}
\end{equation*}
This together with \eqref{6} gives
\begin{equation*}
\begin{split}
\frac{d}{dt}\int_{\mathbb{R}^d}\rho^pdx+B M^{-\frac{2p}{d(p-1)}}\, \left(\int_{\mathbb{R}^d}\rho^p dx\right)^{\frac{d(p-1)+2}{d(p-1)}}
\le 0.
\end{split}
\end{equation*}
Then solving above ODE gives
\begin{equation*}
\begin{split}
\norm{\rho}_{L^p(\mathbb{R}^d)} \le C(d,p,M)(1+t)^{-\frac{d(p-1)}{2p}}\le C(d,p,M)(1+t)^{-\frac{d}{2}(1-\frac 1p)}, \quad 1<p\leq\infty,
\end{split}
\end{equation*}
for some constant $C(d,p,M)>0$.

\section{Conclusion and open questions}

Motivated both by the properties of solutions as traveling pulses and some specific derivations from kinetic models,  we have considered the flux-limited Keller-Segel (FLKS) system \eqref{ksf} on the whole domain $\R^d$. First, we have introduced a new generic rescaling which allows a systematic derivation as  the limit of kinetic systems describing the chemotactic motion thanks to a run-and-tumble process.
Then, since solutions exist globally in time, we have investigated the long time asymptotic of FLKS and shown that when the degradation coefficient $\alpha>0$, diffusion takes the advantage over attraction and solutions decays in time with the same rate as solutions to the heat equation.
When $\alpha=0$, we investigated radially symmetric steady states and established that the total mass to the system $M$ is an important parameter. Indeed, in dimension $d=2$, radial symmetric solutions exist if and only if $M>\frac{8\pi}{\phi(0)}$. In dimension $d>2$ there is no positive radial steady state with finite mass.

However, we have been able to prove the long time convergence towards the radial steady state only in the particular case of dimension $d=1$. Then, we leave open the question of convergence of solutions to FLKS in dimension $d>1$. In particular, we established in Theorem \ref{thm} that, for $d=2,3$, $\alpha=0$, and $\tau=0$, solutions decay in time when $M$ is small enough. An interesting issue is to prove that the critical mass for this behaviour in dimension $d=2$ is given by $M^*=\frac{8\pi}{\phi(0)}$. We also leave  as an open question the case $\tau=1$ and $\alpha=0$, for which our approach may not be applied.

Finally, in a recent work \cite{PTV}, it has been proved that kinetic system for chemotaxis may be derived from a more elaborated system at mesoscopic scale including internal variables describing for instance the methylation level within cells. Then we may expect that the FLKS system may be derived directly from such system. The proof of such derivation is also an interesting continuation of this work.

\appendix

\section{Technical lemma}
\renewcommand{\theequation}{\thesection.\arabic{equation}}

For the sake of completeness, we present in this appendix some usefull technical estimations on the parabolic/elliptic equation satisfied by the chemical concentration $S$ in system \eqref{ksf}. We recall the notation of the heat kernel
$$
G(t,x)=\frac{1}{(4\pi t)^{d/2}}\exp\left(-\frac{|x|^2}{4t}\right), \quad x \in \R^d, \ t>0.
$$
With simple calculations, we verify the following estimates,
\begin{equation}\label{hk}
\|\partial_t^l\partial_x^kG(t)\|_{L^p(\R^d)}\leq C t^{-\frac{d}{2}(1-\frac{1}{p})-l-\frac{k}{2}}, \quad 1\leq q \leq q\leq \infty.
\end{equation}
Define
$$
(e^{t\Delta} f)(x)=G(t)\ast f(x)=\int_{\R^d} G(t,x-y)f(y)dy.$$
Then using (\ref{hk}) and Young's convolution inequality, the following $L^p$-$L^q$ estimates for the operator $e^{t\Delta}$ can be easily proved.
\begin{lemma}\label{Gheat}
Let $1\leq q \leq p\leq \infty$ and $f \in L^p(\R^d)$. Then
\begin{enumerate}
\item $\|e^{t\Delta} f\|_{L^p(\R^d)}\leq C t^{-\frac{d}{2}(\frac{1}{q}-\frac{1}{p})}\|f\|_{L^q(\R^d)}$;
\item $\|\nabla \big(e^{t\Delta} f\big)\|_{L^p(\R^d)}\leq C t^{-\frac{d}{2}(\frac{1}{q}-\frac{1}{p})-\frac{1}{2}}\|f\|_{L^q(\R^d)}$.
\end{enumerate}
\end{lemma}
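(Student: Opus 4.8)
The plan is to derive both inequalities directly from the explicit representation $e^{t\Delta}f=G(t)\ast f$, the kernel bound \eqref{hk}, and Young's convolution inequality; no compactness or nonlinear machinery is needed.

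First I would fix $1\le q\le p\le\infty$ and introduce the Young exponent $r\in[1,\infty]$ defined by $1+\frac1p=\frac1r+\frac1q$, equivalently $1-\frac1r=\frac1q-\frac1p$. This $r$ is admissible precisely because $q\le p$ forces $\frac1q-\frac1p\in[0,1]$, hence $\frac1r\in[0,1]$. Young's inequality then gives
\[
\|e^{t\Delta}f\|_{L^p(\R^d)}=\|G(t)\ast f\|_{L^p(\R^d)}\le \|G(t)\|_{L^r(\R^d)}\,\|f\|_{L^q(\R^d)},
\]
and applying \eqref{hk} with $l=k=0$ and exponent $r$ gives $\|G(t)\|_{L^r(\R^d)}\le C\,t^{-\frac d2(1-\frac1r)}=C\,t^{-\frac d2(\frac1q-\frac1p)}$, which is exactly item (1).

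For item (2) I would note that differentiation commutes with the convolution, $\nabla\big(e^{t\Delta}f\big)=\nabla G(t)\ast f$, and then repeat the identical argument with $\nabla G$ in place of $G$: Young's inequality with the same exponent $r$ gives $\|\nabla G(t)\ast f\|_{L^p(\R^d)}\le \|\nabla G(t)\|_{L^r(\R^d)}\|f\|_{L^q(\R^d)}$, while \eqref{hk} with $l=0$, $k=1$ gives $\|\nabla G(t)\|_{L^r(\R^d)}\le C\,t^{-\frac d2(1-\frac1r)-\frac12}=C\,t^{-\frac d2(\frac1q-\frac1p)-\frac12}$, proving item (2).

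There is essentially no genuine obstacle here; the only points requiring a moment of care are the verification of the exponent of $t$ in \eqref{hk} (a one-line rescaling $x=\sqrt t\,y$ in the Gaussian, which reduces $\|\partial_x^k G(t)\|_{L^r}$ to $t^{-\frac d2(1-\frac1r)-\frac k2}$ times the $t$-independent constant $\|\partial_x^k G(1)\|_{L^r}$), and the endpoint cases $q=1$ (so $r=p$) and $p=\infty$ (so $r=q$), for which Young's inequality still applies verbatim. I would state and prove the two items together, since the argument for (2) is a line-by-line repetition of that for (1).
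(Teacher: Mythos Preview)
Your proof is correct and follows exactly the route the paper indicates: the paper simply states that the lemma follows from the kernel estimate \eqref{hk} together with Young's convolution inequality, and your argument spells this out in full detail (choice of the Young exponent $r$ with $1-\frac1r=\frac1q-\frac1p$, application of \eqref{hk} with $k=0$ for (1) and $k=1$ for (2)). There is nothing to add or correct.
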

Then, we state the following estimates~:
\begin{lemma}\label{g}
Let $\rho$ be given and let $S$ be a solution, for $\tau\in\{0,1\}$ and $\alpha\geq 0$, to
$$
\tau \partial_t S - \Delta S + \alpha S = \rho, \qquad S(0,x) = S^0(x) \mbox{ if } \tau = 1.
$$
Then there is a constant $C>0$ such that the following hold:
\begin{enumerate}
\item When $\alpha>0$, then
\begin{eqnarray}\label{grad1}
\begin{aligned}
&\|\nabla S(t)\|_{L^p(\R^d)}\leq C \|\rho(t)\|_{L^q(\R^d)}, \quad \mathrm{for}\ \tau=0;\\
&\|\nabla S(t)\|_{L^p(\R^d)}\leq e^{-\alpha t}\|\nabla S^0\|_{L^p(\R^d)}+C \Gamma(\beta) \sup\limits_{0<s<t} \|\rho(s)\|_{L^q(\R^d)}, \quad \mathrm{for}\ \tau=1;
\end{aligned}
\end{eqnarray}
where $1\leq q\leq p\leq \infty, \frac{1}{q}<\frac{1}{p}+\frac{1}{d}$ and $\beta=\frac{1}{2}-\frac{d}{2}(\frac{1}{q}-\frac{1}{p})>0$ and $\Gamma$ is the gamma function.
\item When $\alpha=0$, then
\begin{eqnarray}\label{grad2}
\|\nabla S(t)\|_{L^p(\R^d)}\leq C \|\rho(t)\|_{L^q(\R^d)}, \quad \frac{1}{q}=\frac{1}{p}+\frac{1}{d}, \ \mathrm{for}\ \tau=0;
\end{eqnarray}
and
\begin{eqnarray}\label{grad3}
\|\nabla S(t)\|_{L^p(\R^d)}\leq \|\nabla S^0\|_{L^p(\R^d)}+C t^{\beta} \sup\limits_{0<s<t} \|\rho(s)\|_{L^q(\R^d)}, \quad \mathrm{for}\ \tau=1;
\end{eqnarray}
where $1\leq q\leq p\leq \infty, \frac{1}{q}<\frac{1}{p}+\frac{1}{d}$ and $\beta=\frac{1}{2}-\frac{d}{2}(\frac{1}{q}-\frac{1}{p})>0$.
\end{enumerate}
\end{lemma}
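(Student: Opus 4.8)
The plan is to treat the four sub-cases ($\tau\in\{0,1\}$, with $\alpha$ positive or zero) by writing $S$, or directly $\nabla S$, through an explicit representation formula and then inserting the $L^p$--$L^q$ smoothing estimates of Lemma~\ref{Gheat}, so that everything reduces to estimating a single one-parameter time integral. A convenient bookkeeping device: with $\beta:=\tfrac12-\tfrac d2(\tfrac1q-\tfrac1p)$, the decay exponent in Lemma~\ref{Gheat}(2) is exactly $-\tfrac d2(\tfrac1q-\tfrac1p)-\tfrac12=\beta-1$, so the relevant time integrals have the form $\int_0^{t}e^{-\alpha\sigma}\sigma^{\beta-1}\,d\sigma$, and the hypothesis $\tfrac1q<\tfrac1p+\tfrac1d$ is precisely what makes them converge at $\sigma=0$.

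First I would handle $\tau=1$ (for any $\alpha\ge0$) via Duhamel's formula, $S(t)=e^{-\alpha t}e^{t\Delta}S^0+\int_0^t e^{-\alpha(t-s)}e^{(t-s)\Delta}\rho(s)\,ds$. Since $\nabla$ commutes with $e^{t\Delta}$ and $e^{t\Delta}$ is an $L^p$-contraction, the first term is bounded by $e^{-\alpha t}\|\nabla S^0\|_{L^p}$; for the Duhamel integral I apply $\nabla$ under the integral sign and Lemma~\ref{Gheat}(2) to obtain a bound $C\big(\sup_{0<s<t}\|\rho(s)\|_{L^q}\big)\int_0^t e^{-\alpha\sigma}\sigma^{\beta-1}\,d\sigma$. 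For $\alpha>0$ the last integral is dominated by $\alpha^{-\beta}\Gamma(\beta)$, which yields the $\tau=1$ line of \eqref{grad1}; for $\alpha=0$ it equals $t^{\beta}/\beta$, which yields \eqref{grad3}. Next, for $\tau=0$ and $\alpha>0$ I would represent the resolvent through the heat semigroup, $S=\int_0^\infty e^{-\alpha t}e^{t\Delta}\rho\,dt$, so that $\nabla S=\int_0^\infty e^{-\alpha t}\nabla e^{t\Delta}\rho\,dt$ and Lemma~\ref{Gheat}(2) gives $\|\nabla S\|_{L^p}\le C\|\rho\|_{L^q}\int_0^\infty e^{-\alpha t}t^{\beta-1}\,dt=C\alpha^{-\beta}\Gamma(\beta)\|\rho\|_{L^q}$, i.e.\ the $\tau=0$ line of \eqref{grad1}. (Equivalently, this is the classical Bessel-potential bound, since the kernel of $\nabla(-\Delta+\alpha)^{-1}$ is $O(|x|^{-(d-1)})$ near the origin and exponentially small at infinity, hence lies in $L^s$ for $1\le s<d/(d-1)$, and Young's inequality reproduces the same admissible range $\tfrac1q<\tfrac1p+\tfrac1d$.)

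The hard part — and the only step that is not pure bookkeeping — will be the remaining case $\tau=0$, $\alpha=0$. Now $-\Delta S=\rho$, so $\nabla S=\nabla(-\Delta)^{-1}\rho$, a convolution against a kernel homogeneous of degree $-(d-1)$, which belongs to no Lebesgue space; Young's inequality is unavailable, and indeed the time-integral argument used for $\alpha>0$ breaks down because $\int_0^\infty t^{\beta-1}\,dt$ diverges at infinity once the exponential weight is gone. Instead I would use the scale-invariant estimate: writing $\partial_j(-\Delta)^{-1}=R_j(-\Delta)^{-1/2}$ with $R_j$ the Riesz transform (bounded on $L^p$ for $1<p<\infty$) and $(-\Delta)^{-1/2}$ the Riesz potential $I_1$, the Hardy--Littlewood--Sobolev inequality gives $\|I_1\rho\|_{L^p}\le C\|\rho\|_{L^q}$ precisely when $\tfrac1q=\tfrac1p+\tfrac1d$; composing the two bounds proves \eqref{grad2}. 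This is exactly why \eqref{grad2} carries the equality $\tfrac1q=\tfrac1p+\tfrac1d$ rather than the open condition of the other three estimates, and why one must stay in the range $1<q\le p<\infty$ for the Riesz operators to be bounded.
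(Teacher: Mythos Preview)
Your proposal is correct and follows essentially the same route as the paper: Duhamel's formula plus Lemma~\ref{Gheat} for the parabolic cases $\tau=1$, and kernel estimates for the elliptic cases $\tau=0$. The only cosmetic differences are that for $\tau=0$, $\alpha>0$ the paper uses the Bessel potential kernel $\nabla\mathcal K_\alpha\in L^\gamma$ ($1\le\gamma<\tfrac{d}{d-1}$) together with Young's inequality (which you mention as an equivalent alternative), while for $\tau=0$, $\alpha=0$ the paper writes $\nabla S=\lambda_d\,\tfrac{x}{|x|^d}\ast\rho$ and applies Hardy--Littlewood--Sobolev directly, without passing through the Riesz-transform factorization $R_j I_1$; both routes yield the same estimate under the same restriction $1<q<p<\infty$.
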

\begin{proof}
(1) We first consider the case $\alpha>0$. When $\tau=0$, the solution $S$ can be written as $S(x)=\mathcal{K}_\alpha \ast \rho(x)$ where $\mathcal{K}_\alpha$ is the Bessel potential:
$$
\mathcal{K}_\alpha(x)=\frac{1}{4\pi\alpha} \int_0^\infty \exp\Big(-\frac{\pi \alpha |x|^2}{4s}-\frac{s}{4\pi}\Big)s^{1-\frac{d}{2}}\frac{ds}{s},
$$
which satisfies the following property (e.g. see \cite{ENV})
$$
\|\nabla \mathcal{K}_\alpha\|_{L^\gamma(\R^d)} \leq C, \quad 1\leq \gamma <\frac{d}{d-1},
$$
for some constant $C>0$. Then by the Young's convolution inequality, we have
$$
\|\nabla S\|_{L^p(\R^d)}=\|\nabla \mathcal{K}_\alpha \ast \rho\|_{L^p(\R^d)}\leq \|\nabla \mathcal{K}_\alpha\|_{L^\gamma(\R^d)} \|\rho\|_{L^q(\R^d)} \leq C \|\rho\|_{L^q(\R^d)},
$$
where $\frac{1}{q}=\frac{1}{p}+1-\frac{1}{\gamma}<\frac{1}{p}+\frac{1}{d}$. This gives the first inequality of (\ref{grad1}).

When $\tau=1$, we use the Duhamel's principle to write
$$
S(t)=e^{-\alpha t}e^{t\Delta}S^0+\int_0^t e^{-\alpha (t-s)} e^{(t-s)\Delta} \rho(s)ds,
$$
which gives rise thanks to Lemma \ref{Gheat} to
\begin{eqnarray}\label{ss}
\begin{aligned}
\|\nabla S(t)\|_{L^p(\R^d)}&\leq e^{-\alpha t}\|\nabla S^0\|_{L^p(\R^d)}+C\int_0^t e^{-\alpha (t-s)}(t-s)^{-\frac d2 (\frac 1q-\frac 1p)-\frac 12}\|\rho(s)\|_{L^q(\R^d)}ds\\
& \leq e^{-\alpha t} \|\nabla S^0\|_{L^p(\R^d)}+C\sup\limits_{0<s<t} \|\rho(s)\|_{L^q(\R^d)} \int_0^\infty e^{-\alpha s}s^{\beta-1}ds,
\end{aligned}
\end{eqnarray}
which yields the second inequality of \eqref{grad1}.

(2) Next we prove the case $\alpha=0$. When $\tau=0$, then from the second equation of (\ref{ksf}), we have
$$S(t,x)=\int_{\R^d} \mathcal{K}_0(x-y)\rho(t,y)dt,$$
where $\mathcal{K}_0(x)$ is the Poisson kernel given by
\begin{eqnarray*}
\mathcal{K}_0(x)=
\begin{cases}
-\frac{1}{2\pi}\log|x|, \ & d=2,\\
\frac{1}{d(d-2)\gamma(d)}\cdot \frac{1}{|x|^{d-2}},\ & d\geq 3,
\end{cases}
\end{eqnarray*}
with $\gamma(d)=\frac{\pi^{d/2}}{\Gamma(\frac{d}{2}+1)}$ denoting the volume of unit ball in $\R^d$. Then it can be easily checked that
\begin{equation}\label{gradsn}
\nabla S(t,x)=\lambda_d \int_{\R^d} \frac{x-y}{|x-y|^d}\rho(t,y)dy=\lambda_d \frac{x}{|x|^d}\ast \rho,
\end{equation}
for $\lambda_d$ a positive constant.
Recall the Hardy-Littlewood-Sobolev inequality:
Let $1<p<q<\infty$ with $\frac{1}{p}=\frac{1}{q}+\frac{\sigma}{d}$. There exists a constant $C=C(d, \sigma, p)$ such that for $f \in L^p(\R^d)$,  then
\begin{equation}\label{HLS}
\bigg\|\int_{\R^d} \frac{f(y)}{|x-y|^{d-\sigma}}dy\bigg\|_{L^q(\R^d)}\leq C \|f\|_{L^p(\R^d)}.
\end{equation}
Then applying the Hardy-Littlewood-Sobolev  inequality (\ref{HLS}) with $\sigma=1$ to (\ref{gradsn}), we get the following estimate:
\begin{equation}\label{HLIN}
\norm{\nabla S}_{L^q(\mathbb{R}^d)}\leq C(d,p) \norm{\rho}_{L^{{p}}(\mathbb{R}^d)},\ q=\frac{d{p}}{d-{p}},
\end{equation}
which gives the first inequality of (\ref{grad2}).

The second inequality of (\ref{grad2}) results from (\ref{ss}) directly by letting $\alpha=0$. Thus the proof is completed.
\end{proof}

\begin{lemma}[Singular Gronwall's inequality \cite{NSU}]\label{gronwall}
Suppose $T>0$, $b\geq 0$ and $\beta>0$. Let $a(t)$ and $f(t)$ be two nonnegative functions locally integrable on $0\leq t<T<\infty$ with
$$f(t)\leq a(t)+b\int_0^t(t-s)^{\beta-1}f(s)ds, \quad 0 \leq t <T.$$
Then there is a constant $C_\beta$ depending on $\beta$ such that
$$f(t)\leq a(t)+b\Gamma(\beta) C_\beta \int_0^t (t-s)^{\beta-1}a(s)ds, \quad 0\leq t<T.$$
\end{lemma}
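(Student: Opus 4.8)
The plan is to prove this by a Picard-type iteration of the hypothesis, exploiting the fact that the singular kernel $t^{\beta-1}$ behaves well under convolution. First I would substitute the assumed inequality into its own right-hand side, replacing $f(s)$ by $a(s)+b\int_0^s(s-\sigma)^{\beta-1}f(\sigma)\,d\sigma$, and use Fubini's theorem together with the Beta-function identity $\int_\sigma^t(t-s)^{\beta-1}(s-\sigma)^{\beta-1}\,ds=\frac{\Gamma(\beta)^2}{\Gamma(2\beta)}(t-\sigma)^{2\beta-1}$; more generally, the $k$-fold convolution of $t^{\beta-1}$ with itself equals $\frac{\Gamma(\beta)^k}{\Gamma(k\beta)}t^{k\beta-1}$. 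Iterating $n$ times yields, for all $0\le t<T$,
\[
f(t)\le\sum_{k=0}^{n-1}\frac{\big(b\,\Gamma(\beta)\big)^k}{\Gamma(k\beta)}\int_0^t(t-s)^{k\beta-1}a(s)\,ds+\frac{\big(b\,\Gamma(\beta)\big)^n}{\Gamma(n\beta)}\int_0^t(t-s)^{n\beta-1}f(s)\,ds,
\]
with the convention that the $k=0$ summand is simply $a(t)$.

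Next I would check that the remainder (the last term above) tends to zero as $n\to\infty$. For $n$ large enough that $n\beta\ge1$ one has $(t-s)^{n\beta-1}\le T^{n\beta-1}$ on $[0,t]$, so the remainder is bounded by $\frac{(b\,\Gamma(\beta)\,T^{\beta})^{n}}{T\,\Gamma(n\beta)}\,\|f\|_{L^1(0,T)}$, which converges to $0$ because $\Gamma(n\beta)$ grows faster than any geometric sequence in $n$, while $\|f\|_{L^1(0,T)}<\infty$ by the local-integrability hypothesis.

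Finally I would pass to the limit and collapse the series. For $k\ge1$ and $0\le s\le t<T$, writing $(t-s)^{k\beta-1}=(t-s)^{\beta-1}(t-s)^{(k-1)\beta}\le T^{(k-1)\beta}(t-s)^{\beta-1}$ gives
\[
\sum_{k=1}^\infty\frac{\big(b\,\Gamma(\beta)\big)^k}{\Gamma(k\beta)}\int_0^t(t-s)^{k\beta-1}a(s)\,ds\le b\,\Gamma(\beta)\,\Big(\sum_{k=1}^\infty\frac{\big(b\,\Gamma(\beta)\,T^{\beta}\big)^{k-1}}{\Gamma(k\beta)}\Big)\int_0^t(t-s)^{\beta-1}a(s)\,ds.
\]
The bracketed series is absolutely convergent; denoting its sum by $C_\beta$ (a constant depending on $\beta$, and also on $b$ and $T$) we obtain exactly $f(t)\le a(t)+b\,\Gamma(\beta)\,C_\beta\int_0^t(t-s)^{\beta-1}a(s)\,ds$.

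There is no genuine obstacle here; the only points needing care are bookkeeping ones: justifying the convolution identity for powers (equivalently, evaluating the Beta integral), keeping track of the factors $\Gamma(\beta)^k/\Gamma(k\beta)$ so that the remainder is killed by the super-exponential growth of the Gamma function, and noting that all the integrals are finite thanks to local integrability of $a$ and $f$ on $[0,T)$. If one only wants a bound with the same kernel $(t-s)^{\beta-1}$ on a fixed finite interval (as is needed for Lemma~\ref{rho1}), this crude collapse of the series suffices; a sharper statement would instead express the majorant through a Mittag-Leffler function.
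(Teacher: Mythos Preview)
The paper does not prove this lemma at all; it is stated in the appendix with a citation to \cite{NSU} and no argument is given. Your Picard-iteration proof is the standard one (essentially Henry's argument), and it is correct.

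Two minor points worth flagging. First, in your remainder estimate you invoke $\|f\|_{L^1(0,T)}$, but the hypothesis only gives local integrability on $[0,T)$; you should instead bound by $\|f\|_{L^1(0,t)}$ for the fixed $t<T$ at which you are working, which is finite and suffices. Second, as you yourself note, the constant you produce depends on $b$ and $T$ as well as on $\beta$, so the notation $C_\beta$ in the lemma is somewhat misleading; the sharp form of the bound involves the Mittag--Leffler function $E_\beta\big(b\Gamma(\beta)t^\beta\big)$. For the application in Lemma~\ref{rho1} this is harmless, since there one works on a fixed bounded time interval and only needs some finite constant.
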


{\bf Acknowledgements.}
Part of this work was done while B.P. and N.V. were visitors in Hong Kong Polytechnic University. They are grateful for the welcome of this institution.
NV. acknowledges partial funding from the ANR blanche project Kibord ANR-13-BS01-0004 funded by the French Ministry of Research. Z. Wang acknowledges an internal grant  No. 4-ZZHY of the Hong Kong Polytechnic University and Hong Kong RGC GRF grant  	PolyU 153031/17P.

\bibliography{Bibrefn}
\bibliographystyle{plain}
\end{document}